\newtheorem{conjecture}{Conjecture}
\definecolor{turquoise}{rgb}{0.00,0.40,0.50}    
\definecolor{violet}{rgb}{1.00,0.00,1.00}	
\definecolor{lightblue}{rgb}{0.00,0.00,0.60}	
\definecolor{midblue}{rgb}{0.20,0.30,0.80}	
\definecolor{orange}{rgb}{1.0,0.50,0.}	
\definecolor{gray}{rgb}{0.2,0.2,0.2}		
\definecolor{lightred}{rgb}{0.9,0,0}		
\definecolor{lightgreen}{rgb}{0,0.5,0,0}	
\definecolor{lightturq}{rgb}{0,0.2,0.3}		
\definecolor{lightviolet}{rgb}{0.3,0,0.3}	
\newcommand{\R}{\mathbb{R}}
\renewcommand{\P}{\mathbb{P}}
\newcommand{\B}{\mathcal{B}}
\newcommand{\Ee}{\mathbb{E}}
\newcommand{\eps}{\varepsilon}
\newcommand{\review}[1]{\textcolor{midblue}{ #1}}
\newcommand{\minor}[1]{\textcolor{orange}{ #1}}
\renewcommand{\review}[1]{{ #1}}
\renewcommand{\minor}[1]{{ #1}}
\title{Balanced Dynamics in Strongly Coupled Networks\thanks{Submitted to the editors \today.}}
\author{Cristobal Qui\~ninao\thanks{Facultad de Ciencias Biol\'ogicas, Pontificia Universidad Cat\'olica de Chile (\email{cquininao@uc.cl})}
\and Jonathan Touboul\thanks{Department of Mathematics and Volen Centre for Complex Systems, Brandeis University, Waltham MA 
  (\email{jtouboul@brandeis.edu}).}
   }
\crefname{hypothesis}{Hypothesis}{Hypotheses}
\newcommand*\linenomathpatch[1]{%
\cspreto{#1}{\linenomath}%
\cspreto{#1*}{\linenomath}%
\cspreto{end#1}{\endlinenomath}%
\cspreto{end#1*}{\endlinenomath}%
}
\newcommand*\linenomathpatchAMS[1]{%
\cspreto{#1}{\linenomathAMS}%
\cspreto{#1*}{\linenomathAMS}%
\csappto{end#1}{\endlinenomath}%
\csappto{end#1*}{\endlinenomath}%
}
\let\linenomathAMS\linenomathWithnumbers
\patchcmd\linenomathAMS{\advance\postdisplaypenalty\linenopenalty}{}{}{}
\let\linenomathAMS\linenomathNonumbers
\newcommand*{\addFileDependency}[1]{
  \typeout{(#1)}
  \@addtofilelist{#1}
  \IfFileExists{#1}{}{\typeout{No file #1.}}
}
\begin{document}

\maketitle

\begin{abstract}
\review{Many mathematical models of interacting agents assume that individual interactions scale down in proportion to the network size, ensuring that the combined input received from the network does not diverge. In theoretical neuroscience, Sompolinsky and Van Vreeswijk proposed in 1996 that, should these scalings be violated (and under appropriate conditions), the system may not diverge but rather approach a balanced state where the inputs to each neuron compensate each other (in neuroscience, where inhibitory currents compensate the excitatory ones). We come back to this observation and formulate here a mathematical conjecture for the occurrence of such behaviors in general stochastic systems of interacting agents. From a mathematical viewpoint, this conjecture can be viewed as a double-limit problem in the space of probability measures, which we discuss in detail, as it provides several possible mathematical avenues for proving this result. We provide some numerical and theoretical explorations of the conjecture in classical models of neuronal networks. Moreover, we provide a complete proof of an asymptotic result consistent with one of the double-limit problems in a one-dimensional model with separable coupling inspired by models of chemically-coupled neurons. This proof relies on asymptotic methods, and particularly desingularization techniques used in some PDEs, that we apply here to the mean-field limit of the network as the coupling is made to diverge. From the applications viewpoint, this theory provides an alternative, minimalistic explanation for the widely observed balance of excitation and inhibition in the cerebral cortex not requiring the assumption of the existence of complex regulatory mechanisms. }
\end{abstract}

\begin{keywords}
Interacting particle systems, balance of excitation/inhibition, asymptotic methods
\end{keywords}
\begin{AMS}
60F05, 37N25, 34K60, 45K05 
\end{AMS}
\section{Introduction}
\review{A number of classical models in physics and biology rely on the description of macroscopic behaviors emerging from the coordinated activity of a large number of interacting agents, often marked by randomness. Boltzmann's kinetic theory of thermodynamics, which relates the stochastic movements of each atom composing the gas to thermodynamic quantities such as temperature and pressure, constitutes an emblematic example~\cite{boltzmann}. More modern applications include, for instance, models of the stock market in finance, where the decisions of individual investors, whose nature and timing are marked by randomness, result in the emergence of specific stock prices and volatility~\cite{feng2012linking}. \minor{In theoretical ecology, this approach was used, for instance, to describe the collective behaviors emerging from the interaction of individual animals or plants~\cite{berec2002techniques,patterson2020probabilistic}. In these systems, randomness arises in the occurrence and timing of various ecological events that control population sizes, including birth, death, encounters between prey and predators, precipitation, and fires. These phenomena combine to reveal the emergence of specific ecosystems, and some of the core modern theoretical questions studied are the robustness of ecosystems, critical transitions and early warning signals~\cite{hagstrom2023phase,xu2023non,gandhi1998critical}. In neuroscience, these questions arise for the characterization of macroscopic patterns of cortical activity signals emerging from the stochastic dynamics of interacting neurons in brain~\cite{touboul2011noise,renart2004mean,baladron2012mean,deco2008dynamic,sterratt2023principles,fusi2007limits,yu2022metastable,baccelli2021pair}}. From the theoretical viewpoint, the question of finding macroscopic, effective equations to describe large-scale statistical systems has attracted a lot of interest from mathematicians and statistical physicists alike. A variety of techniques have been proposed for deriving macroscopic equations describing the collective behavior of a large system of interacting, stochastic agents. Most have in common the identification of averaging effects resulting from the combination of a large number of random contributions, a \emph{mean-field} of interaction, which is usually a deterministic term that describes the combined impact of the whole system on a given agent. For these approaches to be valid, all require the assumption that individual interactions scale with the network size. \minor{In detail, those techniques assume that the amplitude of pairwise} interactions becomes smaller as the network size increases. \minor{Under these assumptions, in the asymptotic regime where the network size diverges, each neuron interacts with the network through the superposition of a large number of small contributions, which converges (or is approximated)} through a classical limit theorem (law of large numbers, or central limit theorem). These assumptions are often justified by the necessity of avoiding divergence of the interaction term: in the absence of scaling, the interaction term will be on the order of the network size and would diverge in the large network limit. The most common choice for the scaling of the interaction term consists in considering that synaptic connections are inversely proportional to the network size, which results in more-or-less classical versions of McKean-Vlasov theory and limiting behaviors~\cite{touboul2011noise,baladron2011mean,quininao2014limits,touboul2012limits,mischler2015kinetic,robert2014dynamics,touboul2014propagation,fournier2014toy,de2014hydrodynamic,galves2015modeling,carrillo2013classical}.  Another situation that has been widely studied is the case of centered interactions scaled as the inverse of the square root of the network size, where a central limit theorem would lead to the emergence of a complex, non-Markovian equation with a random field of interaction~\cite{sompolinsky1988chaos,arous1995large,arous1997symmetric,cabana1,cabana2}. More recently, instead of rescaling coefficients, Ramanan and collaborators proposed to rescale the probability that agents are connected; in these systems, while interaction coefficients are not small, the combined input remains finite because the number of input sources is finite~\cite{ramanan2023interacting,ramanan2023large,ganguly2024hydrodynamic,lacker2023local,ramanan2022beyond}, and there again, mean-field behaviors emerge.  }

About 30 years ago, a celebrated and pioneering paper by Haim Sompolinsky and Carl van Vreeswijk~\cite{sompolinsky-van-vreeswijk} \minor{suggested that, in a system where the scaling of the interaction term is insufficient to ensure its asymptotic boundedness, the system may adjust itself to compensate for the divergence. In detail, Sompolinsky and van Vreeswijk's paper investigated the dynamics of a neural network system composed of two neural populations: excitatory cells, which, when they activate, tend to activate other cells, and inhibitory neurons that, when they activate, tend to decrease the activity of the cells they are connected to. In such a system, Sompolinsky and van Vreeswijk suggested that, in the absence of mean-field rescaling, the network activity would quickly evolve to reach a state where large excitatory inputs to any given neuron would be perfectly counterbalanced by an equally large inhibitory input. This regime where excitation and inhibition balance each other is referred to as the \emph{balanced regime}. Its properties are not well described by the type of mean-field dynamics discussed above}.  \review{The importance of this observation of the convergence of networks to a balanced regime is hard to overstate. \minor{First, instead of mean, deterministic or stochastic behaviors, the networks introduced in~\cite{sompolinsky-van-vreeswijk} typically display chaotic activity, which was found comparable to some irregular brain activity patterns observed experimentally in various studies~\cite{Britten:93, Shadlen:98, Compte:03}. From the functional point of view, multiple studies found that the type of chaotic activity emerging from balanced networks supports rich coding capabilities~\cite{sussillo:09, monteforte:12,london:10,kadmon2020predictive,deneve2016efficient}. Moreover, the study of the dynamics of balanced networks opened} a new area of research with deep, far-reaching ramifications, from statistical physics to stochastic processes and measure theory, to information theory and computer science}~\cite{cabana1,cabana2,jahnke2009chaotic,kadmon2020predictive,terada2024chaotic,wainrib2013topological}. In sharp contrast with this abundance of works exploring the complex behaviors of chaotic neuronal dynamics regimes emerging and their computational capabilities, the remark made \emph{en passant} in that article regarding to the rapid convergence towards balanced states has remained largely underexplored. 

\minor{From the applications viewpoint, identifying which scaling is more consistent with the systems considered would be essential to assess the relevance of the Sompolinsky and van Vreeswijk regime. Typically, these scalings are introduced formally, and experimental studies estimating the amplitude of the interactions as a function of the number of interacting agents are scarce. A notable exception is the work of Barral and Reyes~\cite{barral2016synaptic} that directly estimated in neural networks the relationship between the strength of interaction and the number of inputs received by neurons. This experimental \emph{tour de force} was made possible thanks to their ability to observe dissociated cells generate networks in vitro. In their experimental preparation, they observed the spontaneous development of axons and dendrites from initially dissociated cells, spontaneously yielding a network. From these preparations, the authors were able to measure, for each neuron, the number $n$ of pre-synaptic neurons (that is, the number of neurons sending an input to that particular neuron), as well as the strength of each connection $J$. Strikingly, they found that $J$ was not scaling as $1/n$, as most mean-field models assume, but instead as $1/\sqrt{n}$, consistently with the scaling considered by Sompolinsky and Van Vreeswijk and hypothesized to ensure balance of excitation and inhibition, and generate irregular activity (two properties also studied by Barral and Reyes in~\cite{barral2016synaptic}). This observation further highlights the importance of understanding the dynamics of statistical systems in this regime. }

\review{Mathematically, this is certainly an interesting problem to investigate. Indeed, in addition to possible averaging effects arising in more classical asymptotic problems in large-scale networks, one also needs to handle a possible rapid shift of the network activity to a balanced state. Mathematical results in this direction are scarce. In a somewhat related work, we showed that mean-field limits of neuronal networks with diverging coupling in the context of neural networks with electrical synapses lead to clamping of the solutions onto a Dirac mass where the interaction term vanished~\cite{quininao2020clamping}. While not described as such in that manuscript, this result provided a mathematical proof for a particular type of convergence to a balanced regime. However, the particular form of coupling used in that paper, diffusive coupling, did not differentiate between excitation and inhibition, and instead forced neurons to agree on a common voltage, leading to the total input effectively canceling out. In this sense, the network achieves a form of balance in the input each neuron receives. From a mathematical point of view, this particular choice of coupling was also instrumental in deriving analytical results, and generalizing the approach to different types of coupling, with excitatory and inhibitory neural populations, is a highly nontrivial task. In~\cite{quininao2020clamping}, the proof of convergence to a balanced regime focused on the analysis of a mean-field equation where the coupling coefficient was made to diverge. The sequence of probability measures was then studied using desingularization techniques (Hopf-Cole methods) under the assumption that initial conditions showed a Gaussian concentration to a Dirac mass. This blow-up method allowed showing that the sequence of measures collapsed to a Dirac mass, and that the blow-up profile was Gaussian. Following this work, an alternative approach was proposed, using Wasserstein distances, and used to lift the assumption on initial conditions~\cite{blaustein2023concentration} or achieve a stronger notion of convergence~\cite{blaustein2023large}, where interactions were extended to include a dependence in space. All methods used in these papers strongly rely on the particular choice of connectivity made, and all relied on the study of sequences of measures describing a mean-field limit of the system. Here, we go beyond the particular case and formulate this problem in mathematical terms by proposing that this observation is one facet of a general mathematical conjecture on asymptotic regimes of strongly connected networks.}

\review{The rest of the manuscript is organized as follows. Section~\ref{sec:math} introduces a general setting for the study, and formulates our conjecture. Section~\ref{sec:examples} provides two applications of the conjecture to neurosciences, and provides numerical simulations of these regimes in a few different settings. Section~\ref{sec:viewpoints} discusses different possible angles to approach the conjecture theoretically, by introducing it as a double-limit problem. This yields two distinct approaches. The first limit consists of considering what happens at very short timescales (typically, at timescales inversely proportional to the rate of divergence of the interaction term). We suggest that a non-stochastic dynamical system governs the dynamics at these timescales, with all variables fixed except those affected by the divergence of the interaction. From this perspective, the balanced regime may naturally appear as a dynamical equilibrium of the system on short timescales. The other limit involves first considering that the network size diverges (under appropriate scaling assumptions on the coupling coefficients) and then taking the interaction term to infinity, thereby prioritizing the accounting for collective effects before considering the emergence of balance. The latter framework is formalized in section~\ref{sec:theory}, which provides a complete proof supporting the result stated in the conjecture. The proof relies on functional analysis techniques and addresses a particular system given by a one-dimensional equation with separable coupling. The details of the proof are provided in the Appendix. We conclude by reviewing the main challenges and the relevance of this regime, particularly its applications to neurosciences. }

\section{Mathematical framework and conjecture}\label{sec:math}
In the original work of Sompolinsky and van Vreeswijk~\cite{sompolinsky-van-vreeswijk}, the authors considered a system composed of two populations: \review{excitatory neurons that, when they activate, tend to increase the activity of other neurons, and inhibitory neurons that, when they activate, reduce the activity of other cells. Each of these populations consists of a large number of neurons, and these neurons interact through a random, sparse network. Making the problem slightly more general, we define here an abstract model of $N$ stochastic interacting agents, distributed into $P$ different populations labeled $\alpha\in\{1\cdots,P\}$. The type of interactions between neurons depends on the population to which they belong (these generalize the excitatory/inhibitory populations described in the original work of Sompolinsky and van Vreeswijk~\cite{sompolinsky-van-vreeswijk}). Each agent's activity is defined by a state variable, $(x^i_t)_{t\geq 0}$, which is a stochastic process taking values in $\R^d$. This multi-dimensional vector describes the dynamics of each agent, and generalizes the typical variables that describe a neuron's activity. Typically, in neuroscience, the quantities used to describe the state of a neuron are its firing rate or, in more biophysically realistic descriptions, a vector that includes the electrical membrane potential, ionic currents, and recovery variables. These systems are described by a general stochastic network equation of the type:}
\begin{equation}\label{eq:general_network}
dx^i_t = \left(f_{p_i}(x^i_t) + \sum_{j=1}^N J^N_{ij} \,b_{p_i,p_j}(x^i,x^j) \right)\,dt+ \sigma_{p_i} dW^i_t. 
\end{equation}
where $p_i$ denotes the population of agent $i$, $(x^i_t)_{t\geq 0} \in C(\R,\R^d)$ is the state of agent $i$ in the network, $f_p:\R^d\mapsto \R^d$ represents the intrinsic dynamics of agents in population $p$, and $b_{p,q}(x,y):\R^d\times \R^d\mapsto \R^d$ represents the impact of an agent in population $q$ and in state $y$ on an agent in population $p$ and state $x$. \review{The stochastic processes $(W^i_t)_{t\geq 0}$ are independent $K$-dimensional Brownian motions describing $K$ independent sources of noise, while the parameters $\sigma_{p} \in \R^{d\times K}$ encode the contributions to each noise source onto each dimension of the state variable. In these equations, $J_{ij}^N$ is an $N\times N$ matrix whose coefficients are the coupling intensities between neurons $i$ and $j$, whose scaling is assumed to depend on the network size.}

\review{We are interested here in the behavior of the system in the large $N$ limit. For specificity and simplicity of notation, we assume in the rest of the manuscript that the populations contain the same number of neurons, noted $n$ (so $N=Pn$)\footnote{Most statements remain true when assuming that populations contain distinct numbers of neurons, as long as the number of neurons in each population diverges. In that case, the largest population will be the relevant large parameter controlling the speed of convergence.}, and the coupling coefficients $J_{ij}^N$ (or their statistics, when they are random) only depend on the populations of neurons $i$ and $j$ and the network size $N$. The choice of the coupling coefficient and its scaling with the network size is crucial to determining the asymptotic behavior of the network.}

\review{When $J_{ij}^N=\frac 1 n g_{p,q}$ where $(p,q)$ are the population labels of neurons $(i,j)$ and under} appropriate assumptions on the regularity and decay of the functions $f_p$ and $b_{p,q}$, classical mean-field theory implies that, for all neuron $i$ in population $p\in \{1,\cdots,P\}$, the process $(x^i_t)_{t\geq 0}$ converges in law towards $(\bar{x}^p_t)_{t\geq 0}$ the solution of the system of non-Markovian McKean-Vlasov equations:
\begin{equation}\label{eq:nonlinear_process}
d\bar{x}_t^{p} = \left(f_p(\bar x_t^p) + \sum_{q=1}^P g^{p,q}\,\int_{y} b_{p,q}(\bar x_t^p,y)\mu_t^q(dy) \right)\,dt+ \sigma_p dW^p_t
\end{equation}
where $\mu_t^q$ is the implicitly defined probability distribution of $\bar{x}^q(t)$. 
\review{Another situation studied in the literature previously relates to randomly connected networks, where the $J_{ij}^{N}$ are iid, centered coefficients scaling as $1/\sqrt{n}$. In this situation, the network dynamics converge to a complex, non-Markovian equation, similar to the original system, but where the interaction term is replaced by a Gaussian process whose covariance depends on the solution of the equation. This result is evocative of a functional Central Limit Theorem, but the proof is vastly different and requires somewhat more sophisticated stochastic analysis tools, including, for instance, large-deviation theory~\cite{sompolinsky1988chaos,arous1995large,arous1997symmetric,cabana1,cabana2}.}

\review{In both of these cases, the sum of all coefficients is (almost surely) bounded, and therefore no divergence of the coupling term happens. These dynamics agree, heuristically, with Boltzmann's original molecular chaos hypothesis, also known as sto\ss zahlansatz (often referred to mathematically as the propagation of chaos property). In the kinetic theory of gases, the sto\ss zahlansatz posits that the velocities of colliding gas particles are uncorrelated, and independent of their positions. In the more general context of this paper, it consists in considering that the states of the agents are independent, identically distributed, and thus allow applying one of the classical limit theorems of probabilities (law of large numbers or central limit theorem) to provide accurate predictions of the asymptotic behavior of the system.}

\review{Here, we consider cases where the sum of the coefficients diverges. \minor{Beyond its mathematical interest, it is, as reviewed above, relevant in computational neuroscience, the context of Sompolinsky and van Vreeswijk's claim in~\cite{sompolinsky-van-vreeswijk}, and to account for the coupling regime identified by Barral and Reyes in~\cite{barral2016synaptic}.}} In that case, Boltzmann's stochastic number is not effective in providing predictions for the system's dynamics in the large $n$ limit, since, under this assumption, the interaction term has a positive probability of diverging. To simplify our presentation further, let us assume that the divergence rate is uniform for any pair of populations, denoted $\gamma(n)$ for some function $\gamma$ that tends to infinity as $n\to \infty$. We note $J_{ij}^N=\gamma(n)\frac{g_{p,q}}{n}$, so the equation for neuron $i$ in population now reads:
\begin{equation}\label{eq:gamma_network}
dx^i_t = \left(f_{p}(x^i_t)+\frac{\gamma(n)}{n} \sum_{q=1}^P  g_{p,q} \sum_{j;p_j=q} \,b_{p,q}(x^i,x^j) \right)\,dt+ \sigma_{p} dW^i_t. 
\end{equation}
Building upon Sompolinsky and Van Vreeswijk's intuition, we propose that instead of a convergence of the system to an unconstrained equation, the divergence of the interaction term will govern the behavior of the system at short timescales, and will lead to two possible behaviors: the dynamics may run away and variables may quickly increase or decrease to reach large positive or negative values, or, if a balanced regime exists and is attractive, the system will quickly collapse onto the set of balanced dynamics. This balanced regime is defined as the set of voltage distributions that correspond to a zero net input. In detail, we define the \emph{balanced regime} as the set of measures such that, for any point on the support 
\begin{equation}\label{eq:balance_manifold}
\B=\{(\mu^1,\cdots \mu^P)\in \mathcal{M}^1(\R^d)^P,\,\, \forall x_p \in Supp(\mu^p), \sum_{q=1}^P g_{p,q} \int_{\R^d} b_{p,q}(x_p,y)\mu^q({dy})=0\},
\end{equation}
 where the distribution of the random variable is such that the net input received is zero. This regime may be composed of a continuum of solutions on which the system may pursue its dynamics while remaining confined to the manifold.
 
\review{We note that, for any possible measure $\mu$ in the balanced regime, $\B$ is invariant under the differential equations:
\begin{equation}\label{eq:earlyBehaviors}
\frac{dx_p}{dt}=\sum_{q=1}^{P} g_{p,q}\int_{\R^d} b_{p,q}(x_p,y)\mu_q(\review{dy}),
\end{equation}
in the sense that if $(\mu_1,\cdots,\mu_P)\in\B$, then for any $x_p$ in the support of $\mu^p$, $x_p$ is a fixed point of~\eqref{eq:earlyBehaviors}. This dynamical system will be used to characterize the stability of the balanced regime. 
} 

In detail, we propose the following conjecture. 

\begin{conjecture}\label{conjecture}
\review{Under the current formalism, let $X^n=(x^n_{i_1},\cdots, x^{n}_{i_p})$ be a vector composed of one representative neuron in each of the $P$ populations. 
If the set $\B$ is not empty, then we conjecture that there exist asymptotic solutions that are confined to the balanced regime for all times. }

\review{Moreover, if the balance manifold $\B$ given by~\eqref{eq:balance_manifold} \review{is} \emph{attractive} for the dynamical system~\eqref{eq:earlyBehaviors}, in the sense that for initial conditions $x_p^0$ near the support of $\mu_p$, the solutions of equation~\eqref{eq:earlyBehaviors} converge to the support of $\mu_p$, then even for initial conditions outside of $\B$ the system will collapse onto $\B$ at a timescale inversely proportional to $\gamma(n)$ (that is,  for any $t>0$, $\P[X^n_t\in \B]\to 1$ as $n\to \infty$). }
\end{conjecture}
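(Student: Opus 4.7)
The natural strategy is a two-timescale analysis. Setting $\tau=\gamma(n)t$ in equation~\eqref{eq:gamma_network}, I obtain
\begin{equation*}
dx^i_\tau = \frac{1}{\gamma(n)}f_p(x^i_\tau)\,d\tau + \frac{1}{n}\sum_{q=1}^P g_{p,q}\sum_{j;p_j=q} b_{p,q}(x^i_\tau,x^j_\tau)\,d\tau + \frac{\sigma_p}{\sqrt{\gamma(n)}}\,d\tilde W^i_\tau,
\end{equation*}
so that, as $n\to\infty$, both the intrinsic drift and the noise vanish and the dominant dynamics coincide formally with the empirical-measure version of~\eqref{eq:earlyBehaviors}. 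The plan is then: (i) on the fast timescale, show that the empirical distributions converge to a deterministic measure-valued trajectory governed by the transport equation associated with~\eqref{eq:earlyBehaviors}; (ii) use attractivity of $\B$ to drive this trajectory onto $\B$ in finite fast time, which translates into collapse at a physical time of order $1/\gamma(n)$; (iii) once the system is near $\B$, show that the residual interaction remains of order one, so that the slow drift $f_p$ and the noise $\sigma_p$ govern the within-$\B$ dynamics without ejecting the system from the manifold.

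For the first part of the conjecture, given initial data $(\mu^1_0,\ldots,\mu^P_0)\in\B$, I would set up a fixed-point problem on the space of measure-valued trajectories $t\mapsto(\mu^1_t,\ldots,\mu^P_t)$ that lie in $\B$ for all $t$. A candidate is the McKean-Vlasov system
\begin{equation*}
d\bar x^p_t = f_p(\bar x^p_t)\,dt + \sigma_p\,dW^p_t
\end{equation*}
whose marginals $\mu^p_t$ must satisfy the algebraic-integral constraint~\eqref{eq:balance_manifold} at every time. Existence would follow from tightness of the particle system (relying on the rescaling above for uniform-in-$n$ moment bounds in spite of the divergence of the raw interaction), a Prohorov extraction, and identification of any limit point through the balance identity. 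The principal difficulty here is that membership in $\B$ is defined by an infinite family of pointwise constraints on the support of each marginal, so the propagator and the constraint are coupled nonlinearly through both $f_p$ and the geometry of the evolving support.

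For the second part, I would couple the rescaled particle system to the deterministic measure-valued ODE associated with~\eqref{eq:earlyBehaviors}. Tightness of the rescaled empirical measures $\mu^{n,p}_\tau:=\frac{1}{n}\sum_{j;p_j=p}\delta_{x^j_{\tau/\gamma(n)}}$ in the Skorokhod space, and identification of limit points as solutions of the transport equation, should follow from a standard propagation-of-chaos argument applied to the dominant $O(1)$ interaction drift. By the attractivity hypothesis, the limiting flow carries any initial configuration within the basin of attraction into a prescribed neighborhood of $\B$ in finite fast time $\tau^\star$; hence at physical time $t=\tau^\star/\gamma(n)\to 0$ the law of $X^n_t$ is close to $\B$. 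A stability estimate for the balance constraint under the residual interaction, together with a Grönwall-type bound on the slow drift, would then propagate the property on any compact interval and yield $\P[X^n_t\in\B]\to 1$ for every $t>0$.

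The main obstacle I anticipate is handling the three asymptotic operations simultaneously: the mean-field limit $n\to\infty$, the fast-time concentration $\gamma(n)\to\infty$, and the degeneracy of noise on the fast scale. Propagation of chaos on the rescaled timescale must be carried out in a regime where the macroscopic intrinsic drift becomes singular upon slow-time transcription and the interaction term is only locally Lipschitz. A desingularization of the limiting law near $\B$, in the spirit of the Hopf-Cole rescaling used in~\cite{quininao2020clamping}, will likely be needed both to control the empirical profile during the fast collapse and to rule out pathological concentration outside of $\B$, especially for initial data started on the boundary of the basin of attraction.
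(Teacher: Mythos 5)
The statement you are attempting to prove is explicitly a \emph{conjecture}, not a theorem: the authors say that proving Conjecture~\ref{conjecture} ``seems out of reach of current methods,'' and the paper's rigorous content (Theorems~\ref{th:MR-existence} and~\ref{th:MR-convergence}) concerns only the restricted one-dimensional, one-population, separable-coupling case, proved by the opposite order of limits (Method~M1: first $n\to\infty$ to a McKean--Vlasov PDE, then $\eps=\gamma^{-1}\to 0$ via Hopf--Cole desingularization and viscosity solutions). Your plan instead follows Method~M2 (fast-time rescaling $\tau=\gamma(n)t$ first), which the paper develops only heuristically in Section~\ref{sec:HeuristicTime} and explicitly flags as the harder route. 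So the approaches are genuinely different, but yours is a program, not a proof, and it leaves the step the paper identifies as the obstruction exactly where the paper left it.

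Two concrete gaps. First, the attractivity hypothesis of the conjecture is stated for the \emph{frozen} flow~\eqref{eq:earlyBehaviors}, in which the reference measures $\mu_q$ in the drift are held fixed while a single test point relaxes; on the fast timescale, however, \emph{all} particles move simultaneously, so the empirical measure entering the drift evolves self-consistently and the limiting fast dynamics is a nonlinear, measure-dependent transport, not the frozen ODE. Convergence of the self-consistent flow to $\B$ does not follow from attractivity of the frozen flow without an additional contraction or Lyapunov argument that you do not supply; this is the heart of the matter. Second, even after collapse, the target set is moving: $\B$ as defined in~\eqref{eq:balance_manifold} couples the admissible supports to the current marginals, and those marginals continue to evolve under $f_p$ and the noise on the slow scale, dragging the balance locus with them. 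Your step~(iii) asks for a ``stability estimate for the balance constraint under the residual interaction'' plus Gr\"onwall, but this assumes exactly what must be shown, namely that the concentrated cloud remains slaved to a target that it is itself helping to define. This is precisely the ``intermediate times'' difficulty the paper singles out for Method~M2, and in the only case treated rigorously to date it required the dedicated Hopf--Cole/moment-closure machinery of~\cite{quininao2020clamping}, not a generic Gr\"onwall bound.
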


\review{While broader and more abstract, this conjecture, however, reflects precisely the balanced regime predicted by Sompolinsky and van Vreeswijk in~\cite{sompolinsky-van-vreeswijk}, since the set $\B$ is composed of distributions associated with regimes where the input received by each neuron from the rest of the network combines to a zero net input.} Such regimes do not always exist. For instance, in a one-dimensional system and in the case where $b(x,y)$ is of constant sign, no measure can ensure balance. However, if there exists a solution $(x_1,\cdots,x_P)\in (\R^d)^P$ to the system of equations:
\[\sum_{q=1}^{P} g_{p,q}  b_{p,q}(x_p,x_q) \review{= 0}, \qquad p\in \{1\cdots P\},\]
then the Dirac mass at this point belongs to the balanced regime.

\review{Before we proceed to exploring the mathematical underpinnings of this conjecture, we first examine its predictions for various neural network models. }

\section{Asymptotic behaviors of electrically or chemically coupled networks of FitzHugh-Nagumo neurons}\label{sec:examples}
\review{The FitzHugh-Nagumo model is a phenomenological mathematical neuron model introduced in the middle of the past century~\cite{fitzhugh1955mathematical,nagumo1962active}. It is particularly popular in computational neuroscience for its relative simplicity and its ability to capture critical characteristic features of neuronal dynamics, including, in particular, neural excitability~\cite{cebrian2024six}. The FitzHugh-Nagumo model describes the coupled dynamics of a voltage variable $x$ and a recovery (or adaptation) variable $y$, subject to the differential equation:
\[
\begin{cases}
\dot{x}=f(x)-y+I\\
\dot{y}=a(b\,x-y)\\
\end{cases}
\]
where $f$ is a cubic polynomial with negative leading coefficient (typically, $f(v)=v(1-v)(v-\lambda)$), $I$ represents the input to the neuron, $a$ is the (typically small) timescale ratio between the recovery variable and the voltage, and $b$ is a positive parameter controlling the response of the recovery variable (its `adaptation') to changes in the voltage. The network models described below build upon the FitzHugh-Nagumo equation and explore the dynamics of large networks with two choices of coupling.}

\subsection{Electrically coupled FitzHugh-Nagumo equation}\label{sec:ElectricalExample}
\review{As a first example, we consider a network composed of a single population ($P=1$) of electrically coupled FitzHugh-Nagumo neurons~\cite{quininao2020clamping,mischler2015kinetic}:}
\[\begin{cases}
dx^i_t=\Big(f(x^i_t)-y^i_t+\frac{\gamma(n)}{n}\,\minor{g}\, \sum_{j=1}^n (x^j-x^i)\Big)\,dt + \sigma dW^i_t\\
dy^i_t=a(bx^i_t-y^i_t)\review{\,dt}.
\end{cases}\]
\review{where $(x^i,y^i)$ are the voltage and recovery variables of neuron $i$, $g>0$ is a constant coupling strength and the processes $(W^i_t)_{t\geq 0}$ are independent, scalar Brownian motions}. This model fits in the main setting of the problem with $d=2$, $P=1$, and $b(x,y)=y-x$. \review{It thus provides an elementary example to break down the conjecture. In this model:}
\begin{itemize}
\item \review{the \emph{balance manifold} is composed of all probability measures $\mu$ on $\R^2$ such that, for any $(x,y)$ in their support, 
\[\int_{\R^2} (x-x')\mu(\review{dx',dy'})=x-\int_{\R^2} x'\mu(\review{dx',dy'})=0.\] 
In other words, a measure in the balance manifold will have all of the voltage variables in its support coincide (and equal to the average voltage of the distribution). The balance condition provides no other constraints on the distribution of adaptation variables. The balance manifold is thus given by:
\[\B=\{\mu(\review{dx,dy})=\delta_{x^\star}(dx)\times \nu(dy), \text{for } x^{\star} \in \R \text{ and } \nu\in \mathcal{M}^1(\R)\} \]
where $ \mathcal{M}^1(\R)$ are the probability distributions on $\R$. }
\item \review{Let us now consider $\mu=\delta_{x^\star}(dx)\otimes \nu(dy)\in \B$ an arbitrary measure on the balance manifold and let us characterize its stability. } Equation~\eqref{eq:earlyBehaviors}, in this context, reduces to:
\begin{equation}
\begin{cases}
\frac{dx}{dt} = g \int_{\R^2} (x'-x)\,\delta_{x^\star}(dx')\otimes \nu(dy')= x^\star-x,\\
\frac{dy}{dt} =0
\end{cases}
\end{equation}
which is a simple linear equation that, for each initial condition, converges to $x^\star$, which is precisely the support of the distribution. Therefore, it is stable in the sense of the conjecture, since the solution converges toward the support of the distribution. 
\end{itemize}
\review{These results are comparable to the findings of~\cite{quininao2020clamping}, where the phenomenon of mass concentration at Dirac distributions was referred to as clamping and synchronization, depending on whether the slower evolution of the system on the balance manifold was stationary or dynamic. }

Figure~\ref{fig:FhNElectrical} illustrates this convergence for a particular set of parameters and for $\gamma(n)=n$ or $\sqrt{n}$. In both cases, we observe a quick convergence to a balanced regime illustrated by the rapid convergence of all voltages to a common value (Figure~\ref{fig:FhNElectrical} A,A' and insert) associated with a quick convergence of the distribution of voltages to a Dirac mass (Figure~\ref{fig:FhNElectrical}C,C') and a rapid collapse of the standard deviation of the voltage variables, \review{thereby illustrating the convergence result stated in the conjecture. Moreover, these simulations illustrate that while both choices of scaling converge to the same dynamics, when the divergence of the coupling term is slower ($\gamma(n)=\sqrt{n}$), the convergence is also slower (in time) and less dramatic (larger standard deviation). This is why we chose a relatively small network size, $N=300$, since for larger values of $n$, both the speed of convergence and the steady standard deviation decrease further, for both scalings, making differences more subtle to identify.} 

\review{Of note, after this rapid convergence phase, the system, quickly confined on balanced regimes, starts wandering on the manifold, either converging to a fixed voltage value or displaying non-trivial dynamics such as the relaxation cycles shown below. The determination of these eventual behaviors is beyond the scope of the conjecture, which only discusses the existence and stability of solutions confined to the balance manifold, but not their dynamics. To date, there is no systematic approach for uncovering these dynamics on the balance manifold. In~\cite {quininao2020clamping}, moment approximations under the assumption of rapid convergence were developed to infer the dynamics of the clamping voltage $x^\star$ and its associated adaptation variable. }

\begin{figure}[h]
\includegraphics[width=\textwidth]{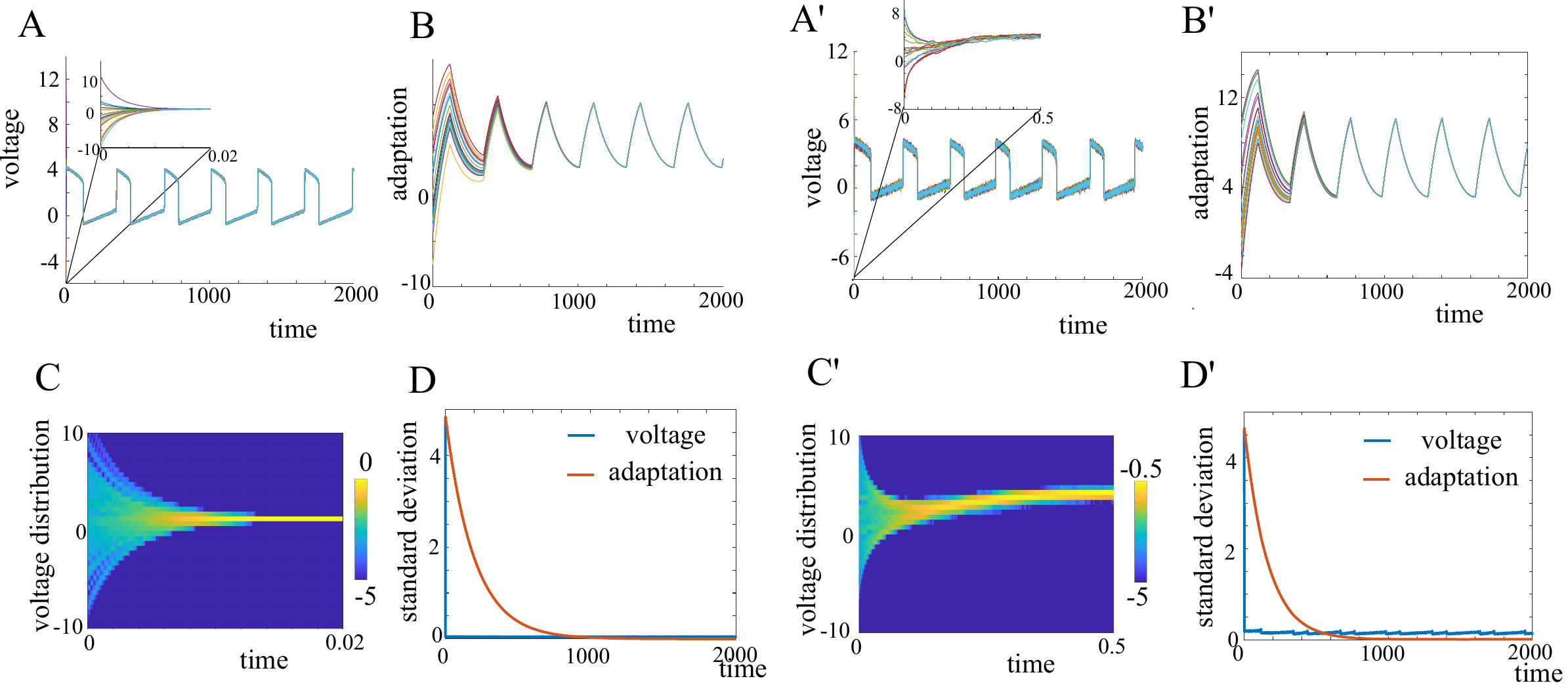}
\caption{\review{Dynamics of the electrically coupled FitzHugh Nagumo with $n=300$ neurons with $\gamma(n)=n$ (A,B,C,D) or $\gamma(n)=\sqrt{n}$ (A',B',C',D'). (A,A') voltage variable for 20 neurons in the network (with inset showing the rapid convergence to balance), (B,B') associated recovery variable, (C,C') represent the distribution of the voltage (in logarithmic scale) at small timescales and (D,D') the standard deviation of both variables as a function of time.} Quick convergence to the balanced regime is visible in the collapse of the distribution and of the standard deviation in the voltage variable. Simulations were performed using a custom Euler-Maruyama code on Matlab. $f(v)=v(1-v)(v-4)+4$,  $a=0.005$, $b= 6$, $g=1$ and $\sigma=1$, and independent Gaussian initial condition with standard deviation equal to $5$, mean initial voltage $1$ and mean initial recovery variable $1.5$.}
\label{fig:FhNElectrical}
\end{figure}

\subsection{Example 2: FitzHugh-Nagumo model with chemical synapses and excitatory and inhibitory populations}\label{sec:Chemical}
\review{In this section, we consider the dynamics of a two-population neural network with $n$ excitatory and $n$ inhibitory neurons (thus, $N=2n$), with interactions modeled as chemical synapses. This type of synapse, the most ubiquitous in the cerebral cortex, mediates the communication between cells through the release of vesicles filled with neurotransmitter molecules that bind to specific receptors on the membrane of the post-synaptic cell. This binding modifies the neuron's conductances, leading to changes in ion exchange across the membrane and generating a current that alters the cell's voltage. These vesicles are released when the neuron fires an action potential (or spike), which corresponds to a substantial increase in the cell membrane's voltage. The subsequent modification of the membrane's conductance features a rapid rise and a slower decay, which was modeled in~\cite{destexhe1998kinetic} as a first-order differential equation nonlinearly coupled to the voltage (through a sigmoidal function of the voltage $\alpha(x)$) to reflect the spike-induced change in conductances. This change in conductance in turn affects the voltage by generating an ion transfer proportional to the difference between the membrane potential and a constant voltage, called the reversal potential, and characteristic of the excitatory ($E_E$) or inhibitory ($E_I$) synapse:}
		\begin{equation}\label{eq:EIFhN}
		\begin{cases}
			dx^i_t&=\displaystyle{\Big(f(x^i_t)-y^i_t+\frac{\gamma(n)}{n}\sum_{j=1}^{N}g_{p(j)p(i)} (x^i_t-E_{p(j)}) s^{j}_t\Big)\,dt +\sigma dW^i_t}\\
			dy^i_t&=a\Big(bx^i_t-y^i_t+c\Big)\,dt\\
			ds^{i}_t&=\Big(-\frac{s^{i}_t}{\tau}+\alpha(x^i_t)(1-s^{i}_t)\Big)\,dt
		\end{cases}
	\end{equation}
\review{and $s^i$ are the synaptic conductances associated with neuron $i$\footnote{We assumed here, for mathematical simplicity, that the dynamics of excitatory and inhibitory cells are identical. Similarly, more biologically realistic developments should differentiate between timescales and dynamical parameters of each population. Applying conjecture~\ref{conjecture} to this model yields the following predictions:}}
 \begin{itemize}
 \item The balance regime is defined by the set of measures $(\mu_E,\mu_I)$ on $\R^3$ such that any voltage variable $x_E,x_I$ in the support of the distributions,
	\[\begin{cases}
	g_{E E} (x_E-E_{E})\bar{s}_E + g_{I E} (x_E-E_{I})\bar{s}_I=0\\
	g_{E I} (x_I-E_{E})\bar{s}_E + g_{I I} (x_I-E_{I})\bar{s}_I=0
	\end{cases}\]
	where $\bar{s}_E=\int_{\R^3} s \mu_E(dx,dy,ds)$ and $\bar{s}_I= \int_{\R^3} s\mu_I(dx,dy,ds)$. These equations define unique voltages (provided that the denominators do not vanish):
	\begin{equation}\label{eq:2popsBalanceVoltage}
	\begin{cases}
	\displaystyle x_E= x_E^\star:=\frac{g_{EE}\, E_E \, \bar{s}_E\,+\,g_{IE}\,E_I \, \bar{s}_I}{g_{EE} \, \bar{s}_E+g_{IE}\, \bar{s}_I}\\[10pt]
	\displaystyle x_I= x_I^\star:=\frac{g_{EI} \, E_E \, \bar{s}_E+g_{II} \, E_I \, \bar{s}_I}{g_{EI} \, \bar{s}_E+g_{II} \,\bar{s}_I},
	\end{cases}
	\end{equation}
	\review{and impose no constraints on other variables. The balanced regime is thus given by the set of measures:
	\begin{multline}
	\B=\Big\{\mu=\Big (\delta_{x_E^\star(\nu_E,\nu_I)} (dx)\; \otimes \; \nu_E(dy,ds)\; ,\; \delta_{x_I^\star(\nu_E,\nu_I)}(dx) \; \otimes \; \nu_I(dy,ds)\Big), \\
	\quad \nu_E,\nu_I\in \mathcal{M}^1_+(\R^2)\Big\}
	\end{multline}
	where $(x_E^\star,x_I^\star)$ are coupled to $\nu_E$ and $\nu_I$, given by equations~\eqref{eq:2popsBalanceVoltage} with $\bar{s}_p=\int_{\R^2} s \; \nu_p(dy,ds)$ for $p\in \{E,I\}$. }
	
	\item \review{In terms of stability, the conjecture predicts that a particular measure $\mu=(\delta_{x_E^\star}(dx)\otimes \nu_E(dy,ds),\delta_{x_I^\star}(dx)\otimes \nu_I(dy,ds))\in \B$ will be stable if, for any initial condition in the vicinity of the support of the measure converges to the support under the deterministic dynamical system~\eqref{eq:earlyBehaviors}. Here, the support of $\mu$ for the voltage variables are the singletons $\{x_E\}$ and $\{x_I\}$, and equation~\eqref{eq:earlyBehaviors} are given by:
	\begin{equation}
	\begin{cases}
		\dot{x}_E= (g_{EE} \bar{s}_E+g_{IE} \bar{s}_I)x_E - (g_{EE}E_E\bar{s}_E + g_{IE}E_I\bar{s}_I),\\
		\dot{x}_I = (g_{EI} \bar{s}_E+g_{II} \bar{s}_I)x_I - (g_{EI}E_E\bar{s}_E + g_{II}E_I\bar{s}_I),
		\end{cases}
	\end{equation}
	where the values of $y_E, \bar{s}_E, y_I, \bar{s}_I$ are constant and $\bar{s}_{E,I}$ are the expectations of the variable $s$ for the E/I population. The balanced measure will thus be stable when $G_{EE} \bar{s}_E+G_{IE} \bar{s}_I<0$ and $G_{EI} \bar{s}_E+G_{II} \bar{s}_I<0$. Because the excitatory weights are positive and the inhibitory weights negative, and the synaptic variables $\bar{s}_{E,I}>0$, this indicates that the balance manifold will be stable in \emph{inhibition-dominated} regimes, a condition for stability that has been emphasized many times, in other contexts, in neuroscience~\cite{tartaglia2017bistability,sadeh2021inhibitory,ahmadian2021dynamical}. }
\end{itemize}
Figure~\ref{fig:FhNChemical} shows simulations of the above system for particular choices of parameters. In Figure~\ref{fig:FhNChemical}(A), we represent a case where the inhibitory synaptic coefficients dominate, so that the stability condition is more easily satisfied. Simulations showed a rapid collapse of the voltages on their respective balance voltages $(x_E^{\star},x_I^{\star})$ where they stabilized. To illustrate the dynamic nature of the balance regime, we simulated a sudden change in synaptic conductances (greyed region). We observed that the network quickly adjusts to the new balance voltages, confirming that the balanced regime is stable to perturbations and changes in, for example, electrophysiological conditions or stimulation. Figure~\ref{fig:FhNChemical}(B) instead shows a case where excitatory conductances are larger than inhibitory conductances. In that case, we observe that the system does not converge to the balanced voltage; neurons' voltages quickly escape the vicinity of the balanced regime, clustering into two groups that reach either high voltages or low voltages, thereby deviating from the balanced regime. The resulting current generated by the final double-Dirac measure obtained was found to be fairly large, and increased with $n$. The stabilization, occurring at finite $n$ thanks to the presence of the cubic nonlinearity, is not expected to persist in the limit $n\to\infty$. 

\begin{figure}[h]
\centerline{\includegraphics[width=.8\textwidth]{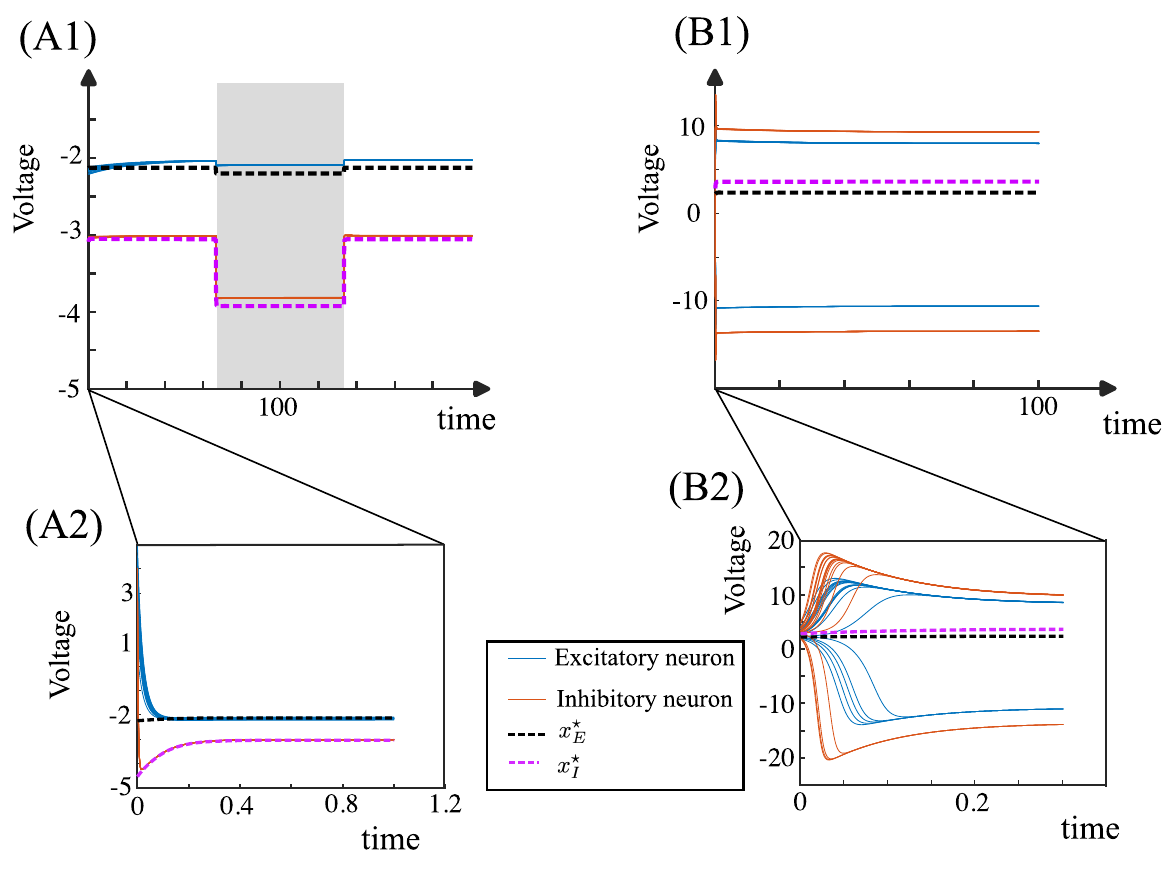}}
\caption{Dynamics of the chemically coupled Fitzhugh Nagumo model~\eqref{eq:EIFhN}. 
\review{(A1-2) Inhibition-dominated case with a stable balanced state ($g_{EE}=0.3$, $g_{EI}=2$, $g_{IE}=1$ and $g_{II}=10$), and (B1-2) Excitation-dominated regimes, where we see no convergence to a balanced state ($g_{EE}=1$, $g_{EI}=2$, $g_{IE}=0.1$ and $g_{II}=0.7$). (A2) and (B2) represent the dynamics of the system at short timescales for 20 excitatory (blue) and 20 inhibitory (red) neurons, together with the predicted value for the balanced excitatory (dashed black) and inhibitory (dashed purple) voltage. A clear, rapid convergence to the balanced state occurs in (A2), whereas in (B2), the voltages of neurons in each population escape the balanced regimes. Instead, they cluster into two groups: those converging to a common value above the balanced voltage and the rest converging to a common value below the balanced voltage. At longer timescales, those regimes remain stable, as shown in (A1) and (B1). (A1, greyed region) solutions quickly adjust to a 50\% increase in excitatory conductances. Simulations were performed using a custom Euler-Maruyama code on Matlab, with independent, Gaussian initial conditions $\mathcal N(3,1)$ for the voltage and $\mathcal N(2,1)$ for the adaptation variable. Conductances uniformly distributed in $[0,2]$ (excitatory) or $[0,3]$ (inhibitory). Other parameters: $\gamma(N)=N/10$, with $f(x)=x(1-x)(x-0.3)$,  $a=0.4$, $b=1.5$, $c=1$ and $\sigma=1$. }}
\label{fig:FhNChemical}
\end{figure}

\section{A few mathematical viewpoints on the problem}\label{sec:viewpoints}
\review{There are many ways to approach the conjecture formulated in this paper mathematically. Here, we discuss a possible approach that relies on reformulating the conjecture as a double-limit problem. In detail, the system considered here is a particular asymptotic regime of the equation:
\begin{equation}\label{eq:DoubleLim}
dx^i_t = \left(f_{p_i}(x^i_t)+\gamma\sum_{p=1}^{P} \sum_{j=1}^n \frac{{g}_{p_i,p}}{n} \,b_{p_i,p}(x^i,x^j) \right)\,dt+ \sigma_{p_i} dW^i_t. 
\end{equation}
where $n$ and $\gamma$ simultaneously go to infinity. Decoupling the divergence of $\gamma$ and $n$ in that fashion is not a mere artificial reformulation of the system: it opens the way to treat the conjecture as a double-limit problem (see diagram in Figure~\ref{fig:doubleLimit}). It heuristically sheds light on the phenomena that combine into the conjectured convergence to a balanced regime. In this diagram, there are at least two ways to approach the limit at hand: a first method (Method M1) consists in taking first the limit $n\to\infty$ with $\gamma$ fixed (going left to right in the diagram of Figure~\ref{fig:doubleLimit}, or step M1.a), and then taking the limit $\gamma\to \infty$ (M1.b). The second consists of taking the limit $\gamma\to\infty$ first, and then $n\to\infty$ (Method M2). We discuss what each of these limits will tell us about the system, as well as possible promising mathematical approaches to handle these questions.}
\smallskip

\begin{figure}[h]
\centerline{\includegraphics[width=.6\textwidth]{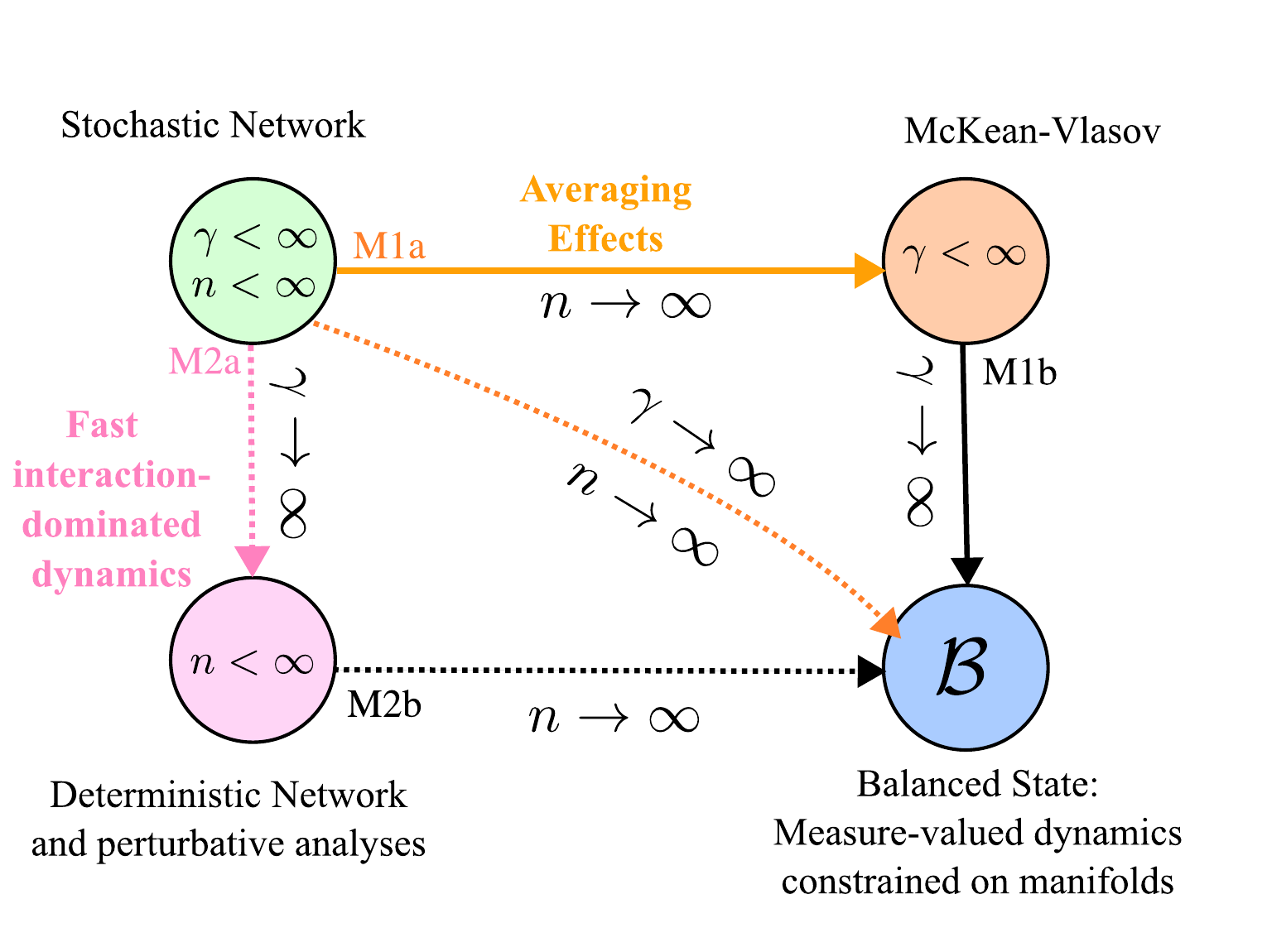}}
\caption{Double-limit perspective on the balance conjecture. The original setup of the problem can be viewed as a specific path within the double-limit problem, where $n \ to \ infty$ and $\ gamma \ to \ infty$ separately, as shown in equation~\eqref{eq:DoubleLim}. The limit $n\to \infty$ with $\gamma$ fixed (step M1a) typically yields, under appropriate assumptions, McKean-Vlasov mean-field equations that reflect averaging effects (orange circle). The limit $\gamma\to\infty$ with $n$ fixed rather corresponds to a singular value problem, whereby a change of time shall provide us with a system of ODEs informing us about early behaviors (arising, when $\gamma$ is large but finite, at a timescale of $\gamma^{-1}$, pink circle and step M2a). The problem discussed in conjecture~\ref{conjecture} is represented by the blue circle, and can be seen as a slanted path (orange arrow) in this double-limit graph, which we conjecture will collapse onto the balanced manifold $\mathcal{B}$. The rigorous results demonstrated in the paper follow the solid arrows (and hold under additional assumptions). }
\label{fig:doubleLimit}
\end{figure}

\begin{enumerate}[(i)]\itemsep=2pt
\item[M1.a] \review{Taking the limit $n\to \infty$ first puts the emphasis on averaging effects first, and then explores how collective dynamics adjust to diverging coupling. The first step in this approach, which involves taking the $n\to\infty$ limit in a set of interacting stochastic agents, is a classical question in stochastic analysis that has been widely studied over the past few decades and is relatively well understood. Under mild conditions on the system's parameters, the empirical measure of the collection of neurons in a given population, as well as the probability distribution of the state of each agent, converges to McKean-Vlasov processes, described by stochastic differential equations whose coefficients depend on the distribution of the solution itself. This thus yields a family of McKean-Vlasov processes indexed by $\gamma$. 
\item [M1.b] The second step in Method M1 consists of characterizing the adherent points and convergence properties of the family of processes as $\gamma$ diverges. Here again, a few methods are likely to apply, either working in the space of measures endowed with appropriate distances (typically, the Wasserstein space~\cite{blaustein2023concentration,blaustein2023large,blaustein2024concentration}). An alternative method involves writing the sequence of probability densities of the system, typically described by McKean-Vlasov Fokker-Planck equations, which are partial integro-differential equations indexed by $\gamma$, and studying adherent points and convergence of the sequence of functions as $\gamma\to\infty$ in an appropriate functional space. }
\item[M2.a]  \review{The second view on the problem consists in taking the limit $\gamma\to\infty$ first, emphasizing balance phenomena before considering how large-scale interactions maintain or alter these dynamics. Here, we expect that a change of time, proportional to $\gamma^{-1}$ will desingularize the network equation. It will make each term of the interactions of order $1/n$ (and the whole interaction of order 1) instead of being proportional to $\gamma$. It will make the intrinsic dynamic term and the stochastic forcing small. On this system, we expect the theory of regular perturbations of stochastic differential equations to yield convergence to a deterministic network equation, given by an ordinary differential equation, which emphasizes the emergence of balance. This approach is illustrated in section~\ref{sec:HeuristicTime} in the case of the two-population FitzHugh-Nagumo network with chemical synapses. 
\item[M2.b] Asymptotic regimes of these network equations as $n\to \infty$ and convergence properties {now} need to be identified. Asymptotic limits of deterministic systems are also well developed and should be available for the system.}
\end{enumerate}
\review{ Of note, one difficulty in method M2 consists in understanding what happens at intermediate times. Indeed, after taking the limit $\gamma\to\infty$, the description obtained after step M2a is valid on an infinitesimal time interval, and the differential equations obtained no longer depend on variables or dynamical terms that were not part of the diverging coupling. As soon as the balance has been ensured in these systems, the subsequent dynamics are no longer described by the same equations and will need a specific treatment. In that view, this double-limit is not a classical problem of commutation of limits, but rather two complementary views on the stochastic equation, where the limit $\gamma\to \infty$ informs us of the rapid collapses onto a balanced manifold, when it happens, yielding the system to be confined in the vicinity of the balanced regime where one can then explore the dynamics and averaging effects under the balance constraints.}

\review{The two subsections below discuss, in practice and formally, what both of the techniques outlined above entail. Due to the difficulties outlined regarding the dynamics at intermediate times in method M2, we focused our attention on method M1 in this paper. The asymptotic results we obtain for the sequence of McKean-Vlasov processes rely on functional analysis, more precisely on a desingularization technique that allows us to define viscosity solutions for the sequence of McKean-Vlasov processes, to prove that the sequence of probability densities associated are relatively compact, thus converging along subsequences, and to characterize some properties of the adherent points of the sequence (see section~\ref{sec:theory} and in the Appendices).}

\subsection{Method M1: Viscosity solutions of a family of McKean-Vlasov processes}\label{sec:methodHeuristic}
\review{Method M1 in the double-limit problem consists in first taking $n\to \infty$, before $\gamma\to \infty$. In that case, under appropriate assumptions on the functional parameters, equation~\eqref{eq:general_network} becomes a system of McKean-Vlasov equation~\eqref{eq:nonlinear_process}:
\begin{equation}
d\bar{x}_t^{p} = \left(f_p(\bar x_t^p)+\gamma \sum_{q=1}^P g_{p,q}\,\int_{y} b_{p,q}(\bar x_t^p,y)\mu_t^q(dy) \right)\,dt+ \sigma_p dW^p_t.
\end{equation}
This equation defines a solution dependent on the coupling parameter $\gamma$. The problem now amounts to studying this collection of processes and its asymptotic regimes when $\gamma\to\infty$. Considering limits of sequences of McKean-Vlasov processes is quite uncharted territory, and behaviors arising in the limits of those processes may be complex and associated with the emergence of singular distributions and Dirac masses. A technique we have been developing in~\cite{quininao2020clamping} to desingularize the behaviors has consisted of considering the McKean-Vlasov Fokker-Planck equation on the density of the process. For simplicity, we outline the methodology in the case of the FitzHugh-Nagumo equations with chemical synapses introduced above in equation~\eqref{eq:EIFhN}. This method will be rigorously applied to a one-dimensional system in Section~\ref{sec:theory}. }

\review{Under appropriate assumptions, in the limit $n\to \infty$ with fixed $\gamma$, neurons in population $p$ shall converge to independent copies of the McKean-Vlasov process given by the system of implicit stochastic equations:
\begin{equation}\label{eq:EIFhNMcKeanVlasov}
		\begin{cases}
			d\bar{x}^p_t&=\displaystyle{\Big(f(\bar{x}^p_t)-\bar{y}^p_t+\gamma g_{Ep} (\bar{x}^p_t-E_{E}) \mathbb{E}[\bar{s}^E_t]+\gamma g_{Ip} (\bar{x}^p_t-E_{I}) \mathbb{E}[\bar{s}^I_t]\Big)\,dt +\sigma dW^p_t}\\
			d\bar{y}^p_t&=a\Big(b\bar{x}^p_t-\bar{y}^p_t+c\Big)\,dt\\
			d\bar{s}^{p}_t&=\Big(-\frac{\bar{s}^{p}_t}{\tau}+\alpha(\bar{x}^p_t)(1-\bar{s}^{p}_t)\Big)\,dt.
		\end{cases}
	\end{equation}
The probability distributions $p_E(dx,dy,ds)$ and $p_I(dx,dy,ds)$ of neurons in the E/I populations, again under appropriate assumptions, satisfy a system of two coupled McKean-Vlasov-Fokker-Planck equations:}

	\begin{multline*}
		\partial_t p_\beta = -\Big(A_x(x,y)+\gamma g_{E\beta} \mathbb{E}[s_E]+\gamma g_{I\beta}\mathbb{E}[ s_I]\Big)p_\beta(x,y,s)\\
		-\Big(A(x,y)+\gamma g_{E\beta} (x-E_E)\mathbb{E}[ s_E]+\gamma g_{I\beta} (x-E_I)\mathbb{E}[ s_I]\Big)\partial_xp_\beta(x,y,s)+\frac{\sigma^2}{2}\partial_{xx}^2 p_\beta\\
		-B_y(x,y)p_\beta-B(x,y)\partial_y p_\beta-\Big(-\frac{1}{\tau}-\alpha(x)\Big)p_\beta-\Big(-\frac{s}{\tau}+\alpha(x)(1-s)\Big)\partial_sp_\beta.
	\end{multline*}
	for $\beta=\{E,I\}$, $A(x,y)=f(x)-y$, $B(x,y)=a(bx-y+c)$, and $\mathbb{E}[ s^\beta]:=\int_{\R^3} s p_\beta(x,y,s)$ is the probabilistic expectation of the synaptic variables. \review{We are interested in the limit $\gamma \to \infty$, and the application of the conjecture suggests concentration on singular, Dirac measures. To desingularize the problem, we shall use a Hopf-Cole transform (analogue to a Wentzel–Kramers–Brillouin (WKB) method well known to physicists)} $
	p_\beta=\exp\left(\gamma \varphi_\beta\right) $. We get:
	$$
	\frac{\partial_v p_\beta}{p_\beta} = \gamma \partial_v\varphi_\beta,\qquad \frac{\partial^2_v p_\beta}{p_\beta} = \left|\gamma \partial_v \varphi_\beta \right|^2+\gamma \partial^2_v \varphi_\beta,
	$$
 	we find that $\varphi_\beta$ is a solution to:
	\begin{multline*}
		\gamma \partial_t \varphi_\beta = -\Big(A_x(x,y)+\gamma g_{E\beta} \mathbb{E}[ s_E]+\gamma g_{I\beta} \mathbb{E}[  s_I]\Big)\\
		-\gamma \Big(A(x,y)+\gamma  g_{E\beta} (x-E_E) \mathbb{E}[  s_E]+\gamma g_{I\beta} (x-E_I)\mathbb{E}[  s_I]\Big)\partial_x\varphi_\beta
		\\
		+\frac{\sigma^2}{2} \left|\gamma \partial_x \varphi_\beta\right|^2+\frac{\sigma^2}{2}\gamma \partial^2_{xx} \varphi_\beta-B_y(x,y)-\gamma B(x,y)\partial_y\varphi_\beta\\
		+\frac{1}{\tau}+\alpha(x)-\gamma \Big(-\frac{s}{\tau}+\alpha(x)(1-s)\Big)\partial_s\varphi_\beta.
	\end{multline*}

Identifying the terms of the same order of magnitude in this expansion, we find, for the terms of order $\gamma^{2}$, the equation:
\[
\frac{\sigma^2}{2}(\partial_x \varphi_\beta)^2=\Big(g_{E\beta} (x-E_E)\mathbb{E}[  s_E]+g_{I\beta} (x-E_I)\mathbb{E}[  s_I]\Big)\partial_x\varphi_\beta.
\]

\review{This equation has two possible solutions: either $\partial_x \varphi_\beta=0$, so $\varphi=\kappa(y,s)$ a function independent of $x$. Such a solution would not be realistic because of the normalization condition for $p_\beta$. Consider now a point where $\partial_x \varphi_\beta\neq 0$. Assuming regularity, we have a non-trivial interval around that point where $\partial_x \varphi_\beta\neq 0$. Dividing by $\partial_x \varphi_\beta$ and integrating the differential equation then yields:}
\[
\varphi_\beta =\frac{1}{\sigma^2}\Big[g_{E\beta} (x-E_E)^2 \mathbb{E}[  s_E]+g_{I\beta} (x-E_I)^2\mathbb{E}[  s_I]\Big]+\kappa(y,s)
\]
In order to have solutions for large $\gamma$, the function $\varphi_\beta$ must be concave (as a function of $x$), requiring:
\[
\partial^2_x\varphi_E<0\quad\Rightarrow\quad g_{E\beta}  \mathbb{E}[s_E]+g_{I\beta}  \mathbb{E}[s_I]<0,
\]
which is precisely the condition for stability of the balance regime derived above and predicted by the conjecture. Classical theorems of concentration of measures would typically require $\varphi_E$ to be non-positive and ensure that, when $\gamma \to \infty$, the system converges to the zeros of $\varphi_E$. Because $\varphi_E$ is quadratic, this implies that the maximal value of $\varphi_E$ is equal to $0$. The maximal value is reached where
\[
\frac{2}{\sigma^2}\Big[g_{E\beta} (x-E_E)  \mathbb{E}[s_E]+g_{I\beta} (x-E_I) \mathbb{E}[s_I]\Big]=0, \]
that is, at a voltage $x^\star_\beta$ equal to:
\[x^\star_\beta= \frac{g_{E\beta} E_E  \mathbb{E}[s_E]+g_{I\beta}E_I  \mathbb{E}[s_I]}{g_{E\beta}  \mathbb{E}[s_E]+g_{I\beta}  \mathbb{E}[s_I]}
\]
is precisely the voltage corresponding to balance, as we derived above.


\subsection{Method M2. Early behaviors and hydrodynamic limits}\label{sec:HeuristicTime}
\review{Method M2 by contrast, highlights the rapid convergence to balanced regimes, at timescales of order $1/\gamma$. Defining $\tilde{x}^i_t=x^i_{t/\gamma}$ where $x^i$ is a solution of equation~\eqref{eq:DoubleLim}, standard calculus yields: 
\begin{equation*}
d\tilde{x}^i_t =\left(\sum_{p=1}^P \sum_{j=1}^n g_{p_i,p} \,b_{p_i,p}(x^i,x^j) \right)\,dt + \frac{1}{\gamma} f_{p_i}(x^i_t)\,dt+ \frac{\sigma_{p_i}}{\sqrt{\gamma}} dW^i_t. 
\end{equation*}
This new equation makes it apparent that, at that timescale, the system may be seen as a small perturbation of a differential equation, closely resembling equation~\eqref{eq:earlyBehaviors}, which appears to amount to replacing the sum over all neurons by the statistical average over the whole population (which, if initial conditions are assumed independent, seems plausible). Here again for specificity, we formally apply these ideas to the FitzHugh-Nagumo system with chemical synapses given by equations~\eqref{eq:EIFhN}.} Defining $(\tilde{x}^i_t,\tilde{y}^i_t,\tilde{s}^i_t)=(x^i_{t/\gamma},y^i_{t/\gamma},s^i_{t/\gamma})$, we have:
\begin{equation}
		\begin{cases}
			d\tilde x^i_t&=\displaystyle{\Big(\frac{1}{n}\sum_{p=1}^P\sum_{j=1}^{n}g_{pp_i} (\tilde x^i_t-E_{p}) \tilde s^{j}_t+\frac 1 {\gamma} (f(\tilde x^i_t)-\tilde y^i_t)\Big)\,dt +\frac{\sigma}{\sqrt{\gamma}} d\tilde{W}^i_t}\\
			d\tilde y^i_t&=\frac 1 {\gamma} a\Big(b\tilde x^i_t-\tilde y^i_t+c\Big)\,dt\\
			d\tilde s^{i}_t&=\frac 1 {\gamma} \Big(-\frac{\tilde s^{i}_t}{\tau}+\alpha(\tilde x^i_t)(1-\tilde s^{i}_t)\Big)\,dt
		\end{cases}
	\end{equation}
	where $\tilde{W}^i$ are independent Brownian motions. Therefore, when $n\to\infty$, perturbation theory suggests that, at this timescale, $\tilde{x}$ and $\tilde{s}$ are constant while $\tilde x$ shall satisfy the equation:
 		\begin{equation}
		\nonumber \frac{d{\tilde{x}}^i_t}{dt}=g_{Ep_i} (\tilde x^i_t-E_{E}) \bar{s}^{E}_t +g_{Ip_i} (\tilde x^i_t-E_{I}) \bar{s}^{I}_t \\
		\end{equation}
	where we noted $p_i\in\{E,I\}$ the population of neuron $i$ and $\bar{s}^{p}$ the empirical average of the synaptic conductances for all neurons in population $p$ (here, simply determined by the distribution of initial conditions). This equation is identical to the one derived above following the conjecture, and thus the condition for convergence to the balanced regime ($(g_{E\beta} \bar{s}^E+g_{I\beta} \bar{s}^I)<0$) as well as the convergence of voltages to $(x^\star_E,x^\star_I)$ ensue. At this voltage, all neurons, both excitatory and inhibitory, receive a net input equal to $0$, corresponding to a balance condition. 
	
	In addition to providing heuristic support for the derivations above, these formal derivations also suggest that the timescale of collapse onto a balanced regime is proportional to $\gamma (g_{E\beta} \bar{s}^E+g_{I\beta} \bar{s}^I)$, it is thus arbitrarily fast as $\gamma$ increases when the convergence condition is satisfied. Moreover, a perturbation arising at any point in time and pulling the system away from the balanced regime is quickly absorbed at the same timescale.

\section{Theoretical Results}\label{sec:theory}
\review{In the previous section, we explored the consequences and different mathematical or theoretical approaches to approach the problem of convergence to balance in networks with diverging interactions. Mathematically, proving the result stated in conjecture~\ref{conjecture} seems out of reach of current methods. We present a proof of convergence to the balance regime in a special limit and for a class of one-dimensional models with separable coupling. The steps of the proof are outlined below, and mathematical details are provided in the Appendix sections A-C. }

\review{The asymptotic regime considered theoretically corresponds to the dynamics arising when taking the limit $n\to\infty$ first, and then $\gamma \to \infty$, in the description of figure~\ref{fig:doubleLimit}. We explored these asymptotic regimes in~\cite{quininao2020clamping} in the context of the model introduced in section~\ref{sec:ElectricalExample}. The limit $n\to\infty$ relied on previous results on the mean-field limit of the system proven earlier~\cite{mischler2015kinetic}, and the limit $\gamma\to\infty$ was proven using Hopf-Cole methods under the assumption that initial conditions show a Gaussian concentration to a Dirac mass as $\gamma\to\infty$. This blow-up method allowed showing that the sequence of measures collapsed to a Dirac mass, and that the blow-up profile had a Gaussian profile. }
\review{Here, we extend these results to a general, one-dimensional model with non-diffusive coupling. In this paper, we demonstrate that Hamilton-Jacobi approaches introduced in~\cite{quininao2020clamping} extend to a general model inspired by the chemical coupling setting introduced in section~\ref{sec:Chemical}. Similar to the systems studied in that section, we consider that interactions are non-diffusive but can be written as a separable function 
\[b(x_i,x_j)=\alpha(x_i)\beta(x_j)\] 
where $\alpha$ and $\beta$ are smooth functions and $\beta$ is bounded. \review{These hypotheses are inspired by chemical synapses, described in more detail in section~\ref{sec:Chemical}. In that model, the interaction term is the product of a linear function of the voltage $x^i$ with a dynamical synaptic variable $s^j$. In a quasi-static approximation of the synaptic variable,  $b(x_i,x_j)=(x_i-E)\,\frac{a(x_j)}{1/\tau+a(x_j)}$, where $a$ is the sigmoidal function that was noted $\alpha$ in equation~\eqref{eq:EIFhN}. }}

For simplicity, in this first study of this general problem, we consider a one-dimensional, one-population model ($d=1$, $P=1$, $\eps=\gamma^{-1}$, for consistency with the literature on viscosity solutions):
\begin{equation}\label{eq:pdeSummary}
\partial_t\mu_t = -\partial_x\left[\left(f(x)- \eps^{-1}\int_{y} b(x,y)\mu_t \right)\mu_t\right]+\frac{\sigma^2}{2}\partial^2_{xx}\mu_t, 
\end{equation}

\review{We prove here the existence and uniqueness of weak solutions to the nonlinear equation~\eqref{eq:pde} in an appropriate functional space. Similar to the functional setting proposed in~\cite{mischler2015kinetic}, we work in the space of functions $L^2_{m_\kappa}(\R)$ with $m_\kappa = e^{\kappa(1+x^2)}$ for some  $\kappa\in(0,1)$, which is composed of all functions $u$ with finite weighted norm
 $$
 \|u\|_{L^2_{m_\kappa}}:= \left(\int u(y)^2\,m_\kappa(y) dy\right)^{1/2}.
 $$
 \review{Working in this space allows ensuring that any solution will have sub-Gaussian or Gaussian tails, which is particularly relevant in the case of the FitzHugh-Nagumo system, where a quartic potential confines Brownian particles, so it is expected that tails of the distribution will not be heavy. Making sure that tails decay fast enough further allows considering moments of any degree and expected values of exponential functions, as long as their divergence at infinity is upperbounded by $m_\kappa$. }
 \begin{theorem}
\label{th:MR-existence}
Assume that $b(x,y)=\alpha(x)\beta(y)$ with $\beta$ strictly positive and upper-bounded by a polynomial function. Assume furthermore that $f$ and $\alpha$ are $C^1$ functions such that there are some positive constants $C_0,C_1$ and $C_2$ such that
\begin{equation}
\label{hyp:mr-existence}
f'(x)\leq C_0(1-x^2),\quad \lim_{x\rightarrow-\infty}\alpha'(x)=C_1, \quad \lim_{x\rightarrow+\infty}\alpha'(x)=C_2,
\end{equation}
and that, for all $\eps>0$, the initial conditions $\mu_{0,\eps}$ belong to the ball of radius $K_0$ of $L^2_{m_1}(\R)$. Then for each $\kappa\in(0,1)$ and $\eps>0$ there exists a unique solution $\mu_\eps\in C(\R^+;L^2_{m_\kappa}(\R))$ to equation~\eqref{eq:pde}. This solution satisfies the mass conservation principle and is nonnegative for all $t>0$.
\end{theorem}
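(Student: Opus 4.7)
The separable structure of the coupling is the lever that makes the nonlinear problem tractable. Setting $B_t = \int\beta(y)\mu_t(y)\,dy$, equation~\eqref{eq:pdeSummary} reduces to the linear, non-autonomous Fokker--Planck equation
\begin{equation*}
\partial_t\mu = -\partial_x\bigl[(f(x)-\eps^{-1}\alpha(x)B_t)\mu\bigr] + \tfrac{\sigma^2}{2}\partial_{xx}\mu,
\end{equation*}
so the entire nonlocal interaction is carried by the scalar observable $B_t$. The plan is to (i) construct, for every admissible input $B\in C([0,T];\R^+)$ with $\|B\|_\infty\leq M$, a unique weak solution $\mu^B\in C([0,T];L^2_{m_\kappa}(\R))$ of the linear problem, and (ii) close a fixed-point argument on the map $\Phi:B\mapsto(t\mapsto\int\beta(y)\mu^B_t(y)\,dy)$. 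Nonnegativity and mass conservation will then be harvested from the linear step.

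The technical core is a weighted energy estimate for the linear problem. Multiplying by $\mu\,m_\kappa$ and integrating by parts yields four contributions: the dissipation $-\tfrac{\sigma^2}{2}\int(\partial_x\mu)^2 m_\kappa$; a term $-\tfrac{1}{2}\int(f'-\eps^{-1}\alpha' B_t)\mu^2 m_\kappa$; a transport--weight cross term $\kappa\int x(f-\eps^{-1}\alpha B_t)\mu^2 m_\kappa$ arising from $\partial_x m_\kappa = 2\kappa x\,m_\kappa$; and a diffusion--weight piece of order $\sigma^2\kappa\int(1+2\kappa x^2)\mu^2 m_\kappa$. The key observation is that although $-f'$ and the diffusion--weight piece both grow like $x^2$, the cross term $\kappa\int x f(x)\mu^2 m_\kappa$ inherits the quartic confinement of $f$: the hypothesis $f'\leq C_0(1-x^2)$ forces $f$ to be asymptotically cubic with negative leading coefficient, so $xf(x)\sim -x^4$ at infinity and this term supplies a strongly dissipative $-\kappa\int x^4\mu^2 m_\kappa$ contribution. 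The bounds on $\alpha'$ at $\pm\infty$ give $|\alpha(x)|\lesssim 1+|x|$, so the worst interaction contribution is of order $\eps^{-1}\|B\|_\infty\int x^2\mu^2 m_\kappa$, which Young's inequality absorbs into the quartic dissipation up to a constant $C(\eps,M,\kappa)$. I expect this balancing to be the main obstacle; making it rigorous will require performing the computation on suitably regularized (mollified and truncated) solutions, and the condition $\kappa<1$ is likely to enter precisely in guaranteeing that every destabilizing lower-order term can be absorbed uniformly in time into the quartic confinement.

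Granted the resulting Gronwall bound $\tfrac{d}{dt}\|\mu_t\|^2_{L^2_{m_\kappa}}\leq C(\eps,M)\|\mu_t\|^2_{L^2_{m_\kappa}}$, linear existence follows from a standard regularization scheme: mollify the coefficients, truncate in $x$, apply classical parabolic theory to the regularized problem, and pass to the limit using the uniform weighted bound together with weak compactness and an Aubin--Lions argument for time continuity. Uniqueness is the same estimate applied to a difference of solutions. Nonnegativity comes from testing against $(\mu_t)_-\,m_\kappa$ and observing that the resulting inequality forces $\|(\mu_t)_-\|_{L^2_{m_\kappa}}\equiv 0$, and mass conservation is then immediate from the divergence structure of the equation together with the Gaussian decay encoded by $L^2_{m_\kappa}$.

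For the fixed-point step, the difference $\delta\mu=\mu^{B^1}-\mu^{B^2}$ of two linear solutions satisfies the same equation with an additional source $\eps^{-1}(B^1_t-B^2_t)\partial_x(\alpha\mu^{B^2})$, and the same weighted energy estimate yields $\|\delta\mu_t\|_{L^2_{m_\kappa}}\leq C(\eps,M,T)\int_0^t\|B^1-B^2\|_{C([0,s])}\,ds$. Since $\beta$ is polynomially bounded and $m_\kappa$ is Gaussian, $|B^1_t-B^2_t|\leq \|\beta\,m_\kappa^{-1/2}\|_{L^2}\,\|\delta\mu_t\|_{L^2_{m_\kappa}}$, so $\Phi$ is a strict contraction on $C([0,T_0];\R^+)$ for $T_0$ small enough and Banach's fixed-point theorem delivers local existence and uniqueness. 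The global $L^2_{m_\kappa}$ bound on $\mu^B$ yields a uniform bound on $\|B\|_\infty$, which permits iterating the argument over successive intervals of length $T_0$ to extend the solution to all of $\R^+$.
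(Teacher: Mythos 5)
Your proposal takes essentially the same route as the paper: both reduce the nonlinear equation, via the separable coupling, to a linear Fokker--Planck with a frozen scalar $I_u$ (your $B_t$), derive a weighted $L^2_{m_\kappa}$ energy estimate using the quartic confinement inherited from $f'\leq C_0(1-x^2)$ together with the linear growth of $\alpha$, and then run a Banach--Picard contraction on a short time interval followed by iteration. The one stylistic difference worth noting is where the fixed point lives: the paper iterates on a ball of probability densities $A_\kappa\subset C^1([0,T];L^2_{m_\kappa})$ with explicit constraints ($\|u\|_{L^1}=1$, $u\geq 0$, $I_u\leq\tilde K$, $\|u\|_{L^2_{m_\kappa}}\leq a$), whereas you iterate on the scalar observable $B\in C([0,T_0];\R^+)$ directly, which is a cleaner way of exposing what is actually being closed. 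You also explicitly flag the regularization/truncation and Aubin--Lions step for the linear theory, which the paper glosses over.

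There is, however, one genuine gap in the global-in-time step. You assert that ``the global $L^2_{m_\kappa}$ bound on $\mu^B$ yields a uniform bound on $\|B\|_\infty$,'' but the Gronwall estimate you obtain has the form $\|\mu_t\|_{L^2_{m_\kappa}}\leq \sqrt{K_0}\,e^{C(\eps,M)\,t}$, which \emph{grows} with time, and hence so does the ensuing bound on $\|B\|_\infty$ via Cauchy--Schwarz. This means that when you restart the contraction on $[T_0,2T_0]$, the admissible constant $M$ must be enlarged, which in turn degrades the contraction time; without further input, the successive time steps could shrink so fast that their sum converges, blocking global existence. The paper plugs this hole with a separate \emph{a priori} moment estimate (Lemma~\ref{lem:y2}): using only the confinement hypotheses and the sign $I_{\mu_\eps}\geq 0$ (guaranteed by $\beta>0$ and nonnegativity), one shows that $\int y^{2k}\mu_\eps(t,dy)$ obeys a logistic-type differential inequality and is therefore bounded \emph{uniformly in $t$} (and in $\eps$), which then bounds $I_{\mu_\eps}(t)$ uniformly. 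You should add an analogous moment/ODE comparison argument on the linear problem (with $B_t$ frozen and nonnegative) before the iteration; it is precisely the cubic decay of $f$ and the linear growth of $\alpha$ that make this ODE comparison close. A second, smaller point: the role of $\kappa\in(0,1)$ is not primarily to absorb destabilizing terms into the quartic confinement; it is needed so that weights like $\int\beta^2/m_\kappa$ and $\int\alpha^2 x^2 v^2\,m_\kappa$ are finite given initial data in $L^2_{m_1}$, exploiting that $m_\kappa/m_1\to 0$ at infinity.
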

\review{The proof of this result is deferred to Appendix~\ref{sec:priori}. It proceeds as follows:
\begin{itemize}
\item We start by showing that, under assumption~\eqref{hyp:mr-existence}, the interaction term $I_{\mu_\eps}(t)=\int \beta(y)\mu_{\eps}(t,dx)$ is uniformly bounded. 
\item This allows us to initiate a Picard fixed-point method to show well-posedness of the problem. The boundedness of the interaction term ensures that a map, of which solutions to the McKean-Vlasov problem are fixed points, is defined in the appropriate space, allowing us to iterate this map to create Picard sequences.
\item We next show that the map, now linear, is a contraction. 
\end{itemize} 
As a consequence, we obtain the existence of a unique weak solution to equation~\eqref{eq:pde}.
We next turn to studying the sequence of solutions as $\gamma=\epsilon^ {-1}$ varies. Lemma~\ref{lem:Jeps} shows that for a particular class of functions $\alpha$ and $\beta$, and after extracting a subsequence, $I_\eps(t)$ converges almost everywhere in $t$ to a function $I(t)$, as $\eps\rightarrow0$. This result is a key step in the subsequent analysis of the Hopf-Cole transformation of the solutions to the system, defined by $\phi_\eps(t,x)=\eps\log \mu_{\eps}(t,x)$,  which is the focus of the second main result:}
\begin{theorem}
\label{th:MR-convergence}
Under the same hypotheses of Theorem~\ref{th:MR-existence}, let $\mu_\eps(t,x)$ the solution to~\eqref{eq:pdeSummary} with initial conditions $\mu_{\eps,0}(x)$ satisfying the condition
$$
\sup_{0<\eps<1}\eps\log\mu_{0,\eps}(x)\leq -Ax^2+B,
$$
for some constants $A,B$ with $A>0$. Then the family of functions $\phi_\eps(t,x)$ are well defined and we have the following
\begin{itemize}
\item Both sequences $(\phi_\eps)_{\eps\in(0,1)}$ and $(I_{\mu_\eps}(t))_{\eps\in(0,1)}$ are relatively compact.
\item Let $I_{\mu_{\eps_n}}(t)$ be a sequence converging to a function $I(t)$. There is a subsequence $\{\eta_n\}\subset\{\eps_n\}$ and a continuous function $\phi(t,x)$ such that $\phi_{\eta_n}\rightarrow\phi$ and $\phi$ is a viscosity solution to 
$$
-\alpha(x)I(t)\partial_x\phi(t,x)-\frac{\sigma^2}{2}|\partial_x\phi(t,x)|^2= 0.
$$
\item Along a convergent subsequence $\phi_{\eps_n}$, the support of $\mu_{\eps_n}$ shrinks, \review{a.e. in $t$,} on the zero set of $\phi$.
\end{itemize}
\end{theorem}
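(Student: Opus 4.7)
I would derive a PDE for $\phi_\eps$ by direct substitution into~\eqref{eq:pdeSummary}. Writing $\mu_\eps = e^{\phi_\eps/\eps}$ and using $\partial_x\mu_\eps/\mu_\eps = \eps^{-1}\partial_x\phi_\eps$, $\partial_{xx}^2\mu_\eps/\mu_\eps = \eps^{-2}|\partial_x\phi_\eps|^2 + \eps^{-1}\partial_{xx}^2\phi_\eps$, together with the separable structure $b(x,y)=\alpha(x)\beta(y)$, I find after multiplying through by $\eps$ that
\begin{equation*}
\eps\,\partial_t\phi_\eps \;=\; \alpha(x)\,I_{\mu_\eps}(t)\,\partial_x\phi_\eps \;+\; \tfrac{\sigma^2}{2}|\partial_x\phi_\eps|^2 \;+\; \eps\bigl(\alpha'(x)I_{\mu_\eps}(t) - f(x)\partial_x\phi_\eps + \tfrac{\sigma^2}{2}\partial_{xx}^2\phi_\eps\bigr) \;-\; \eps^2 f'(x).
\end{equation*}
This is a viscous Hamilton-Jacobi equation with vanishing viscosity $\eps\sigma^2/2$ whose formal limit as $\eps\to 0$ is precisely the stationary equation stated in the theorem (with $t$ appearing only through $I(t)$). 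The nonnegativity and strict positivity of $\mu_\eps$ needed to define $\phi_\eps$ follow from the strong maximum principle applied to~\eqref{eq:pdeSummary}, whose coefficients are uniformly controlled thanks to Theorem~\ref{th:MR-existence}.

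Compactness splits into two parts. The compactness of $(I_{\mu_\eps})_{\eps\in(0,1)}$ is precisely the content of the already-cited Lemma~\ref{lem:Jeps}, from which I extract $\{\eps_n\}$ such that $I_{\mu_{\eps_n}}\to I$ a.e.\ in time. For $(\phi_\eps)_{\eps\in(0,1)}$, the assumption $\sup_\eps \eps\log\mu_{0,\eps}\leq -Ax^2+B$ suggests seeking a quadratic supersolution of the form $\Phi(t,x) = -A(t)x^2 + B(t)$; plugging $\Phi$ into the PDE above and using the growth conditions~\eqref{hyp:mr-existence} on $f'$ and $\alpha'$ together with the uniform bound on $I_{\mu_\eps}$ yields ODEs for $A(t),B(t)$ admitting positive solutions on bounded time intervals, so a comparison principle propagates $\phi_\eps \leq \Phi$ forward in time and in particular provides uniform Gaussian tails. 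For a modulus of continuity I would run a Bernstein-type argument on $w_\eps = |\partial_x\phi_\eps|^2$: differentiating the PDE in $x$ produces a parabolic inequality that, coupled with the quadratic upper bound, gives a locally uniform $L^\infty$ estimate on $\partial_x\phi_\eps$, and time equicontinuity then follows directly from the PDE. Arzelà-Ascoli yields a further subsequence $\{\eta_n\}\subset\{\eps_n\}$ with $\phi_{\eta_n}\to\phi$ locally uniformly, $\phi$ being continuous.

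The limit equation is identified by the half-relaxed limits technique of Barles and Perthame adapted to the measurable time coefficient $I$: for a smooth test function $\psi$ such that $\phi-\psi$ has a strict local maximum at $(t_0,x_0)$ with $t_0$ a Lebesgue point of $I$, local uniform convergence produces maximizers $(t_\eps,x_\eps)\to(t_0,x_0)$ at which $\partial_x\phi_\eps = \partial_x\psi$ and $\partial_{xx}^2\phi_\eps\leq \partial_{xx}^2\psi$; feeding this into the PDE and sending $\eps\to 0$ along the subsequence yields the subsolution inequality, and the supersolution inequality is obtained symmetrically. For the support concentration, since $\int \mu_\eps\,dx = 1$ the Laplace-Varadhan lemma (justified by the Gaussian tails from Step~2) gives $\max_x\phi(t,\cdot) = 0$ for a.e.\ $t$; wherever $\phi(t,x)<0$, local uniform convergence implies $\mu_\eps(t,x) \leq e^{\phi(t,x)/(2\eps)} \to 0$, so the supports shrink, a.e.\ in $t$, onto $\{\phi = 0\}$. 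The main obstacle is the Bernstein step: the quadratic Hamiltonian competing with the vanishing viscosity $\eps\sigma^2/2$ and the unbounded coefficients $f$ and $\alpha'$ makes uniform-in-$\eps$ gradient control delicate, and a secondary difficulty is the measurability-only regularity of the limit $I(t)$, which forces the Lebesgue-point argument in the viscosity step rather than a straightforward pointwise passage to the limit.
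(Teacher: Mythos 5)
Your overall skeleton is right, but the two estimates that constitute the technical core of the paper's argument are the places where your plan breaks down.

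First, the quadratic supersolution ansatz $\Phi(t,x) = -A(t)x^2 + B(t)$ does not survive the $\eps^{-1}$ terms. Plugging $\Phi$ into the operator
\[
\mathcal L_\eps\phi = \partial_t\phi + (\eps f' - \alpha'I_\eps) + \frac{\eps f - \alpha I_\eps}{\eps}\,\partial_x\phi - \frac{\sigma^2}{2\eps}|\partial_x\phi|^2 - \frac{\sigma^2}{2}\partial_{xx}\phi
\]
gives an $\eps^{-1}$-order contribution $\tfrac{2A}{\eps}\bigl[I_\eps(t)\,x\,\alpha(x) - \sigma^2 A x^2\bigr]$. Near $x = 0$, where $x\alpha(x)\approx \alpha(0)x$ dominates $x^2$, this is a term of size $\eps^{-1}$ with the wrong sign on a whole half-neighbourhood of the origin whenever $\alpha(0)\neq 0$; nothing in your lower-order budget (the terms in $D'$, $B'(t)$, $\eps f'$, etc., are at best $O(1)$) can absorb an $\eps^{-1}$ defect. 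The paper sidesteps this precisely by replacing $x^2$ with $\Lambda(x)=\int_0^x\alpha$: with $\psi_\eps=-A'I_\eps(t)\Lambda(x)-\tfrac\eps2 B'x^2 + D't + E'$, the $\eps^{-1}$-order piece factors as $A'\,\alpha(x)^2 I_\eps(t)^2\bigl(1 - \tfrac{\sigma^2 A'}{2}\bigr)$, which is manifestly nonnegative for all $x$ once $A' < 2/\sigma^2$. The shape of the supersolution must track $\alpha$ exactly at leading order; a generic quadratic in $x$ will not do.

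Second, your Bernstein step as stated lacks the damping term that makes the estimate close. Differentiating the Hamilton--Jacobi equation in $x$ and testing with $q_\eps=\partial_x\phi_\eps$ produces, besides the positive-feedback term $\alpha'I_\eps\,q_\eps^2$, only a transport term $\sigma^2 q_\eps^2\partial_xq_\eps$ of cubic homogeneity --- not the coercive $-c\,|q_\eps|^4$ that a Bernstein argument needs to beat the $\eps^{-1}$ coefficients. You correctly identify this as the main obstacle, but ``coupled with the quadratic upper bound'' does not resolve it. The paper's resolution is structural: the change of variable $\phi_\eps = 2F^2 - w_\eps^2$ (well defined thanks to the supersolution bound $\phi_\eps < 2F^2$) turns the Hamilton--Jacobi equation for $\phi_\eps$ into one for $w_\eps$ whose $x$-derivative $p_\eps=\partial_x w_\eps$ satisfies a parabolic inequality with the explicit damping term $\tfrac{\sigma^2}{2}\bigl(\tfrac{\gamma''}{\eps}+\tfrac{\gamma'''}{\gamma'}-\tfrac{\gamma''^2}{\gamma'^2}\bigr)|p_\eps|^3 \le -\tfrac{\sigma^2}{\eps}|p_\eps|^3$. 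Comparison with the space-homogeneous solution $\sqrt{\eps/(\sigma^2(1+\eps)t)}+\theta(T)$ then delivers the uniform Lipschitz bound. This concavity trick is the crucial idea and it is missing from your outline.

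Third, and more minor: time equicontinuity does not ``follow directly from the PDE''; it is itself a comparison argument (the paper constructs barriers of the form $\phi_\eps(s,x)+\eta+A(e^{\eps(x-y)^2}-1)+B(t-s)$ in the style of Barles--Biton--Ley). Your half-relaxed-limits identification of the limit equation, your invocation of Lemma~\ref{lem:Jeps} for compactness of $I_\eps$, and your Laplace-type argument for the support collapse are all fine and essentially equivalent to what the paper does in Appendix~C.
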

\review{To prove this result, we then study in Appendix~\ref{sec:hopf} some regularity estimates obtained from maximum/comparison principles for elliptic equations. These technical steps will provide us with the main elements used to prove, in Appendix section~\ref{sec:limit}, Theorem~\ref{th:MR-convergence}. More precisely, noting that $\mu_\eps=\exp(\phi_\eps/\eps)$ for the limit to exist as $\eps\rightarrow0$, it is necessary that $\phi_\eps$ becomes a nonnegative function. The steps of the proof begin with Proposition~\ref{prop:supersol}, where we show that $\phi_\varepsilon$ is uniformly bounded from above by a function resembling a quadratic polynomial with a negative leading coefficient outside a large compact set. Next, in Proposition~\ref{prop:regularity_space}, we analyze the spatial derivatives of $\phi_\varepsilon$ and deduce that the family $\{\phi_\varepsilon\}$ is uniformly bounded on compact subsets of $(0, \infty) \times \mathbb{R}$. We also investigate the time regularity of the family $\{\phi_\varepsilon\}$.
With all these auxiliary results, we establish that for any adherent point of the set $\{I_\varepsilon\}$, there exists a subsequence $\phi_{\eps}$ converging to a continuous function $\phi(t,x)$, and we characterize those limits.}
\color{black}}

\section{Discussion and Conclusion}
\review{The mathematical theory of large-scale systems of interacting agents has been largely focused on regimes where interactions between neurons become infinitesimal in the large $n$ limit, favoring averaging effects~\cite{arous1995large,arous1997symmetric,baladron2011mean,cabana1,cabana2,robert2014dynamics}, and preventing any possible divergence of the interaction term. Recently, Ramanan and collaborators dropped the hypothesis of infinitesimal interactions, and investigated the behavior of stochastic particle systems on sparse graphs such that each neuron is expected to receive a finite input even in the asymptotic regime~\cite{ramanan2023interacting,ramanan2023large,ganguly2024hydrodynamic,lacker2023local,ramanan2022beyond}. These novel techniques rely on local limits of random graphs, Markov random fields, and stochastic analysis. While interactions are kept non-vanishing, the expected interaction term remains finite again because of the dilution of the graph, with a connection probability between any two agents decaying as the network size increases.}

\review{ The regime we explored in the present manuscript is different from these works in that we consider that neither the scaling of the synaptic weights nor the scaling of the connection probability ensures boundedness of the interaction term. In our case, under appropriate assumptions, the entire dynamics must conspire to maintain finite interactions for the equation not to diverge. In this regime, conjectured by Van Vreeswijk and Sompolinsky in~\cite{vreeswijk1998chaotic}, those scalings are indeed too slow (in terms of decay according to the network size) to ensure boundedness of the interaction term. Phenomenologically, Sompolinsky and Van Vreeswijk instead proposed that in this case, the dynamics will first quickly evolve to cancel these interactions (that is, \emph{balance} excitation and inhibition), when possible and when the dynamics allow, leading the system to collapse rapidly on a statistical manifold where the interactions are kept in check, and where non trivial (in the original paper, chaotic) dynamics can emerge. It is this regime that we studied in the present paper. From a mathematical viewpoint, the classical \emph{mean-field} techniques reviewed above, which apply to systems with vanishing interaction terms or vanishing connection probabilities and reveal averaging effects, are inoperant in the present setting. This question raises several new mathematical problems that are not addressed by any existing theory, to the best of our knowledge. This is why we initially presented the problem in its full generality and formulated a relatively general mathematical conjecture. Proving this conjecture, or some aspects of it, constitutes an interesting challenge that shall combine stochastic analysis techniques with dynamical systems techniques and analysis. Although we had not yet identified the generality of the problem, we first analyzed problems related to these regimes of activity in~\cite{quininao2020clamping} in a particular neural network. In that paper, we proved a concentration of the distribution of mean-field solutions when coupling diverged, and to that purpose used asymptotic desingularization techniques for PDEs that had been introduced in the context of mathematical ecology~\cite{barles2009concentration,barles2002geometrical,mirrahimi2015uniqueness,bouin2013hamilton,lorz2011dirac}. These results were then proven using other stochastic analysis techniques by Blaustein and Filbet, including the control of the convergence in the Wasserstein space (allowing them to lift some assumption on initial conditions~\cite{blaustein2023concentration}) or achieve a stronger notion of convergence~\cite{blaustein2023large}, where interactions were extended to include a dependence in space. All of these works relied substantially on the straightforward nature of the interaction term and are not trivial to generalize to other types of couplings. }

\review{In terms of proofs, while we were able in the present manuscript to generalize desingularization techniques to a one-dimensional system with separable coupling, we believe the desingularization approach based on Hopf-Cole transforms (or similar) is bound to be limited to a narrow set of models. We proposed here two theoretical points of view on the conjecture, by formulating it as a double-limit problem in the space of probability measures on continuous functions (or, in the space of random processes). This yields two distinct approaches. The first limit consists of considering what happens at very short timescales (on the order of the typical divergence of the interaction term). As timescales vanish with the network size, one obtains a typically simple, non-stochastic dynamical system where all variables are fixed except those affected by the divergence of the interaction term. The balanced regime may appear, from this lens, as a dynamical equilibrium of the system at short timescales. The other limit considers first that the network size diverges, and then takes the interaction coefficient to infinity. This provides two avenues to approach the conjecture. This lifts part of the veil on a new problem and a new phenomenology that are well worth studying mathematically and computationally. }

\review{From the application's point of view, in neuroscience, the importance of this phenomenon and of its consequences is hard to overstate. After its first formulation by Sompolinsky and Van Vreeswijk, it gave rise to a vast literature that explored balanced networks in rhythm generation~\cite{brunel2000dynamics,brunel2000dynamics2}, cluster formation with plasticity~\cite{litwin2014formation,litwin2012slow}, pattern formation~\cite{rosenbaum2014balanced}, and computational capacities of learning networks~\cite{leone2025noise,kadmon2020predictive,stock2022synaptic}. Biologically, this theory of a dynamical balance can account for many of the observations made in the maintenance of balance in networks subject to perturbations~\cite{dai2016maintenance,chen2022homeostatic}, but does not provide an answer as to which of the many possible mechanisms are at play in this maintenance~\cite{turrigiano47keeping,turrigiano2003homeostatic,turrigiano2011too,turrigiano2008self}, and how, at the end of the day, indivual neurons adjust to reach the balanced manifold.}

\review{While not explicitly stated as balanced regimes, several recent works on particle systems tend to reflect the emergence of a collapse to a balance manifold, similar to the result we formulate here. In particular, Bruna, Burger, and De Wit~\cite{bruna2025lane} recently proposed a mathematical model of a foraging colony of ants, where each ant is modeled as a Brownian particle and interactions are modeled through chemotaxis, wherein ants align their orientations upwards towards the gradient of pheromone they release. In addition to some more classical behaviors of typical Keller-Segel type equations~\cite{arumugam2021keller}, they revealed the emergence of singular distributions for larger interaction strengths, including the formation of lanes and Dirac distributions, which are evocative of the balanced manifold predicted in this manuscript. Similarly, polarization phenomena have recently been linked to the stronger type of interactions that may arise in the modern world~\cite{stella2019influence}, where stronger interactions are reported to lead to more segregated components and fewer bridges between them. In ecology, such balances have garnered more interest in recent years~\cite{qian2020balance,neutel2014interaction,mccann1998weak,brooker1998balance}, and theoretically seem to provide a foundation for addressing possible questions related to balancing cooperation and competition in ecosystems subject to severe external environments.  Because the nature of the limiting object and its dynamics are drastically different in balanced systems compared to mean-field regimes, this paper argues for a reevaluation of the choice of models and for exploring which model better describes the collective behavior of large systems in applications, as well as for theoretical statistical models. Among choice models in mathematics, the study of spin glasses has been an active area of research, providing a number of techniques and results to the broader community working on statistical models. Incidentally, just less than a week ago, at the time of this writing, Bates and Sohn introduced a balanced multi-species spin glass model~\cite{bates2025balanced}, the study of which may provide an ideal framework for analyzing these new dynamics. }

{\bf Acknowledgements:} The authors warmly thank the anonymous referee and the Editor, whose suggestions allowed us to improve the manuscript deeply. 

\newpage
\appendix

\section{A priori bounds in the separable case}
\label{sec:priori}

We now establish part of the theory in a simplified regime where $d=1$, $P=1$ and $J_{ij}=\frac{\gamma}{n}$. Here, the coupling coefficient $\gamma$ diverges, and, to align our work with the classical literature on viscosity solutions of functional equations, we define $\eps=\gamma^{-1}>0$, so the large coupling regime corresponds to the limit  $\eps\to 0$. 

Working in the limit $N\to \infty$, we focus our attention on the McKean-Vlasov process given by \eqref{eq:nonlinear_process}, which, in the current setting, simplifies to:
\begin{equation*}
d\bar{x}_t = \left(f(\bar x_t) + \frac 1 \eps \int_{y} b(\bar x_t,y)\mu_t(dy) \right)\,dt+ \sigma dW_t
\end{equation*}
Classically, applying It\^o's lemma to the stochastic process $\varphi(x_t)$ for $\varphi$ a regular test function, we obtain the weak formulation of the Fokker-Planck equation. In detail, It\^o's lemma yields:
$$
d\varphi(x_t) = \varphi'(x_t)\left(f(x_t)- \frac 1 \eps \int_{y}\,b(x_t,y)\mu_t(dy) \right)\,dt+\sigma\varphi'(x_t)dW_t+\frac{\sigma^2}{2}\varphi''(x_t)dt,
$$
and taking the expectation on both sides of this equality, we get:
$$
\frac{d}{dt}\,\Ee[\varphi(x_t)] = \Ee\left[\varphi'(x_t)\left(f(x_t)- \eps^{-1}\int_{y} b(x,y)\mu_t(dy) \right)+\frac{\sigma^2}{2}\varphi''(x_t)\right].
$$
Assuming now that the measure $\mu$ has a density with respect to Lebesgue's measure, and writing, with a slight abuse of notation, $\mu_t(dx)=\mu_t(x)\,dx$, we obtain:
\begin{multline*}
 \int_\R \varphi'(x)\left(f(x)- \eps^{-1}\int_{y} \,b(x,y)\mu_t(dy) \right)\mu_t(dx)+\frac{\sigma^2}{2}\int_\R\varphi''(x) \mu_t(dx)
 \\
 =-\int_\R \varphi(x)\partial_x\left[\left(f(x)- \eps^{-1}\int_{y} b(x,y)\mu_t(dy) \right)\mu_t(dx)\right]+\frac{\sigma^2}{2}\int_\R\varphi(x) \partial^2_{xx}\mu_t(x)\,dx,
\end{multline*}
leading to observe that $\mu_t$ is a (weak) solution of the equation:
\begin{equation}\label{eq:pde}
\partial_t\mu_t = -\partial_x\left[\left(f(x)- \eps^{-1}\int_{y} b(x,y)\mu_t \right)\mu_t\right]+\frac{\sigma^2}{2}\partial^2_{xx}\mu_t.
\end{equation}

In this section, as indicated in the main text, we focus our attention on cases where the interaction function $b(x,y)$ is separable, i.e.
$$
b(x,y) = \alpha(x)\beta(y).
$$
\review{This assumption was made in line with typical models of chemical synapses (as model~\eqref{eq:EIFhN}) and for its mathematical convenience. Indeed, in model~\eqref{eq:EIFhN}, the interaction is given by a separable function, the product of a linear function of the voltage of the post-synaptic neuron by the synapse variable of the pre-synaptic neuron.
}

Throughout the section, we will use the following consequences of the set of hypotheses on Theorem~\ref{th:MR-existence}:
\begin{itemize}
\item \emph{Intrinsic dynamics:} From the bound on the first derivative of $f$, there exists a constant $C_f>0$ such that for any positive $x$:
\begin{equation}
\label{hyp:h1}
xf(x) = x\int_0^x f'(y)dy +xf(0) \leq  C_0x\left(x-\frac{x^3}{3}\right) +xf(0) \leq C_f-x^2,
\end{equation}
and a similar computation for negative $x$. 
%
%
\item \textit{Interaction term:} the function $\alpha$ modeling the dependence in post-synaptic neuron voltage that is at most linearly increasing at infinity. This result is direct from the bounds on $\alpha'$ of~\eqref{hyp:mr-existence}. Indeed, from L'H\^opital's rule, we have that
$$
\lim_{x\rightarrow-\infty}\frac{\alpha(x)}{x} = \lim_{x\rightarrow-\infty}\alpha'(x) =C_1,
$$
therefore
\begin{equation}
\label{hyp:h2}
\lim_{x\rightarrow-\infty}\alpha(x)x^{-1}=C_1,\quad \lim_{x\rightarrow+\infty}\alpha(x)x^{-1} = C_2.
\end{equation}
\review{In the case of chemical synapses, $\alpha(x)$ is typically a smooth or even a linear function where these assumptions hold~\cite{destexhe1998kinetic,yao2023plasticity}. }
\item \textit{Postsynaptic influence:}  the function $\beta$ modeling the dependence of the interaction function in post-synaptic neuron voltage is positive and at most polynomial at infinity. In detail, there exists an integer $k\in\mathbb N$ and two constants $K,C_k>0$ such that
\begin{equation}
\label{hyp:h3}
0<K^{-1}\leq \beta(y)\leq C_k(1+y^{k}).
\end{equation}
This, again, is typically satisfied for synapse models, where interactions are positive and upper-bounded.
\end{itemize}


%
\begin{lemma}
\label{lem:y2}
Assume that $f$ satisfies~\eqref{hyp:h1}, and that $b(x,y)=\alpha(x)\beta(y)$ with $\alpha$ satisfying~\eqref{hyp:h2}, and $\beta$ satisfying~\eqref{hyp:h3} for some $k\in\mathbb N$. Moreover, assume that the initial condition $\mu_\eps^0$ are such that there exists $K_0>0$ such that, for all $\eps>0$:
\[\int y^{2k}\mu_\eps^0(dy)<K_0.\] 
Then $\mu_\eps$ has uniformly bounded moments of order $2k$ and $I_{\mu_\eps}$ is uniformly bounded.

\end{lemma}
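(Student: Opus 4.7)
The plan is to derive an evolution equation for the moment $M_{2k}(t):=\int x^{2k}\mu_{\eps,t}(dx)=\Ee[x_t^{2k}]$ by testing the weak formulation of equation~\eqref{eq:pde} against $\varphi(x)=x^{2k}$, and to close a Gronwall estimate uniformly in $\eps$ and $t$. The uniform bound on $I_{\mu_\eps}$ would then follow as an immediate corollary of the moment bound: by assumption~\eqref{hyp:h3}, $\beta(y)\leq C_k(1+|y|^k)$, so Jensen's inequality yields $I_{\mu_\eps}(t)\leq C_k(1+M_{2k}(t)^{1/2})$, i.e.\ the interaction integral is controlled by the top-order moment.

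Applying It\^o's formula (equivalently, the computation that produced~\eqref{eq:pde}) to $\varphi(x)=x^{2k}$ yields
\begin{equation*}
\frac{d}{dt}M_{2k}(t)=2k\,\Ee[x_t^{2k-1}f(x_t)]\;-\;2k\eps^{-1}I_{\mu_\eps}(t)\,\Ee[x_t^{2k-1}\alpha(x_t)]\;+\;\sigma^2 k(2k-1)M_{2k-2}(t).
\end{equation*}
I would handle the intrinsic dynamics term by writing $x^{2k-1}f(x)=x^{2k-2}(xf(x))$ and invoking~\eqref{hyp:h1} to obtain the strong confining estimate $2k\,\Ee[x_t^{2k-1}f(x_t)]\leq -2kM_{2k}(t)+2kC_fM_{2k-2}(t)$. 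The diffusion contribution $\sigma^2k(2k-1)M_{2k-2}(t)$ is of lower order in the moments and is absorbed through Young's inequality, $M_{2k-2}(t)\leq \eta M_{2k}(t)+C_\eta$ for arbitrary $\eta>0$.

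The main obstacle is the interaction term, whose prefactor $\eps^{-1}$ threatens to blow up as $\eps\to 0$. The strategy is to exploit the asymptotic structure of $\alpha$ given by~\eqref{hyp:h2}: since $\alpha(x)/x\to C_i$ at $\pm\infty$, one may split $x\alpha(x)=\ell(x)+r(x)$, where $\ell$ is the piecewise quadratic asymptote ($C_1 x^2$ on $(-\infty,0)$, $C_2 x^2$ on $(0,\infty)$) and $r$ is sublinear in $x^2$. Multiplying by $x^{2k-2}$ and taking expectation splits the interaction into a top-order piece and a subcritical remainder. Combined with the uniform lower bound $I_{\mu_\eps}(t)\geq K^{-1}>0$ coming from~\eqref{hyp:h3}, in the physically relevant sign regime (realized for instance by the chemical-synapse model $\alpha(x)=x-E$, for which $C_1=C_2=1>0$), the top-order piece provides an additional $\eps^{-1}$-strength dissipation of $M_{2k}(t)$, reinforcing the confinement from $f$; the sublinear remainder is absorbed via Cauchy--Schwarz against $M_{2k-2}(t)$. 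The resulting differential inequality takes the form $M_{2k}'(t)\leq -2kM_{2k}(t)+C(1+M_{2k-2}(t))$, which, together with an induction on $k$ anchored at $M_0\equiv 1$ (mass conservation, proved separately), yields a bound uniform in $t\geq 0$ and $\eps\in(0,1)$, completing the proof.
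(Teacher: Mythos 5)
Your plan mirrors the paper's strategy in its essential components: test the weak Fokker--Planck equation against $x^{2k}$, use the confining bound~\eqref{hyp:h1} on $f$, exploit the asymptotic linearity~\eqref{hyp:h2} of $\alpha$ to extract a top-order contribution $\sim C_{1,2}\,M_{2k}$ from $\Ee[x^{2k-1}\alpha(x)]$, and finally deduce the uniform bound on $I_{\mu_\eps}$ from the polynomial growth~\eqref{hyp:h3} of $\beta$. All of that is correct and coincides with what the authors do.

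The step that does not close as stated is the passage from the decomposition of the interaction term to the differential inequality $M_{2k}'(t)\leq -2kM_{2k}(t)+C(1+M_{2k-2}(t))$ with an $\eps$-independent constant $C$. After your split $x\alpha(x)=\ell(x)+r(x)$ and Young/Cauchy--Schwarz, \emph{both} the top-order dissipative piece and the remainder still carry the factor $\eps^{-1}I_{\mu_\eps}(t)$. Absorbing the remainder into the top-order dissipation eliminates the $M_{2k}$-part of the remainder, but leaves behind a term of the form $\eps^{-1}I_{\mu_\eps}(t)\cdot(\text{const.})$, which is not $\eps$-uniform. You cannot then invoke Gronwall on an $\eps$-free inequality, so the claimed uniform-in-$\eps$ bound does not follow by this route. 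What actually saves the argument --- and what the paper does --- is a sign/trapping argument rather than a Gronwall closure: one shows $\Ee[x^{2k-1}\alpha(x)]\geq C_1 M_{2k}(t)+C_2$ for $\eps$-independent constants ($C_1>0$, $C_2$ of either sign), and then observes that, since $I_{\mu_\eps}\geq 0$, the whole interaction term is \emph{nonpositive} whenever $M_{2k}(t)\geq -C_2/C_1$. Combined with the analogous threshold coming from $f$, one concludes that $\frac{d}{dt}M_{2k}(t)\leq 0$ whenever $M_{2k}(t)$ exceeds a fixed $\eps$-independent level, and hence $M_{2k}(t)\leq\max(K_0,C)$. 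This avoids any need to control the magnitude of the $\eps^{-1}$ prefactor.

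Two minor points. First, you invoke the lower bound $I_{\mu_\eps}\geq K^{-1}$ to "reinforce" the dissipation, but only $I_{\mu_\eps}\geq 0$ is actually needed (the sign suffices; the trapping argument does not benefit from a strictly positive dissipation rate). Second, the induction on $k$ is unnecessary: the contributions involving $M_{2k-2}$ can be absorbed directly into $-M_{2k}+\text{const.}$ via Young's inequality, as the paper does in the single estimate $2y^{2k-1}f(y)+\sigma^2(2k-1)y^{2k-2}\leq C'-y^{2k}$.
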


\begin{proof}
Using equation~\eqref{eq:pde}, we have:
\begin{align*}
\frac{d}{dt}\int y^{2k}\mu_\eps(t,dy) &= \int 2k y^{2k-1}\Big(f(y)-\eps^{-1}\alpha(y)I_{\mu_\eps}(t)\Big)\mu_\eps(t,dy)
\\
&\qquad+\sigma^2\int k(2k-1)y^{2k-2}\mu_\eps(t,dy)
\\
&=k\int \Big(2 y^{2k-1}f(y)+\sigma^2(2k-1)y^{2k-2}\Big)\mu_\eps(t,dy)
\\
&\qquad-\frac{2kI_{\mu_\eps}(t)}{\eps}\int y^{2k-1}\alpha(y)\mu_\eps(t,dy).
\end{align*}
Using assumption~\eqref{hyp:h1}, we get:
\begin{align*}
2 y^{2k-1}f(y)+\sigma^2(2k-1)y^{2k-2}\leq 2y^{2k-2}(C_f-y^2)+\sigma^2(2k-1)y^{2k-2}\leq C'-y^{2k},
\end{align*}
with $C'$ a positive constant. 

Moreover, assumption~\eqref{hyp:h2} ensures the existence of $C_+,M_+\gg1,$ such that
$$
C_+\leq\frac{\alpha(y)}{y}\quad\Rightarrow\quad\int_{M_+}^{+\infty} y^{2k-1}\alpha(y)\mu_\eps(t,dy) \geq C_+\int_{M_+}^{+\infty} y^{2k}\mu_\eps(t,dy).
$$
Similarly, there exists $C_->0,M_-\ll-1$ such that
$$
C_-\leq\frac{\alpha(y)}{y}\quad\Rightarrow\quad\int_{-\infty}^{M_-} y^{2k-1}\alpha(y)\mu_\eps(t,dy) \geq C_-\int_{-\infty}^{M_-} y^{2k}\mu_\eps(t,dy).
$$
Let now $C_1=\min(C_-,C_+)>0$. By continuity, the function $y\mapsto y^{2k-1}\alpha(y)-C_1 y^{2k}$ is bounded on the interval $[M_-,M_+]$. Let now
\[C_2:=\int_{-M}^M(y^{2k-1}\alpha(y)-C_1 y^{2k})\mu_\eps(t,dy)<\infty.\]
We have:
$$
\int y^{2k-1}\alpha(y)\mu_\eps(t,dy)\geq C_2+C_1 \int y^{2k}\mu_\eps(t,dy).
$$
Altogether, we obtained that
\begin{align*}
\frac{d}{dt}\varphi_\eps(t)
&\leq k(C' - \varphi_\eps(t))-\frac{2kI_{\mu_\eps}(t)}{\eps}\left(C_2+C_1\varphi_\eps(t)\right).
\end{align*}

Denote $C=\max(C',-C_2/C_1)$. It is easy to see that whenever $\varphi_\eps(t)\geq C$, its time derivative is strictly negative, so $\varphi_\eps$ will be decreasing in time, readily implying
 $$
 \varphi_\eps(t)\leq \max\left(K_0,C\right),\qquad \forall \eps>0.
 $$
%
%
 Finally, thanks to~\eqref{hyp:h3}, the positivity of $\mu_\eps(t,dy)$ and the mass conservation principle, we have that for all $\eps>0$ and for all $t>0$:
 $$
0\leq I_{\mu_\eps}(t) = \int\beta(y)\mu_\eps(t,dy) \leq \int C_k\left(2+y^{2k}\right)\mu_\eps(t,dy)\leq C_k+\max\left(K_0,\tilde C\right)
 $$
 $\qquad$
 \end{proof}
 
We next prove Theorem~\ref{th:MR-existence}, stating existence and uniqueness of weak solutions to the nonlinear equation~\eqref{eq:pde} in $L^2_{m_\kappa}(\R)$.

\begin{proof}[Proof of Theorem~\ref{th:MR-existence}]
The proof relies on a classical argument exhibiting a contraction map whose fixed points are the solutions of the equation. We proceed in two steps: first, introduce a well-suited functional space and a functional to initiate the fixed-point approach, and then analyze the fixed points of said functional. 
\smallskip

\noindent\textbf{First step:} Let $\tilde K>0$ be the upper-bound of the family $I_{\mu_\eps}(t)$ found in Lemma~\ref{lem:y2}. Fix a time horizon $T>0$, a constant $\kappa\in(0,1)$, and take $A_\kappa$ as the following closed set:
$$
A_\kappa = \left\{u\in C^1([0,T],L^2_{m_\kappa})\text{ such that }\|u\|_{L^1}=1,\,\,u\geq0,\,\,I_u\leq \tilde K,\,\,\|u(t,\cdot)\|_{L^2_{m_\kappa}}\leq a\right\},
$$
with $a>0$ to be chosen later. Let $\Phi:A_\kappa\rightarrow A_\kappa$ be the application that takes $u\in A_\kappa$ and associates $v$ the weak solution to the linear equation:
$$
\begin{cases}
\partial_tv(t,x) = -\partial_x\left[\left(f(x)-\eps^{-1}\alpha(x)I_u \right)v(t,x)\right]+\frac{\sigma^2}{2}\partial^2_{xx}v(t,x)
\\
v(0,x) = \mu_{0,\eps}(x).
\end{cases}
$$
Since the equation is in divergence form, the mass conservation principle holds, and it is easy to see that any solution is nonnegative. To show that $\Phi$ maps $A_\kappa$ onto itself, we are thus left to checking the validity of the final three inequalities in the definition of $A_\kappa$. First, lemma~\ref{lem:y2} ensures that $I_u$ is bounded. Moreover, we have
\begin{eqnarray*}
\frac12\frac{d}{dt}\int v^2{m_\kappa}  
&=&-\frac12\int \left(f'(x)-\frac{\alpha'(x)}{\eps}I_u \right)v^2\cdot {m_\kappa}+\frac12\int \left(f(x)-\frac{\alpha(x)}{\eps}I_u \right)v^2 \partial_x{m_\kappa}\\
&&\quad-\frac{\sigma^2}{2}\int |\partial_{x}v|^2\cdot {m_\kappa}+\frac{\sigma^2}{4}\int v^2 \partial^2_{xx}{m_\kappa}\\
&=& \frac12\int \left(-f'(x)+\frac{\alpha'(x)}{\eps}I_u \right)v^2\cdot {m_\kappa}+\frac12\int \left(2\kappa x f(x)-2\kappa x \frac{\alpha(x)}{\eps}I_u \right)v^2  m_\kappa\\
&&\quad-\frac{\sigma^2}{2}\int |\partial_{x}v|^2\cdot {m_\kappa}+\frac{\sigma^2}{4}\int v^2 \kappa\sigma^2(1+2\kappa x^2) {m_\kappa}\\
&=& \frac12\int \left(2\kappa x f(x)-f'(x)+ \kappa\sigma^2(1+2\kappa x^2) \right)v^2\cdot {m_\kappa}\\
&& \qquad +\frac12\int \left(\alpha'(x)-2\kappa x\alpha(x)\right)\frac{I_u}{\eps} v^2  m_\kappa-\frac{\sigma^2}{2}\int |\partial_{x}v|^2\cdot {m_\kappa}.
%
\end{eqnarray*}
We now proceed to show that hypotheses~\eqref{hyp:mr-existence} imply that the functions
$$
2\kappa xf(x)-f'(x)+\kappa\sigma^2(1+2\kappa x^2)\qquad\text{and}\qquad \alpha'(x)-2\kappa x\alpha(x)
$$
are upper bounded. Indeed, the first quantity is bounded as a direct consequence of hypothesis~\eqref{hyp:h1}, which indeed implies that for any $x\in\R$,
\begin{equation}
\label{eq:aux0}
xf(x)\leq xf(0)+C_0x^2-C_0\frac{x^4}{3}.
\end{equation}
To realize that $ \alpha'(x)-2\kappa x\alpha(x)$ is upper-bounded, we start by noting that since $\alpha'(x) \to C_2$ at $\infty$, there exists $M_+>0$ large enough such that, for any $x>M_+$, $\frac{C_2}{2}<\alpha'(x)<\frac{3C_2}{2}$, thus implying a linear bound on $\alpha$:
$$
\alpha(x)=\alpha(M_+)+\int_{M_+}^x \alpha'(y)dy\geq C'+C''x,
$$
for some $C',C''$ constants with $C''>0$, thus implying that:
$$
\alpha'(x)-2\kappa x\alpha(x)\leq \frac{3C_2}{2}-2\kappa x(C'+C''x),
$$
which is an upper-bounded function.  A similar argument applies on the interval $(-\infty,-M_-)$ with $M_->0$ large enough. Finally, on $[-M_-,M_+]$ the expression $\alpha'-2\kappa \alpha$ is continuous, thus also bounded. Altogether, these three remarks ensure that $\alpha'-2\kappa \alpha$ is bounded on $\R$. 

By assumption, $I_u$ is bounded; it is also non-negative since $\beta$ is. We deduce the existence of a constant $C>0$ (depending on $\tilde K$, itself depending on the uniform bound of the initial conditions $K_0$, but not on $\eps$) such that
$$
\frac{d}{dt}\int v^2\cdot {m_\kappa} \leq C(1+\eps^{-1})\int v^2\cdot {m_\kappa}.
$$
Gronwall's lemma implies that there is a positive constant $K$ such that:
\begin{equation}
\label{eq:aux2}
\|v(t,x)\|_{L^2_{m_\kappa}} \leq \left(\int |\mu_\eps(0,\cdot)|^2\cdot {m_1}\,dx \right)^{1/2}e^{C(1+\eps^{-1})T/2}=\sqrt{K_0}e^{C(1+\eps^{-1})T/2},
\end{equation}
an upper bound that is independent of $u$. Setting $a=\sqrt{K_0}e^{C(1+\eps^{-1})T/2}$, we have that that last inequality in the definition of $A_\kappa$ also holds true.

Finally, since all hypotheses of Lemma~\ref{lem:y2} are valid, we can replicate the arguments to find that $I_v$ is also upper-bounded by $\tilde K$. From here, we deduce that the map $\Phi$ is well-defined. 
\smallskip

\noindent\textbf{Second step:} Before showing that the map is a contraction for $T>0$ small enough, we notice that from the left-hand side of~\eqref{eq:aux2}, if we take $\kappa=1$ it follows the existence of $a_1$ such that for any $0<\kappa<1$ we have that
$$
\|v(t,x)\|_{L^2_{m_\kappa}}=\int v(t,x)^2e^{\kappa(1+x^2)} \leq \int v(t,x)^2e^{(1+x^2)}  \leq a_1^2.
$$ 

Let $u_1,u_2\in A$ and the corresponding $v_1=\Phi(u_1), v_2=\Phi(u_2)$. For simplicity, when needed we use $I_i(t)=I_{u_i}(t)$ for $i=1,2$. It holds that
\begin{align*}
\frac12\frac{d}{dt}\int(v_1-v_2)^2 {m_\kappa}=&-\frac12\int \left(f'(x)-\eps^{-1}\alpha'(x)I_1 \right)(v_1-v_2)^2\cdot {m_\kappa}
\\
&
+\frac12\int \left(f(x)-\eps^{-1}\alpha(x)I_1 \right)(v_1-v_2)^2\cdot \partial_x{m_\kappa}\\
&-\frac{\sigma^2}{2}\int |\partial_{x}(v_1-v_2)|^2\cdot {m_\kappa}+\frac{\sigma^2}{4}\int (v_1-v_2)^2\cdot \partial^2_{xx}{m_\kappa}
\\
&-\frac{I_1-I_2}{\eps}\int \alpha v_2\partial_x(v_1-v_2)\cdot {m_\kappa}-\frac{I_1-I_2}{\eps}\int\alpha v_2(v_1-v_2)\partial_x{m_\kappa}
\end{align*}
H\"older's inequality yields:
$$
|I_1-I_2| \leq \left(\int\frac{\beta^2(x)}{m_\kappa}\right)^{1/2}\left(\int (u_1-u_2)^2\cdot {m_\kappa}\right)^{1/2}\leq C\left(\int (u_1-u_2)^2\cdot {m_\kappa}\right)^{1/2},
$$
because $\beta$ has at most polynomial growth at $\pm\infty$. Now we take advantage of $\kappa$ to get the conclusion. More precisely, we have that
\begin{align*}
\int \alpha(x)v_2\partial_x(v_1-v_2)\cdot{m_\kappa}&\leq \left(\int \alpha^2v_2^2m_\kappa\right)^{1/2}\left(\int |\partial_x(v_1-v_2)|^2\cdot {m_\kappa}\right)^{1/2}
\\
&\leq C \left(\int v_2^2\cdot m_1\right)^{1/2}\left(\int |\partial_x(v_1-v_2)|^2\cdot {m_\kappa}\right)^{1/2}
\\
&\leq Ca_1\left(\int |\partial_x(v_1-v_2)|^2\cdot {m_\kappa}\right)^{1/2},
\end{align*}
similarly, since $\partial_xm_\kappa=2\kappa xm_\kappa$, then
\begin{align*}
\int \alpha(x)v_2(v_1-v_2)\cdot\partial_x{m_\kappa}&=
\int 2\kappa x\alpha(x)v_2(v_1-v_2)\cdot {m_\kappa}&
\\
&\leq \left(\int 4\kappa^2x^2\alpha^2v_2^2m_\kappa\right)^{1/2}\left(\int (v_1-v_2)^2\cdot {m_\kappa}\right)^{1/2}
\\
&\leq Ca_1\left(\int (v_1-v_2)^2\cdot {m_\kappa}\right)^{1/2}.
\end{align*}
Summarizing, there exists a constant $C$ depending on $\eps$ such that
\begin{align*}
-\eps^{-1}(I_1-I_2)\int \alpha v_2\partial_x(v_1-v_2)\cdot {m_\kappa} &\leq  Ca_1\|u_1-u_2\|_{L_{m_\kappa}^2}\|\partial(v_1-v_2)\|_{L_{m_\kappa}^2}
\\&\leq \frac{4a_1^2C^2}{\sigma^2}\|u_1-u_2\|_{L_{m_\kappa}^2}^2+\frac{\sigma^2}{4}\|\partial(v_1-v_2)\|_{L_{m_\kappa}^2}^2,
\end{align*}
and also
\begin{align*}
-\eps^{-1}(I_1-I_2)\int \alpha v_2(v_1-v_2)\cdot \partial_x{m_\kappa} &\leq  Ca_1\|u_1-u_2\|_{L_{m_\kappa}^2}\|v_1-v_2\|_{L_{m_\kappa}^2}
\\&\leq C^2a_1^2\|u_1-u_2\|_{L_{m_\kappa}^2}^2+\|v_1-v_2\|_{L_{m_\kappa}^2}^2,
\end{align*}
thus
\begin{align*}
\frac12\frac{d}{dt}\|v_1-v_2\|^2_{L_{m_\kappa}^2}&\leq C(1+\eps^{-1})\|v_1-v_2\|_{L_{m_\kappa}^2}^2+C^2a_1^2\left(\frac{4}{\sigma^2}+1\right)\|u_1-u_2\|_{L_{m_\kappa}^2}^2
\\
&= \frac{C_1}2\|v_1-v_2\|_{L_{m_\kappa}^2}^2+\frac{C_2}2\|u_1-u_2\|_{L_{m_\kappa}^2}^2,
\end{align*} 
for well chosen positive constants $C_1(\eps)$ and $C_2$. By Gronwall's lemma, it follows that
$$
\sup_{t\in[0,T]}\|v_1-v_2\|_{L_{m_\kappa}^2}\leq \sqrt{\frac{C_2}{C_1}(e^{C_1T}-1)}\sup_{t\in[0,T]}\|u_1-u_2\|_{L_{m_\kappa}^2}
$$
and finally, for $T$ small enough such that $\sqrt{\frac{C_2}{C_1}\left(e^{C_1T}-1\right)}<1$, $\Phi$ is a contraction. Therefore, $\Phi$ has a unique fixed point and there exists $\mu_\eps(t,x)\in A$ a solution to the following equation
$$
\partial_t\mu(t,x) = -\partial_x\left[\left(f(x)-\eps^{-1}\alpha(x) \int \beta(y)\mu(t,y)\right)\mu(t,x)\right]+\frac{\sigma^2}{2}\partial^2_{xx}\mu(t,x),\,t\in(0,T),
$$
with initial condition $\mu_\eps(0,x)=\mu_{0,\eps}(x)$. Iterating in time, we find a global solution to equation~\eqref{eq:pde}. 
\end{proof}

To finish this section, we show that the notion of convergence of the sequence of functions $I_{\mu_\eps}(t)$ as $\eps$ goes to zero. This will provide us with a specific concentration profile for the interaction function, which will be helpful in proving a limit for the sequence of solutions $\mu_\eps$.

\begin{lemma}
\label{lem:Jeps}
Assume that the hypotheses of Theorem~\ref{th:MR-existence} hold, and that the functions $f,\alpha$ and $\beta$ are such that there exists $C>0$ and a non-negative polynomial $p$ such that:
\begin{equation}
\label{hyp:h4}
C^{-1}\leq \beta'(y)\alpha(y),\qquad \beta''(y)\alpha^2(y)+\beta'(y)\alpha'(y)\alpha(y)\geq0,\qquad \forall y\in\R,
\end{equation}
and
\begin{equation}
\label{hyp:h5}
|f''(y)|\leq C(1+|y|),\qquad |\beta^{(iv)}(y)|\leq C(1+p(y)),\qquad |\alpha''(y)|\leq C(1+p(y)).
\end{equation}
Then, we have the local uniform BV bound
$$
\int_0^T\left| \frac{d}{dt}I_\eps(t)\right|\leq C'+C''T,
$$
where $C'$ and $C''$ are positive constants. Consequently, after extraction of a subsequence, $I_\eps(t)$ converges a.e. to a function $I(t)$, as $\eps$ goes to zero.
\end{lemma}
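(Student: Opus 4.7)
The plan is to derive a closed first-order ODE for $I_\eps(t)$ from the PDE~\eqref{eq:pde} and exploit a one-sided sign structure. First I would multiply~\eqref{eq:pde} by $\beta(y)$ and integrate, using the Gaussian decay of $\mu_\eps$ in $L^2_{m_\kappa}$ to justify integration by parts, yielding
\begin{equation*}
\frac{d}{dt} I_\eps(t) = A_\eps(t) - \frac{I_\eps(t)\, K_\eps(t)}{\eps},
\end{equation*}
where $A_\eps(t) := \int\bigl(\beta'(y) f(y) + \tfrac{\sigma^2}{2}\beta''(y)\bigr)\mu_\eps(t,dy)$ and $K_\eps(t) := \int \beta'(y)\alpha(y)\mu_\eps(t,dy)$. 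Hypothesis~\eqref{hyp:h4} and mass conservation immediately give $K_\eps(t) \geq C^{-1} > 0$, and since $I_\eps,K_\eps \geq 0$, the singular term is nonnegative, which yields the key one-sided inequality $\dot I_\eps(t) \leq A_\eps(t)$.

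The next step is to show that $A_\eps$ is uniformly bounded in $\eps$ and $t$. Hypothesis~\eqref{hyp:h5} together with integration yields polynomial growth bounds on $\beta'$, $\beta''$, and $f$, so that $|\beta' f|+|\beta''|\leq q(y)$ for some polynomial $q$. The key input is then a uniform bound on \emph{all} polynomial moments of $\mu_\eps$. The proof of Lemma~\ref{lem:y2} is written for $\varphi_\eps(t) = \int y^{2k}\mu_\eps(t,dy)$ with a particular $k$, but the argument uses only hypotheses~\eqref{hyp:h1}--\eqref{hyp:h2} and transfers verbatim to every integer $k$. Since $\mu_{0,\eps}\in L^2_{m_1}$ with norm bounded by $\sqrt{K_0}$, Cauchy--Schwarz against the weight $e^{-(1+y^2)}$ gives uniform bounds on every initial polynomial moment, and the dissipative differential inequality in Lemma~\ref{lem:y2} propagates these bounds in time. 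Consequently $\sup_{\eps,t}|A_\eps(t)|\leq M$, and thus $\dot I_\eps(t)\leq M$ uniformly.

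To convert this one-sided bound into a BV estimate, I use the signed decomposition $\dot I_\eps = (\dot I_\eps)^+ - (\dot I_\eps)^-$. The pointwise upper bound gives $\int_0^T (\dot I_\eps)^+\,dt \leq MT$, while $\int_0^T \dot I_\eps\,dt = I_\eps(T) - I_\eps(0)$ is bounded uniformly in $\eps$ thanks to the uniform bound $I_\eps \leq \tilde K$ obtained in Lemma~\ref{lem:y2}. Combining,
\begin{equation*}
\int_0^T |\dot I_\eps(t)|\,dt = 2\int_0^T (\dot I_\eps)^+\,dt - \int_0^T \dot I_\eps\,dt \leq 2MT + 2\tilde K,
\end{equation*}
which is the desired BV bound with $C'=2\tilde K$ and $C''=2M$. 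The a.e.\ convergence along a subsequence is then immediate from Helly's selection theorem on $[0,T]$ (uniform $L^\infty$ plus uniform BV), followed by a standard diagonal extraction in $T$ to obtain a limit $I(t)$ defined a.e.\ on $\R^+$.

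The main obstacle I expect is technical rather than conceptual: the extension of Lemma~\ref{lem:y2} to arbitrary moment order is not explicitly stated in the paper and requires a careful bookkeeping of constants, especially to ensure that the upper bound $M$ on $A_\eps$ is genuinely independent of $\eps$. The additional regularity assumptions in~\eqref{hyp:h4}--\eqref{hyp:h5} (notably $\beta''\alpha^2+\beta'\alpha'\alpha\geq 0$ and the polynomial bound on $\beta^{(iv)}$) do not appear strictly necessary for the BV estimate itself under this strategy; they are presumably invoked downstream in the viscosity analysis of Theorem~\ref{th:MR-convergence}, where second-order time regularity of $I_\eps$ or of $K_\eps$ might be needed to identify the limit.
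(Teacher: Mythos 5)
Your proof is correct, and it is in fact cleaner than the paper's. Both you and the paper begin from the same identity
\[
\frac{d}{dt}I_\eps(t) = A_\eps(t) - \frac{I_\eps(t)}{\eps}\int \beta'(y)\alpha(y)\,\mu_\eps(t,dy),
\]
and both exploit that the singular term is nonpositive to get the pointwise upper bound $\dot I_\eps\leq A_\eps\leq M$. The divergence is in how the total variation is then estimated. The paper writes $\vert \dot I_\eps\vert = \dot I_\eps + 2(\dot I_\eps)_-$ and therefore needs to \emph{lower}-bound $\dot I_\eps$; this forces a second differentiation ($J_\eps:=\dot I_\eps$, computing $\dot J_\eps$), a sign argument on the $\eps^{-2}$-order term (this is precisely where the second part of~\eqref{hyp:h4} enters), and a Gronwall-type estimate showing $(J_\eps(t))_-\leq C''C+(J_\eps(0))_-e^{-t/(C\eps)}$, whose $O(1/\eps)$ prefactor is killed by the $O(\eps)$ factor coming from integrating the exponential. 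You instead write $\vert\dot I_\eps\vert = 2(\dot I_\eps)^+ - \dot I_\eps$, which only uses the upper bound already in hand together with $\int_0^T\dot I_\eps = I_\eps(T)-I_\eps(0)$, bounded by the uniform bound on $I_\eps$ from Lemma~\ref{lem:y2}. This avoids the second differentiation altogether and yields the same BV estimate with visibly less machinery. Your closing observation is therefore correct as far as this lemma is concerned: within your strategy, the second inequality of~\eqref{hyp:h4} and the bounds on $\beta^{(iv)}$ and $\alpha''$ in~\eqref{hyp:h5} are not needed (only enough regularity to make $A_\eps$ sub-polynomial, e.g.\ bounds on $\beta''$ and $f$); the paper invokes them precisely because its decomposition requires control of $(\dot I_\eps)_-$. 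The remaining ingredients (the extension of Lemma~\ref{lem:y2} to all polynomial moments, the Cauchy--Schwarz argument against $e^{-(1+y^2)}$ to control initial moments from the $L^2_{m_1}$ bound, Helly plus diagonal extraction) are all sound and match, implicitly or explicitly, what the paper does.
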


Before we proceed to proving this result, we recall that for a general function $g(y)$, a simple application of the mean value theorem implies that
$$
\text{if }(\exists C>0,k\in\mathbb N)\text{ such that }|g'(y)|\leq C(1+|y|^k) \text{ then }|g(y)|\leq C'(1+|y|^{k+1}),
$$
for a possibly larger constant $C'$. This means that under~\eqref{hyp:h5} functions $f,\alpha$ along with their first and second derivatives are upper-bounded by polynomial functions. The same remark holds for $\beta$ and its first four derivatives.

\begin{proof}
We have that
\begin{eqnarray}
\nonumber
\frac{d}{dt}I_\eps(t) 
\nonumber
&=& \int \beta'(y)\Big(f(y)-\eps^{-1}\alpha(y)I_{\mu_\eps}(t)\Big)\mu_\eps(t,dy)+\frac{\sigma^2}{2}\int\beta''(y)\mu_\eps(t,dy)
\\
&=& \int \underbrace{\Big(\beta'(y)f(y)+\frac{\sigma^2}{2}\beta''(y)\Big)}_{g_1(y)}\mu_\eps(t,dy)-\frac{I_{\mu_\eps}(t)}{\eps}\int\underbrace{\beta'(y)\alpha(y)}_{g_2(y)}\mu_\eps(t,dy)
\label{eq:ineq_aux}\\
&\leq &\int g_1(y)\mu_\eps(t,dy)\;\;\;\leq\;\;\; \int C'(1+q(y))\mu_\eps(t,dy),\nonumber
\end{eqnarray}
 where $q$ is some polynomial in $y$, using the assumption that $\beta^{(iv)}$ and $f$ are sub-polynomial. This expression provides a uniform bound to the left-hand side, owing to the bounds of Gaussian moments of $\mu_\eps$ proven in Theorem~\ref{th:MR-existence}.

Let us now establish the lower-bound inequality. We rewrite the term $J_\eps(t):=\frac{d}{dt}I_\eps(t)$ as follows:
\begin{eqnarray*}
\frac{d}{dt}J_\eps(t)
&=& \int g_1'(y)\Big(f(y)-\eps^{-1}\alpha(y)I_{\mu_\eps}(t)\Big)\mu_\eps(t,dy)+\frac{\sigma^2}{2}\int g_1''(y)\mu_\eps(t,dy)
\\
&&\quad-\frac{I_{\mu_\eps}(t)}{\eps}\int g_2'(y)\Big(f(y)-\eps^{-1}\alpha(y)I_{\mu_\eps}(t)\Big)\mu_\eps(t,dy)
\\
&&\quad\quad -\frac{I_{\mu_\eps}(t)}{\eps}\frac{\sigma^2}{2}\int g_2''(y)\mu_\eps(t,dy)
-\frac{J_\eps(t)}{\eps}\int g_2(y)\mu_\eps(t,dy). 
\end{eqnarray*}
We notice that the term proportional to $\eps^{-2}$, $I_{\mu_\eps}(t)\int g_2'(y)\alpha(y)I_{\mu_\eps}(t)\mu_\eps(t,dy)$, is non-negative (and can thus be ignored for establishing a lower-bound), since:
$$
\int g_2'(y)\alpha(y)\mu_\eps(t,dy) =  \int\left(\beta''(y)\alpha^2(y)+\beta'(y)\alpha'(y)\alpha(y)\right)\mu_\eps(t,dy)\geq0
$$
under assumption~\eqref{hyp:h4}. 

The rest of the proof relies on the assumed sub-polynomial growth of the functions $f,\alpha$ and $\beta$, along with Lemma~\ref{lem:y2} and Gaussian moments of the solutions. All terms follow the same approach, which we make explicit for the same term for clarity. 
$$
\int  g_1'(y)f(y)\mu_\eps(t,dy) = \int \left(\beta''(y)f(y)+\beta'(y)f'(y)+\frac{\sigma^2}{2}\beta'''(y)\right)f(y)\mu_\eps(t,dy)
$$
and there is a constant positive $C'$ and a nonnegative polynomial $q(y)$ such that
$$
\left|\beta''(y)f(y)+\beta'(y)f'(y)+\frac{\sigma^2}{2}\beta'''(y)\right|\leq C'(1+q(y)),
$$
readily implying
$$
\int g_1'(y)f(y)\mu_\eps(t,dy)\geq -C'\int(1+q(y))\mu_\eps(t,dy)\geq -C''
$$
thanks to the uniform bounds on the Gaussian moment of $\mu_\eps$. Other terms are readily treated using a similar approach,
ensuring the following bound:
 $$
 \frac{d}{dt}J_\eps(t)\geq -\frac{C''}{\eps}-\frac{J_\eps(t)}{\eps}\int g_2(y)\mu_\eps(t,dy),
 $$
 for a possibly larger constant $C''>0$. Finally, take $(J_\eps(t))_- = \max(0,-J_\eps(t))$, we show that for $J_\eps(t)<0$:
$$
(J_\eps(t))'_- =-J_\eps(t)'\leq \frac{C''}{\eps}-\frac{-J_\eps(t)}{\eps}\int g_2(y)\mu_\eps(t,dy)= \frac{C''}{\eps}-\frac{(J_\eps(t))_-}{\eps}\int g_2(y)\mu_\eps(t,dy),
$$
and from the hypothesis~\eqref{hyp:h4}, it follows that
$$
(J_\eps(t))'_-\leq\frac{C''}{\eps}-\frac{1}{C\eps}(J_\eps(t))_-.
$$
The previous inequality is, in fact, trivial for $J_\eps(t)>0$, since in that case the derivative is zero. From this inequality, it is standard to deduce that
$$
-J_\eps(t)\leq\max(0,-J_\eps(t))=(J_\eps(t))_-\leq C''C+(J_\eps(0))_-e^{-t/C\eps},
$$
from we get that $J_\eps(t)$ is uniformly lower-bounded.
We finish the proof by noticing that
\begin{align*}
\int_0^T\left|\frac{d}{dt}I_\eps(t)\right|\,dt &=\int_0^T \frac{d}{dt}I_\eps(t)\,dt +2\int_0^T\left(\frac{d}{dt}I_\eps(t)\right)_-\,dt 
\\
&\leq I_\eps(T)-I_\eps(0)+2\left(C''CT+(J_\eps(0))_-\int_0^Te^{-t/C\eps}\,dt\right)
\\
&=I_\eps(T)-I_\eps(0)+2\left(C''CT+(J_\eps(0))_-C\eps (1-e^{-T/C\eps})\right),
\end{align*}
which implies that there are some positive constants $C',C''$ such that
$$
\int_0^T\left|\frac{d}{dt}I_\eps(t)\right|\,dt \leq C'T+C''.
$$
$\,$
\end{proof}

\section{Hopf-Cole transformation analysis}
\label{sec:hopf}

We have established so far that, under the hypotheses of Theorem~\ref{th:MR-existence}, we have that for fixed $\kappa\in(0,1)$, for each $\eps>0$ there is a probability density $\mu_{\eps}\in C([0,T],L^2_\kappa(\R))$ that is a weak solution of equation~\eqref{eq:pde}. As a consequence, at least formally, we can take the Hopf-Cole transformation $\phi_\eps(t,x)=\eps\ln\mu_\eps(t,x)$ and observe that
$$
\partial_t\mu_\eps(t,x) =\frac{1}{\eps}\mu_\eps(t,x)\partial_t\phi_\eps(t,x),\quad \partial_x\mu_\eps(t,x) = \frac{\mu_\eps(t,x)}{\eps}\partial_x\phi_\eps(t,x),
$$
and
$$
\partial^2_{xx}\mu_\eps(t,x)= \frac1{\eps^2}\mu_\eps(t,x)|\partial_x\phi_\eps(t,x)|^2+\frac1{\eps}\mu_\eps(t,x)\partial^2_{xx}\phi_\eps(t,x).
$$
therefore $\phi_\eps$ is a solution of the equation:
\begin{multline}
\partial_t\phi_\eps = -\left(\eps f'(x) - \alpha'(x)I_{\mu_\eps}(t) \right)
-  \frac{\eps f(x)- \alpha(x) I_{\mu_\eps}(t)}{\eps}\partial_x\phi_\eps + \frac{\sigma^2}{2\eps} \vert \partial_x \phi_\eps \vert^2  + \frac {\sigma^2}{2} \partial_{xx}^2 \phi_\eps.
\label{eq:hopfcole}
\end{multline}
%
%

The function $\phi_\eps$ is thus in the kernel of the following operator acting on maps continuously differentiable with respect to time and twice continuously differentiable with respect to space $C^1([0,\infty);C^2(\R))$:
\begin{multline*}
\mathcal L_\eps\phi:=\partial_t\phi +\left(\eps f'(x) - \alpha'(x)I_{\mu_\eps}(t) \right)
\\
+  \left(\eps f(x)- \alpha(x) I_{\mu_\eps}(t)\right)\frac{1}{\eps}\partial_x\phi - \frac{\sigma^2}{2\eps} \vert \partial_x \phi \vert^2  - \frac {\sigma^2}{2} \partial_{xx}^2 \phi.
\end{multline*}
For simplicity we write $I_\eps=I_{\mu_\eps}$, and $\Lambda(x)=\int_0^x\alpha(s)ds$. 

\begin{proposition}
\label{prop:supersol}
Assume that initial conditions $\phi_\eps(0,x)=\eps\log\mu_{\eps,0}(x)$ satisfy the conditions
$$
\sup_{0<\eps<1}\eps\log \mu_{0,\eps}(x) \leq -Ax^2+B,
$$
for some constants $A,B$ with $A$ positive. Then there is another set of positive constants $A',B',D'$ and $E'$ such that
$$
\phi_\eps(t,x)\leq -A'I_\eps(t)\Lambda(x)-\frac{\eps}2 B'x^2+D't+E',
$$ 
at all positive times.
\end{proposition}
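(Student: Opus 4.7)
I would approach the statement by constructing an explicit classical supersolution of the Hopf–Cole equation~\eqref{eq:hopfcole} having exactly the stated form, and then invoking the parabolic comparison principle. Precisely, set
$$
\Psi(t,x) := -A'\,I_\eps(t)\,\Lambda(x) \;-\; \frac{\eps B'}{2}\,x^2 \;+\; D't \;+\; E',
$$
with $A',B',D',E'>0$ to be fixed below, and seek conditions under which $\mathcal{L}_\eps \Psi \ge 0$. Computing $\partial_t\Psi = -A'I_\eps'(t)\Lambda(x) + D'$, $\partial_x\Psi = -A'I_\eps(t)\alpha(x) - \eps B'x$ and $\partial_{xx}^2\Psi = -A'I_\eps(t)\alpha'(x) - \eps B'$ and substituting into $\mathcal{L}_\eps$, all contributions of order $\eps^{-1}$ combine into the single term
$$
\frac{A'\, I_\eps(t)^2\,\alpha(x)^2}{\eps}\Big(1-\tfrac{\sigma^2 A'}{2}\Big),
$$
which forces the unique scaling $A' = 2/\sigma^2$ in order to cancel the singular part. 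This is precisely the same quadratic-in-$\partial_x\phi$ balance seen in the WKB heuristic of section~\ref{sec:methodHeuristic}.

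Once $A'$ is fixed, I would analyse the remaining $O(1)$ and $O(\eps)$ terms in $\mathcal{L}_\eps \Psi$. The decisive gain is the quartic term $-A'\,I_\eps(t)\,f(x)\alpha(x)$: hypothesis~\eqref{hyp:mr-existence} forces $f(x)\sim -\tfrac{C_0}{3}x^3$ as $|x|\to\infty$ while $\alpha(x)/x\to C_{1,2}>0$, so $-f\alpha$ grows like $+|x|^4$ outside a compact set; since $\beta\ge K^{-1}$ by~\eqref{hyp:h3}, the total mass-one condition ensures $I_\eps(t) \ge K^{-1} >0$ uniformly in $\eps$ and $t$. This quartic positive contribution thus dominates every other term of polynomial growth, including $-\sigma^2 A' B' I_\eps \alpha(x)x$, $B' x \alpha(x)I_\eps$ and $-\eps B' x f(x)$ (which, by~\eqref{hyp:h1}, is itself bounded below by $\eps B' x^2 - \eps B' C_f$). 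Inside a fixed compact set, all these terms are bounded, so an appropriately large choice of $D'$ ensures $\mathcal{L}_\eps \Psi \ge 0$ everywhere.

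The genuine difficulty lies in the term $-A'\,I_\eps'(t)\,\Lambda(x)$: since $\Lambda(x)$ inherits quadratic growth from $\alpha$, and $I_\eps'(t)$ may be large and negative on a time layer of width $\mathcal{O}(\eps)$ near $t=0$ (as exhibited in the proof of Lemma~\ref{lem:Jeps}, where $(J_\eps)_-\le C + (J_\eps(0))_- e^{-t/C\eps}$), this contribution is not obviously absorbed by the quartic gain. I would handle it by splitting the bound on $I_\eps'$ into its uniformly bounded part and its transient, exponentially decaying part: the uniformly bounded part yields an $O(x^2)$ contribution absorbed by the quartic gain; the transient part can be absorbed either by enlarging $E'$ (since $\int_0^\infty e^{-s/C\eps}ds = C\eps$ is small and $\Lambda$ has prescribed growth rate) or by first establishing the inequality on $[c\eps, T]\times\mathbb{R}$ and closing the remaining short-time window via a direct energy-type argument using the initial bound.

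Finally, the initial ordering $\Psi(0,x)\ge \phi_\eps(0,x)$ follows, for $E'\ge B$ and $B'$ and $A'$ chosen as above, from $\sup_\eps \eps\log\mu_{0,\eps}\le -Ax^2+B$ together with $\Lambda(x)=O(x^2)$ and the uniform bound on $I_\eps(0)$ from Lemma~\ref{lem:y2}. Comparison is then obtained from the linear parabolic inequality satisfied by $w:=\phi_\eps-\Psi$, since
$$
|\partial_x\phi_\eps|^2-|\partial_x\Psi|^2 = (\partial_x\phi_\eps+\partial_x\Psi)\,\partial_x w,
$$
turning the quadratic gradient into a first-order linear term with coefficient depending on $\partial_x\phi_\eps+\partial_x\Psi$; weighted maximum-principle arguments (exploiting the quadratic growth at infinity of $\Psi$) then yield $w\le 0$. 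The main obstacle I anticipate, and the only one that is not routine, is exactly Step 3 above: making the transient behaviour of $I_\eps'(t)\Lambda(x)$ quantitatively compatible with the supersolution inequality uniformly in $\eps$.
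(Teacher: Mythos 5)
Your strategy is the same as the paper's: take $\Psi=\psi_\eps(t,x)=-A'I_\eps(t)\Lambda(x)-\tfrac{\eps}{2}B'x^2+D't+E'$, expand $\mathcal L_\eps\psi_\eps$ in powers of $\eps$ as $D'+\eps^{-1}h_1+h_2+\eps h_3$, choose constants making each piece admissible, verify the initial ordering, and conclude by a comparison principle. Two of your departures are worth flagging, one helpful and one that needs tightening.

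On the helpful side, the paper picks $A'<2/\sigma^2$ (so $h_1\ge 0$ strictly, a slack term) rather than your exactly cancelling choice $A'=2/\sigma^2$ — either works, and ``forces'' is too strong. More substantively, your exploitation of the $O(1)$ quartic term $-f(x)A'I_\eps\alpha(x)$, combined with $I_\eps\ge K^{-1}$, is a real improvement: the paper only claims $h_2\ge -C'(1+x^2)$ and consequently its final bound $\mathcal L_\eps\psi_\eps\ge D'-C'(1+x^2)-C'\eps+C''\eps x^4$ is nonnegative only for $D'=D'(\eps)\gtrsim 1/\eps$, which contradicts the $\eps$-uniformity the Proposition itself asserts (and which is needed for the constant $F=\sqrt{E'+D'T}$ later on). Retaining the $O(1)$ gain $-fA'I_\eps\alpha\ge cx^4-C$ makes $D'$ uniform. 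One caveat: the hypothesis only yields the one-sided bound $f(x)\le f(0)+C_0(x-x^3/3)$, not the asymptotic equivalence $f\sim-C_0x^3/3$ you assert; the one-sided bound is exactly what is needed to fix the sign of $f\alpha$ at $\pm\infty$, but the phrasing should be corrected.

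On the needs-tightening side, the difficulty you isolate — the term $-A'I'_\eps(t)\Lambda(x)$ — is genuine, and it is in fact a gap that the paper glosses over. Lemma~\ref{lem:Jeps} gives a \emph{uniform BV} bound on $I_\eps$ and a uniform \emph{upper} bound on $I'_\eps$, but not a uniform pointwise \emph{lower} bound: since $g_2=\beta'\alpha\ge C^{-1}$ and $I_\eps(0)\ge K^{-1}$, one has $I'_\eps(0)\sim -c/\eps$, so $(J_\eps(0))_-$ is $O(1/\eps)$ and the paper's statement that ``$I'_\eps$ is uniformly bounded'' holds only for $t\ge t_0>0$ (with a bound depending on $t_0$). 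The workarounds you sketch are the right instinct — the transient is integrable ($\int_0^\infty e^{-s/C\eps}\,ds=C\eps$) and the $\eps^{-1}h_1$ slack (if one keeps $A'<2/\sigma^2$) may also help absorb it — but neither is carried out, and the alternative of first proving the bound on $[c\eps,T]$ would require separate short-time control on $\phi_\eps$. As written, both your proof and the paper's obtain the supersolution inequality with $\eps$-independent $D'$ only for $t\ge t_0$, not ``at all positive times.''
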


\begin{proof}
Consider the map
$$
\psi_\eps(t,x) = -A'I_\eps(t)\Lambda(x)-\frac{\eps}2 B'x^2+D't+E',
$$
for some $A',B',D'$ and $E'$ positive constants to be specified later. Applying the map $\mathcal L_\eps$ to $\psi_\eps(t,x)$ we find that
\begin{multline*}
\mathcal L_\eps\psi_\eps = D'-A'I'_\eps\Lambda(x)+\left(\eps f'(x) - \alpha'(x)I_{\eps}(t) \right)
\\
-  \left(\eps f(x)- \alpha(x) I_{\eps}(t)\right)\frac{1}{\eps}\Big(\eps B'x+A'I_\eps(t)\alpha(x)\Big)-\frac{\sigma^2}{2\eps}\Big(\eps B'x+A'I_\eps(t)\alpha(x)\Big)^2
\\
+\frac{\sigma^2}{2}\Big(\eps B'+A'I_\eps(t)\alpha'(x)\Big)=:D'+\eps^{-1} h_1(t,x)+h_2(t,x)+\eps h_3(t,x),
\end{multline*}
with
\begin{align*}
h_1(t,x) &= A'\alpha(x)^2I_\eps(t)^2\left(1-\frac{A'\sigma^2}{2}\right),
\\
h_2(t,x) &= -A'I'_\eps(t)\Lambda(x)+\alpha'(x)I_\eps(t)\left(\frac{A'\sigma^2}{2}-1\right)
\\
&+B'x\alpha(x)I_\eps(t)(1-\sigma^2 A') -f(x)A'I_\eps(t)\alpha(x),
\\
h_3(t,x) &= f'(x) - B'xf(x) - B'^2x^2\frac{\sigma^2}{2}+ B'\frac{\sigma^2}{2}.
\end{align*}
First, we work with the terms of order $\eps^{-1}$, to find that for any $A'<\frac{2}{\sigma^2}$ it holds that
$$
h_1(t,x)= A\left(1-\frac{\sigma^2 A}{2}\right)I_\eps(t)^2\alpha(x)^2\geq0.
$$
For the other terms, we set $M>0$ large and notice that $h_2(t,\cdot)$ and $h_3(t,\cdot)$ are continuous functions, and then there is a positive constant $C_M$ such that they are larger than $-C_M$ for all $x\in[-M,M]$. Secondly, from~\eqref{eq:aux0} it follows that
\begin{align*}
h_3(t,x) &= f'(x) - B'xf(x) - (B')^2x^2\frac{\sigma^2}{2}+ B'\frac{\sigma^2}{2}
\\
&\geq  - C'(1+x^2)- B'\left(xf(0)+C_0x^2-C_0\frac{x^4}3\right)\geq -C'+C''x^4,
\end{align*}
with $C',C''$ two positive generic constants, $C''$ small enough. Finally, for $h_2(t,x)$, we use the fact that both $I_\eps$ and $I'_\eps$ are uniformly bounded. Again, $h_2$ is continuous and bounded in the interval $[-M,M]$. Outside this region, have that $\Lambda(x)=O(1+x^2)$, $\alpha'(x)=O(1)$. Moreover $x\alpha(x)=O(1+x^2)$ and $f(x)\alpha(x)=(xf(x))\frac{\alpha(x)}{x}=O(1+x^2)$, then it follows that
$$
h_2(t,x)=O(1+x^2)\geq -C'(1+x^2)
$$
for some $C'>0$.
Altogether, we obtain that:
\begin{align*}
\mathcal L_\eps\psi_\eps &= D'+\eps^{-1}h_1(t,x)+h_2(t,x)+\eps h_3(t,x)
\\
&\geq D'-C'(1+x^2)-C'\eps+C''x^4\eps
\end{align*}
which is nonnegative for $D'=D'(\eps)$ large enough. To finish the proof, notice that from the uniform initial conditions bound, we have that
$$
\sup_{0<\eps<1}\phi_\eps(0,x)=\sup_{0<\eps<1}\eps\log \mu_{0,\eps}(x) \leq -Ax^2+B,
$$
but since $I_\eps(t)$ is uniformly bounded by some $C_I$ it follows that 
$$
\psi_\eps(0,x)= -A'I_\eps(0)\Lambda(x)-\frac{\eps}2 B'x^2+E' \geq -A'C_I|\Lambda(x)|-\frac{\eps}2 B'x^2+E',
$$
and again $\Lambda(x)=O(1+x^2)$ outside of the compact $[-M,M]$ and bounded by below inside $[-M,M]$. As a consequence, there is a constant $C>0$ such that
$$
\psi_\eps(t,x)\geq -C(1+x^2)-\frac{\eps}2 B'x^2+E'\geq -Ax^2+B.
$$
This proves that the inequality claimed in the proposition is true for the initial condition, for a proper choice of constants.
\end{proof}

\subsection{Regularity of the solutions of the problem}

We now prove that the nonlinear operator $\mathcal L_\eps$ has a regularizing effect such that solutions are uniformly Lipschitz in space at any positive time, independently of the regularity of the initial conditions. To that aim, we will study the equation associated to with the space derivative of $\phi_\eps(t,x)$. In particular, we have the following:
\begin{proposition}
\label{prop:regularity_space}
Assume that $\eps\leq 1$ and let $F=\sqrt{E'+D'T}$ for $E'$ and $D'$ the same positive constants from Proposition~\ref{prop:supersol}, and let $T>0$ be an arbitrary time horizon. Then the map $w_\eps(t,x)=\sqrt{2F^2-\phi_\eps(t,x)}$ is well defined and there exists a constant $\theta(T)$independent of $\eps$ such that
$$
\left|\partial_x w_\eps(t,x)\right|\leq  \sqrt{\frac{1}{t\sigma^2}}+\theta(T).
$$
\end{proposition}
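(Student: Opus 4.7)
The plan is to apply a Bernstein-type parabolic estimate, leveraging the diffusion in~\eqref{eq:hopfcole} to produce the characteristic $1/\sqrt{t}$ smoothing on $\partial_x w_\eps$. First, well-definedness: by Proposition~\ref{prop:supersol} one has $\phi_\eps(t,x)\le -A'I_\eps(t)\Lambda(x)-\tfrac{\eps}{2}B'x^2+D't+E'$, and since $\Lambda$ is bounded below on $\R$ (it grows quadratically at $\pm\infty$ by~\eqref{hyp:h2}) and $I_\eps$ is uniformly bounded by Lemma~\ref{lem:y2}, the resulting $O(1)$ corrections can be absorbed into $E'$ to yield $\phi_\eps\le F^2$; hence $w_\eps=\sqrt{2F^2-\phi_\eps}\ge F>0$ is well-defined and smooth.

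Substituting $\phi_\eps=2F^2-w_\eps^2$ into~\eqref{eq:hopfcole} (using $\partial_x\phi_\eps=-2w_\eps\partial_xw_\eps$ and $\partial^2_{xx}\phi_\eps=-2(\partial_xw_\eps)^2-2w_\eps\partial^2_{xx}w_\eps$), a direct calculation gives
$$
\partial_tw_\eps-\tfrac{\sigma^2}{2}\partial^2_{xx}w_\eps=\frac{\eps f'(x)-\alpha'(x)I_\eps}{2w_\eps}-\frac{\eps f(x)-\alpha(x)I_\eps}{\eps}\partial_xw_\eps+\Bigl(\frac{\sigma^2}{2w_\eps}-\frac{\sigma^2w_\eps}{\eps}\Bigr)(\partial_xw_\eps)^2,
$$
whose crucial feature for $\eps\le 1$ and $w_\eps\ge F$ is the strongly dissipative term $-\tfrac{\sigma^2w_\eps}{\eps}(\partial_xw_\eps)^2$. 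Setting $v_\eps=\partial_xw_\eps$, differentiating this equation in $x$, and forming the equation for $Q_\eps=v_\eps^2$ by multiplying by $2v_\eps$, I would organize the resulting terms into a differential inequality of the form
$$
\partial_tQ_\eps-\tfrac{\sigma^2}{2}\partial^2_{xx}Q_\eps+a_\eps(t,x)\,\partial_xQ_\eps\le -\frac{c_1}{\eps}Q_\eps^2+\frac{c_2}{\eps}Q_\eps+C(T)(1+Q_\eps),
$$
with $c_1>0$ produced by the lower bound $w_\eps\ge F$, and with $c_2,C(T)$ depending on the uniform bounds on $I_\eps$, on the growth hypotheses~\eqref{hyp:mr-existence}--\eqref{hyp:h5}, and on $T$, but not on $\eps$. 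The singular $1/w_\eps$ factors are tamed by $w_\eps\ge F$, and the subpolynomial growth of $f,f',\alpha,\alpha'$ together with the quadratic upper bound on $-\phi_\eps$ controls the remaining lower-order contributions.

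Finally, I compare $Q_\eps$ with the time-dependent barrier $\bar Q(t)=\tfrac{1}{t\sigma^2}+\theta^2(T)$. Since $\bar Q$ is spatially constant, $\partial_x\bar Q=\partial^2_{xx}\bar Q=0$ and $\partial_t\bar Q=-1/(t^2\sigma^2)$; it thus suffices to choose $\theta(T)$ large enough that the RHS evaluated at $\bar Q$ is $\le -1/(t^2\sigma^2)$, which holds uniformly in $\eps\le 1$ because the quartic-in-$Q$ dissipation $\tfrac{c_1}{\eps}\bar Q^2$ dominates $\tfrac{c_2}{\eps}\bar Q$ once $\bar Q\ge c_2/c_1$, and the remaining $\eps$-independent corrections are absorbed into $\theta^2(T)$. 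Because $\bar Q(t)\to\infty$ as $t\to 0^+$, any (possibly unbounded) initial data for $Q_\eps$ is automatically dominated near $t=0$, and a parabolic maximum principle applied to $Q_\eps-\bar Q$ (noting that the transport term $a_\eps\partial_xQ_\eps$ vanishes at an interior spatial maximum) gives $Q_\eps\le\bar Q$ on $(0,T]\times\R$; the claimed estimate then follows from $\sqrt{a+b}\le\sqrt a+\sqrt b$. The principal obstacle is precisely the bookkeeping in the step above: verifying that the dangerous $1/\eps$-sized terms produced when differentiating $-\tfrac{\alpha I_\eps}{\eps}\partial_xw_\eps$ are indeed dominated, uniformly in $\eps$, by the $1/\eps$ dissipation coming from the Hopf-Cole nonlinearity, which is exactly where the uniform lower bound $w_\eps\ge F$ from Step~1 and the hypothesis $\eps\le 1$ intervene.
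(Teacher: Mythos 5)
Your proposal is correct and follows essentially the same strategy as the paper's proof: the change of variables $w_\eps=\sqrt{2F^2-\phi_\eps}$ (the paper phrases it as $\phi_\eps=\gamma(w_\eps)$ with $\gamma(x)=-x^2+2F^2$) produces the same transformed equation with the crucial dissipative coefficient $-\sigma^2 w_\eps/\eps$ on the squared gradient, one then differentiates in $x$ and writes a parabolic inequality for the spatial gradient, and compares with a spatially constant $1/\sqrt t$-type barrier that is automatically a super-solution because the transport terms drop out. The only cosmetic difference is that you track $Q_\eps=(\partial_x w_\eps)^2$ (quadratic dissipation $-c_1Q_\eps^2/\eps$, barrier $1/(\sigma^2 t)+\theta^2$) whereas the paper tracks $|\partial_x w_\eps|$ (cubic dissipation $-\sigma^2|p_\eps|^3/\eps$, barrier $\sqrt{\eps/(\sigma^2(1+\eps)t)}+\theta(T)$); both lead to the stated estimate for $\eps\le 1$.
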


\begin{proof}
First, notice that proposition~\ref{prop:supersol} implies in particular that
$$
\phi_\eps(t,x)\leq D'T+E'\quad\Rightarrow\quad 2F^2-\phi_\eps(t,x)\geq D'T+E'>0,
$$
and thus ensures that $w_\eps$ is well-defined. Our proof of an upper bound relies on a well-chosen change of variable. In detail, let $\gamma$ be a smooth invertible map, with nonvanishing derivative, and define $\phi_\eps(t,x)=\gamma(w_\eps(t,x))$. Elementary calculus yields:
$$
\partial_t\phi_\eps = \gamma'(w_\eps)\partial_tw_\eps,\qquad \partial_x\phi_\eps = \gamma'(w_\eps)\partial_xw_\eps,
$$
and
$$
\partial^2_{xx}\phi_\eps=\gamma''(w_\eps)|\partial_xw_\eps|^2+\gamma'(w_\eps)\partial^2_{xx}w_\eps,
$$
implying that $w_\eps$ is a solution to
\begin{multline}
\partial_tw_\eps = -\frac{\left(\eps f'(x) - \alpha'(x)I_{\mu_\eps}(t) \right)}{\gamma'(w_\eps)}-  \left( f(x)- \eps^{-1}\alpha(x) I_{\mu_\eps}(t)\right)\partial_xw_\eps
\\
 + \frac{\sigma^2}{2}\left(\frac{\gamma'(w_\eps)}{\eps}+\frac{\gamma''(w_\eps)}{\gamma'(w_\eps)}\right) \vert \partial_x w_\eps \vert^2  + \frac {\sigma^2}{2} \partial_{xx}^2 w_\eps.
\label{eq:wepsilon}
\end{multline}
Define $p_\eps(t,x):=\partial_xw_\eps$, and take an extra spatial partial derivative on~\eqref{eq:wepsilon} to get
\begin{multline*}
\partial_tp_\eps- \frac {\sigma^2}{2} \partial_{xx}^2 p_\eps = -\frac{\left(\eps f''(x) - \alpha''(x)I_{\mu_\eps}(t) \right)}{\gamma'(w_\eps)}+\frac{\left(\eps f'(x) - \alpha'(x)I_{\mu_\eps}(t) \right)}{|\gamma'(w_\eps)|^2}\gamma''(w_\eps)\,p_\eps
\\
-  \left( f'(x)- \eps^{-1}\alpha'(x) I_{\mu_\eps}(t)\right)p_\eps-  \left( f(x)- \eps^{-1}\alpha(x) I_{\mu_\eps}(t)\right)\partial_xp_\eps
\\
 + \frac{\sigma^2}{2}\left(\frac{\gamma''(w_\eps)}{\eps}+\frac{\gamma'''(w_\eps)}{\gamma'(w_\eps)}-\frac{\gamma''(w_\eps)^2}{\gamma'(w_\eps)^2}\right)  p_\eps^3
 \\ 
 + \sigma^2\left(\frac{\gamma'(w_\eps)}{\eps}+\frac{\gamma''(w_\eps)}{\gamma'(w_\eps)}\right) p_\eps \cdot\partial_x p_\eps,
\end{multline*}
and multiplying by $p_\eps$ the equation becomes
\begin{multline*}
\partial_t\frac{|p_\eps|^2}{2}-\frac{\sigma^2}{2}|\partial_x|p_\eps||^2-\frac{\sigma^2}{2}|p_\eps|\partial^2_{xx}|p_\eps|+\frac{\sigma^2}{2}|\partial_xp_\eps|^2
=
\\
-\frac{\left(\eps f''(x) - \alpha''(x)I_{\mu_\eps}(t) \right)}{\gamma'(w_\eps)}\, p_\eps+\frac{\left(\eps f'(x) - \alpha'(x)I_{\mu_\eps}(t) \right)}{|\gamma'(w_\eps)|^2}\gamma''(w_\eps)\,|p_\eps|^2
\\
-  \left( f'(x)- \eps^{-1}\alpha'(x) I_{\mu_\eps}(t)\right)\,|p_\eps|^2-  \frac12\left( f(x)- \eps^{-1}\alpha(x) I_{\mu_\eps}(t)\right)\partial_x|p_\eps|^2
\\
 + \frac{\sigma^2}{2}\left(\frac{\gamma''(w_\eps)}{\eps}+\frac{\gamma'''(w_\eps)}{\gamma'(w_\eps)}-\frac{\gamma''(w_\eps)^2}{\gamma'(w_\eps)^2}\right) |p_\eps|^4
 \\ 
 + \frac{\sigma^2}{2}\left(\frac{\gamma'(w_\eps)}{\eps}+\frac{\gamma''(w_\eps)}{\gamma'(w_\eps)}\right) p_\eps \cdot\partial_x |p_\eps|^2.
\end{multline*}
We find that:
\begin{multline*}
\partial_t|p_\eps|-\frac{\sigma^2}{2}\partial^2_{xx}|p_\eps|
\leq 
\\
-\frac{\left(\eps f''(x) - \alpha''(x)I_{\mu_\eps}(t) \right)}{\gamma'(w_\eps)}\, \frac{p_\eps}{|p_\eps|}+\frac{\left(\eps f'(x) - \alpha'(x)I_{\mu_\eps}(t) \right)}{|\gamma'(w_\eps)|^2}\gamma''(w_\eps)\,|p_\eps|
\\
-  \left( f'(x)- \eps^{-1}\alpha'(x) I_{\mu_\eps}(t)\right)|p_\eps|-  \left( f(x)- \eps^{-1}\alpha(x) I_{\mu_\eps}(t)\right)\partial_x|p_\eps|
\\
 + \frac{\sigma^2}{2}\left(\frac{\gamma''(w_\eps)}{\eps}+\frac{\gamma'''(w_\eps)}{\gamma'(w_\eps)}-\frac{\gamma''(w_\eps)^2}{\gamma'(w_\eps)^2}\right) |p_\eps|^3
 \\ 
 + \sigma^2\left(\frac{\gamma'(w_\eps)}{\eps}+\frac{\gamma''(w_\eps)}{\gamma'(w_\eps)}\right) p_\eps \cdot\partial_x |p_\eps|.
\end{multline*}
Let us now specify the map $\gamma: x\mapsto - x^2+2F(T)^2$. Recall that, by definition, we have $F(T)\leq w_\eps$, and, moreover
$$
\frac{\gamma''(w_\eps)}{\eps}+\frac{\gamma'''(w_\eps)}{\gamma'(w_\eps)}-\frac{\gamma''(w_\eps)^2}{\gamma'(w_\eps)^2} \leq -\frac{2}{\eps}-\frac{1}{w_\eps^2}\leq -2\eps^{-1}.
$$
We also recall that $f$ and $\alpha$ have linear growth at infinity, thus all their derivatives are bounded. Altogether, we have that:
\begin{multline*}
\partial_t|p_\eps|-\frac{\sigma^2}{2}\partial^2_{xx}|p_\eps|
\leq \frac{C_M(1+\eps)}{2F(T)}+\frac{C_M(1+\eps)}{2F(T)^2}|p_\eps|
+C_M(1+\eps^{-1})|p_\eps|- \sigma^2\eps^{-1} |p_\eps|^3
\\
-  \left( f(x)- \eps^{-1}\alpha(x) I_{\mu_\eps}(t)\right)\partial_x|p_\eps|
 + \sigma^2\left(\frac{\gamma'(w_\eps)}{\eps}+\frac{\gamma''(w_\eps)}{\gamma'(w_\eps)}\right) p_\eps \cdot\partial_x |p_\eps|.
\end{multline*}
Assuming that $\eps<1$ (consistent with the large connectivity regime we are interested in), there exists a constant $C_M(T)$ such that:
\begin{multline*}
\partial_t|p_\eps|-\frac{\sigma^2}{2}\partial^2_{xx}|p_\eps|
\leq C_M(T)(1+|p_\eps|)
- \frac{\sigma^2}2|p_\eps|^3
+\eps^{-1}\left(C_M|p_\eps|- \frac{\sigma^2}2 |p_\eps|^3\right)
\\
-  \left( f(x)- \eps^{-1}\alpha(x) I_{\mu_\eps}(t)\right)\partial_x|p_\eps|
 + \sigma^2\left(\frac{\gamma'(w_\eps)}{\eps}+\frac{\gamma''(w_\eps)}{\gamma'(w_\eps)}\right) p_\eps \cdot\partial_x |p_\eps|. 
\end{multline*}
Noting that the expression on the first line is a third-degree polynomial in $\vert p_\eps\vert$, with coefficients either of order 1 or $\eps^{-1}$ and depending on $T$, there exists a large enough positive constant $\theta(T)$ such that
\begin{multline*}
\partial_t|p_\eps|-\frac{\sigma^2}{2}\partial^2_{xx}|p_\eps|
\leq -\frac{\sigma^2}{2}\left(|p_\eps|-\theta(T)\right)^3(1+\eps^{-1})
\\
-  \left( f(x)- \eps^{-1}\alpha(x) I_{\mu_\eps}(t)\right)\partial_x|p_\eps|
 + \sigma^2\left(\frac{\gamma'(w_\eps)}{\eps}+\frac{\gamma''(w_\eps)}{\gamma'(w_\eps)}\right) p_\eps \cdot\partial_x |p_\eps|.
\end{multline*}
Noticing that the spatially homogeneous function $y_\eps(t) = \sqrt{\frac{\eps}{\sigma^2(1+\eps)t}}+\theta(T),$ which is such that $y(0)=\infty$, is a solution of the equation:
\begin{multline*}
\partial_ty(t,x)-\frac{\sigma^2}{2}\partial^2_{xx}y(t,x)
= -\frac{\sigma^2}{2}\left(y(t,x)-\theta(T)\right)^3(1+\eps^{-1})
\\
-  \left( f(x)- \eps^{-1}\alpha(x) I_{\mu_\eps}(t)\right)\partial_xy(t,x)
 + \sigma^2\left(\frac{\gamma'(w_\eps)}{\eps}+\frac{\gamma''(w_\eps)}{\gamma'(w_\eps)}\right) y(t,x) \cdot\partial_x y(t,x),
\end{multline*}
we conclude that 
\[
|p_\eps(t,x)|\leq  \sqrt{\frac{\eps}{\sigma^2(1+\eps)t}}+\theta(T),\qquad 0<t\leq T. 
\]
\end{proof}

\begin{corollary}
\label{cor:regularity_hc}
Under the hypotheses of Proposition~\ref{prop:regularity_space}, the family of Hopf-Cole transformations $\{\phi_\eps\}_{\eps\leq 1}$ is uniformly bounded in compact subsets of $(0,\infty)$ $\times\R$.
\end{corollary}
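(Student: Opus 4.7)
The plan is to establish both a uniform upper and a uniform lower bound for $\phi_\eps$ on any compact set $K=[t_0,T]\times[-R,R]\subset(0,\infty)\times\R$. The upper bound will follow almost directly from Proposition~\ref{prop:supersol}: on $K$, $\Lambda$ is continuous and hence bounded, $I_\eps(t)$ is uniformly bounded by Lemma~\ref{lem:y2}, the term $-\tfrac{\eps}{2}B'x^2$ is non-positive, and $D't+E'$ is bounded on $[0,T]$. Together these yield $\phi_\eps(t,x)\leq C_1(T,R)$ uniformly in $\eps\leq 1$.

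The lower bound is the substantive part, and the strategy is to combine mass conservation (to locate a ``good'' point at which $\phi_\eps$ is controlled from below) with the Lipschitz regularity of $w_\eps$ furnished by Proposition~\ref{prop:regularity_space} (to transport this control across $K$). First, since Lemma~\ref{lem:y2} gives a uniform bound on the second moment of $\mu_\eps(t,\cdot)$, Markov's inequality produces some $R_0>0$, independent of $\eps$ and of $t\in[t_0,T]$, such that $\int_{-R_0}^{R_0}\mu_\eps(t,y)\,dy\geq 1/2$. The average density on $[-R_0,R_0]$ is therefore at least $1/(4R_0)$, so one can find $x^\star_\eps(t)\in[-R_0,R_0]$ with $\mu_\eps(t,x^\star_\eps(t))\geq 1/(4R_0)$, and hence $\phi_\eps(t,x^\star_\eps(t))\geq \eps\log\bigl(1/(4R_0)\bigr)\geq -|\log(4R_0)|$ for $\eps\leq 1$. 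Translating back through the definition of $w_\eps$, this gives $w_\eps(t,x^\star_\eps(t))\leq \sqrt{2F^2+|\log(4R_0)|}$.

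To finish, I would propagate this pointwise bound on $w_\eps$ across $K$ using the spatial Lipschitz estimate $|\partial_x w_\eps(t,x)|\leq \sqrt{1/(t_0\sigma^2)}+\theta(T)=:C(t_0,T)$, valid uniformly on $[t_0,T]$ by Proposition~\ref{prop:regularity_space}. For every $(t,x)\in K$, since $|x-x^\star_\eps(t)|\leq R+R_0$, one has
\[
w_\eps(t,x)\leq w_\eps(t,x^\star_\eps(t))+C(t_0,T)\,(R+R_0)\leq \sqrt{2F^2+|\log(4R_0)|}+C(t_0,T)(R+R_0).
\]
Squaring and using $\phi_\eps=2F^2-w_\eps^2$ delivers the desired uniform lower bound $\phi_\eps(t,x)\geq -C_2(t_0,T,R)$ on $K$, completing the argument when combined with the upper bound.

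The main technical subtlety I anticipate is the step in which one passes from the integral mass bound to a pointwise lower bound on $\mu_\eps$. Since Theorem~\ref{th:MR-existence} only provides $\mu_\eps\in C([0,T];L^2_{m_\kappa})$, the value $\mu_\eps(t,x)$ is a priori defined only almost everywhere in $x$. One can either upgrade $\mu_\eps$ to a continuous (in fact smooth) function on $(0,\infty)\times\R$ via the parabolic smoothing of equation~\eqref{eq:pde}, or observe that the set $\{x\in[-R_0,R_0]:\mu_\eps(t,x)\geq 1/(4R_0)\}$ has positive Lebesgue measure and select $x^\star_\eps(t)$ as a Lebesgue point of the (locally Lipschitz) map $w_\eps(t,\cdot)$ within it; the propagation argument then extends to all of $K$ by continuity of $w_\eps$.
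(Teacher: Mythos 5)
Your proof is correct, and it shares the overall architecture of the paper's argument: the uniform upper bound comes directly from Proposition~\ref{prop:supersol}, and the lower bound is obtained by first locating, for each $t\in[t_0,T]$, a point in a fixed bounded interval where $\phi_\eps$ is bounded below uniformly in $\eps$, and then propagating that bound across the compact set using the uniform Lipschitz estimate on $w_\eps$ from Proposition~\ref{prop:regularity_space}. Where you differ is in the localization step. The paper upgrades the supersolution bound to a quadratic decay $\phi_\eps(t,x)\leq -C_1x^2+C_2$ for $\vert x\vert$ large (this uses $I_\eps(t)\geq K^{-1}$), deduces $\int_{\vert x\vert>R}e^{\phi_\eps/\eps}\,dx\leq 1/2$ for $R$ large \emph{and $\eps$ small}, and then by mass conservation and an exponential-concentration pigeonhole finds $x_\eps\in[-R,R]$ with $\phi_\eps(t,x_\eps)>-1$. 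You instead use a moment bound together with Markov's inequality to localize half the mass in a fixed $[-R_0,R_0]$ for all $\eps\leq 1$, then a mean-value argument to find $x^\star$ with $\mu_\eps(t,x^\star)\geq 1/(4R_0)$, hence $\phi_\eps(t,x^\star)\geq -\vert\log(4R_0)\vert$. Two small remarks: first, Lemma~\ref{lem:y2} as stated controls the moment of order $2k$ tied to the growth exponent of $\beta$, not necessarily the second moment, but the same computation applies with $k=1$ because the $L^2_{m_1}$ hypothesis of Theorem~\ref{th:MR-existence} gives initial moment control of every order, so your citation is fine in spirit. Second, the a.e.\ issue you flag can be dispatched more simply than either of your suggestions: Remark~\ref{rem:unifcont} (itself a consequence of Proposition~\ref{prop:regularity_space}) shows $\phi_\eps(t,\cdot)$ is Lipschitz on compacts, so $\mu_\eps(t,\cdot)=e^{\phi_\eps(t,\cdot)/\eps}$ is continuous and your averaging argument produces a genuine pointwise $x^\star$. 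On balance, your localization is slightly more elementary and holds for all $\eps\leq 1$ rather than only $\eps$ small; the price is a weaker constant in the lower bound ($-\vert\log(4R_0)\vert$ versus $-1$), which is immaterial for the qualitative boundedness statement the corollary is after.
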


\begin{proof}
Proposition~\ref{prop:supersol} implies that
$$
\phi_\eps(t,x)\leq -A'I_\eps(t)\Lambda(x)-\frac{\eps}{2}B'x^2+D't+E' \leq -A'I_\eps(t)\Lambda(x)+C(T),
$$
with $C(T)=D'T+E'$. Under the current assumptions, for $x>M$ for some $M>1$, we have:
$$
Cx-\delta\leq \alpha(x)\leq Cx+\delta
$$
and thus
\begin{align*}
\Lambda(x) & = \int_0^M\alpha(s)ds +\int_M^x\alpha(s)ds \geq C(M) + \frac{Cx^2}{2}-\delta x
\\
&\geq C(M)+ \frac{Cx^2}{2}-\delta x^2 =: C(M)+C(\delta)x^2,
\end{align*}
with $C(\delta)$ positive for $M$ large enough; an analogous bound exists for $x\in(-\infty,-M]$. Since $I_\eps(t)$ is always positive, we have that
$$
\phi_\eps(t,x) \leq -AI_\eps(t)(C(M)+C(\delta)x^2)+C(T),
$$
and using that $I_\eps(t)\geq K^{-1}$ we finally have the desired quadratic bound:
$$
\phi_\eps(t,x) \leq -AK^{-1}C(\delta)x^2+C(M,T)=: -C_1x^2+C_2,
$$
which is only valid for $\vert x\vert >M$. Take now $R>M$ it follows that
$$
\int_{\R\setminus [-R,R]} \mu_\eps(t,x)\,dx =\int_{\R\setminus [-R,R]}e^{\phi_\eps(t,x)/\eps}\,dx \leq \int_{\R\setminus [-R,R]}e^{-(C_1x^2-C_2)/\eps}\,dx,
$$
which, for $R$ large enough and $\eps\leq \eps_0$ for some $\eps_0$ small enough, is certainly upperbounded by $\frac 1 2$.
Mass conservation thus necessarily implies that $\int_{-R}^{R}e^{\phi_\eps(t,x)/\eps}\,dx\geq \frac12$. We deduce that for each $\eps\leq \eps_0$ there is some $x_\eps\in[-R,R]$ such that $\phi_\eps(x_{\eps})>-1$, meaning that:
$$
w_\eps(t,x_\eps)<\sqrt{2F^2+1}.
$$
In Proposition~\ref{prop:regularity_space} we showed that the space derivative of $w_\eps$ is uniformly bounded for each point in $[t,T]\times [-R,R]$ and then
$$
|w_\eps(t,x+h)-w_\eps(t,x)|\leq \left(\sqrt{\frac{1}{t\sigma^2}}+\theta(T)\right) h,
$$
and using $x_\eps$ and this previous inequality, we have that
$$
w_\eps(t,x) \leq w_\eps(t,x_\eps)+|w_\eps(t,x)-w_\eps(t,x_\eps)|\leq \sqrt{2F^2+1}+\left(\sqrt{\frac{1}{t\sigma^2}}+\theta(T)\right)R.
$$
By definition of $w_\eps$, we finally get the desired lower bound
$$
\phi_\eps(t',x) = 2F^2-w_\eps^2(t,x)\geq 2F^2-\left(\sqrt{2F^2+1}+\left(\sqrt{\frac{1}{t\sigma^2}}+\theta(T)\right)R\right)^2
$$
for all $(t',x)\in [t,T]\times[-R,R]$.
\end{proof}

\begin{remark}
\label{rem:unifcont}
The solution to the equation~\eqref{eq:hopfcole} is actually, uniformly continuous in space on $[t_0,T]\times[-R,R]$ for any $t_0>0$ and $R>0$ fixed. Indeed, notice that
$$
\phi_\eps(t,x)=2F^2-w_\eps(t,x)^2,
$$
therefore
$$
\partial_x\phi_\eps(t,x) = -2w_\eps(t,x)\partial_xw_\eps(t,x).
$$
From Proposition~\ref{prop:regularity_space} we have that the space derivative of $w_\eps$ is bounded, then we have
$$
|\partial_x\phi_\eps(t,x) | \leq 2|w_\eps(t,x)|\left(\sqrt{\frac{1}{t\sigma^2}}+\theta(T)\right),
$$
where $\theta(T)$ does not depend on $\eps$. From Corollary~\ref{cor:regularity_hc} we also have that $w_\eps$ is also bounded, thus
$$
|\partial_x\phi_\eps(t,x) | \leq 2\left(\sqrt{2F^2+1}+\left(\sqrt{\frac{1}{t\sigma^2}}+\theta(T)\right)R\right)\left(\sqrt{\frac{1}{t\sigma^2}}+\theta(T)\right):=M(t_0,T,\sigma),
$$
with $F$ not depending on $\eps$. As a consequence, for $t\in[t_0,T]$ and $x\in(-R,R)$ and $h$ small, we have
$$
|\phi_\eps(t,x+h)-\phi_\eps(t,x)|\leq M(t_0,T,\sigma)|h|.
$$
\end{remark}

The last auxiliary result we need to construct the proof of convergence of the families of Hopf-Cole transformations $\phi_\eps$ is the regularity of the solutions in time. The proof of the following result is an adaptation of the method used in~\cite[Lemma 9.1]{barles2002geometrical} (see also~\cite{perthame2008dirac,barles2009concentration} for other applications of these methods) and that we have also previously adapted to FitzHugh-Nagumo models with diffusive coupling in~\cite{quininao2020clamping}. 

\begin{lemma}
\label{lem:regularity_time}
For all $\eta>0$, $R>0$ and $t_0>0$, there exists a positive constant $\Theta$ such that for all $\eps<1$ and $(t,s,x)\in[t_0,T]\times [t_0,T]\times (-R/2,R/2)$ such that $0<t-s<\Theta$ we have
$$
|\phi_\eps(t,x)-\phi_\eps(s,x)|\leq 2\eta.
$$
\end{lemma}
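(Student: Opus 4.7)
The plan is to establish uniform time equicontinuity of $\{\phi_\eps\}_{\eps<1}$ through a barrier argument coupled with the parabolic comparison principle, adapting the framework developed in~\cite{barles2002geometrical} (see also~\cite{perthame2008dirac,barles2009concentration}) and used by us in~\cite{quininao2020clamping}. Fix $\eta,R,t_0>0$ and an arbitrary basepoint $(s,x_0)\in[t_0,T]\times(-R/2,R/2)$. I will construct a smooth super-barrier $\Xi^+$ on a cylinder $[s,s+\Theta]\times[-R',R']$, with $R'\ge R$ and $\Theta$ to be chosen independently of $\eps$, such that: (i) $\Xi^+(s,\cdot)\ge\phi_\eps(s,\cdot)$ on $[-R',R']$; (ii) $\Xi^+(\tau,\pm R')\ge\phi_\eps(\tau,\pm R')$ for $\tau\in[s,s+\Theta]$; (iii) $\Xi^+$ is a strict supersolution $\mathcal L_\eps\Xi^+>0$ in the interior; and (iv) $\Xi^+(s+\Theta,x_0)\le\phi_\eps(s,x_0)+2\eta$. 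The maximum principle then forces $\phi_\eps\le\Xi^+$ on the cylinder, and evaluating at $x_0$ gives $\phi_\eps(t,x_0)-\phi_\eps(s,x_0)\le 2\eta$. A mirror-image sub-barrier $\Xi^-$ with the reversed slope provides the matching lower bound.

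A natural candidate uses the uniform spatial Lipschitz bound from Remark~\ref{rem:unifcont}: with $M=M(t_0,T,\sigma)$ and $\nu=\eta/(2M)$, set
$$
\Xi^+(\tau,x)=\phi_\eps(s,x_0)+\eta+M\bigl(\sqrt{\nu^{2}+(x-x_0)^{2}}-\nu\bigr)+B(\tau-s),
$$
with $B>0$ to be determined. The smoothing via $\sqrt{\nu^2+\cdot^2}$ preserves $|\partial_x\Xi^+|\le M$ and $|\partial_{xx}\Xi^+|\le M/\nu=2M^2/\eta$, both $\eps$-independent. Inequality (i) is immediate from Remark~\ref{rem:unifcont}, since $\phi_\eps(s,x)-\phi_\eps(s,x_0)\le M|x-x_0|\le\Xi^+(s,x)-\phi_\eps(s,x_0)$. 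Inequality (ii) is enforced by taking $R'$ large enough that the quadratic upper bound $\phi_\eps(\tau,x)\le -C_1x^{2}+C_2$ of Corollary~\ref{cor:regularity_hc} (valid for $|x|\ge M_0$) dominates the linear growth of $\Xi^+$ at $\pm R'$; since $C_1,C_2,M_0$ are $\eps$-independent, so is $R'$.

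The technical heart of the proof---and the main obstacle---is condition (iii) with $B$ uniform in $\eps$. Plugging $\Xi^+$ into $\mathcal L_\eps$, the $O(1)$ contributions coming from $\eps f'(x),\alpha'(x)I_\eps,f(x)\partial_x\Xi^+,\partial_{xx}\Xi^+$ are uniformly bounded thanks to the sub-polynomial growth of $f,\alpha$ guaranteed by Theorem~\ref{th:MR-existence} and the bound on $I_\eps$ from Lemma~\ref{lem:y2}. The dangerous piece is the singular Hamiltonian
$$
\frac{1}{\eps}\Bigl[\alpha(x)I_\eps(\tau)\,\partial_x\Xi^{+}+\frac{\sigma^{2}}{2}|\partial_x\Xi^{+}|^{2}\Bigr]=\frac{\sigma^{2}}{2\eps}\Bigl(\partial_x\Xi^{+}+\frac{\alpha(x)I_\eps(\tau)}{\sigma^{2}}\Bigr)^{\!2}-\frac{\alpha(x)^{2}I_\eps(\tau)^{2}}{2\sigma^{2}\eps},
$$
of order $\eps^{-1}$, which with the simple smoothing above would force $B\sim\eps^{-1}$. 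Following the strategy of~\cite{barles2002geometrical}, the resolution is to refine the barrier so that its spatial gradient is driven towards one of the two zeros of the Hamiltonian $H(p,x,\tau)=\alpha(x)I_\eps(\tau)p+(\sigma^{2}/2)p^{2}$---namely $p=0$ or $p=-2\alpha(x)I_\eps(\tau)/\sigma^{2}$---so that $H(\partial_x\Xi^{+},x,\tau)=O(\eps)$ on the cylinder. Concretely, one augments $\Xi^+$ by a slowly-varying correction $-p^{\star}(\tau)(x-x_0)+\mathcal E(\tau,x)$, where $p^\star$ is chosen to match the relevant zero and $\mathcal E$ absorbs the residual; control on $|\dot p^\star|$ comes from the BV estimate on $I_\eps$ provided by Lemma~\ref{lem:Jeps}. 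With the refined barrier the singular Hamiltonian is reduced to $O(1)$, and an $\eps$-uniform choice $B=C(\eta,R,T,M)+1$ enforces the strict supersolution property; setting $\Theta=\eta/B$ produces (iv) and closes the argument. The principal difficulty lies in this careful refinement and the precise control of the $O(\eps^{-1})$ terms, together with the bookkeeping verifying that every constant appearing in the bound on $B$ is truly $\eps$-independent; this is exactly where the adaptation of~\cite{barles2002geometrical,quininao2020clamping} to the separable-coupling setting of the present paper requires the most delicate work.
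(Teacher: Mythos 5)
Your proposal follows the same high-level template as the paper — build a smooth space--time barrier, verify dominance on the parabolic boundary of a cylinder, verify a strict-supersolution inequality, and conclude by parabolic comparison — but the \emph{barrier itself} differs from the paper's, and that choice is exactly where your argument fails to close. The paper takes
$$
\varphi_\eps(t,y)=\phi_\eps(s,x)+\eta+A\bigl(e^{\eps(x-y)^2}-1\bigr)+B(t-s),
$$
whose spatial profile carries $\eps$ \emph{inside the exponent}. The design rationale is that $\partial_y\varphi_\eps=2A\eps(y-x)e^{\eps(y-x)^2}$, so the two $\eps$-singular terms of $\mathcal L_\eps$ become $\eps^{-1}(\eps f-\alpha I_{\mu_\eps})\partial_y\varphi_\eps=(\eps f-\alpha I_{\mu_\eps})\cdot 2A(y-x)e^{\eps(y-x)^2}$ and $(2\eps)^{-1}\sigma^2|\partial_y\varphi_\eps|^2=2\sigma^2A^2\eps(y-x)^2e^{2\eps(y-x)^2}$: the $\eps$ produced by the gradient has absorbed the singular prefactors. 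The initial-time dominance $\varphi_\eps(s,\cdot)\geq\phi_\eps(s,\cdot)$ on $(-R,R)$ is then extracted by a contradiction argument that leans on the uniform spatial Lipschitz bound of Remark~\ref{rem:unifcont}, while lateral dominance at $y=\pm R$ follows from the choice of $A$. Your smoothed-cone barrier $M\bigl(\sqrt{\nu^2+(x-x_0)^2}-\nu\bigr)$ has an $\eps$-\emph{independent} gradient of size $M$, and as you yourself observe this leaves an $O(\eps^{-1})$ obstruction intact.

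The refinement you sketch to remove that obstruction has two concrete gaps. First, the nontrivial zero of the Hamiltonian is $p^\star(\tau,x)=-2\alpha(x)I_\eps(\tau)/\sigma^2$, which depends on $x$ and not only on $\tau$; a correction of the form $-p^\star(\tau)(x-x_0)$ therefore cannot drive $\partial_x\Xi^+$ to the zero set uniformly over the cylinder, and a genuinely $x$-dependent correction (an antiderivative of $\alpha$) would alter both the boundary dominance and the second derivative in ways the sketch does not control. Second, Lemma~\ref{lem:Jeps} provides only $\int_0^T|\tfrac{d}{dt}I_\eps(t)|\,dt\leq C'+C''T$, a total-variation bound, not a pointwise bound on $|\dot I_\eps(t)|$; since $\dot p^\star$ involves $\dot I_\eps$ pointwise, ``control on $|\dot p^\star|$'' does not follow from that lemma, and the estimate you invoke on $\partial_\tau\Xi^+$ is unsubstantiated. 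As written, the construction therefore leaves $B$, and hence $\Theta=\eta/B$, $\eps$-dependent, which defeats the statement. The missing ingredient is the $\eps$-in-the-exponent barrier that the paper employs, which sidesteps the Hamiltonian-zero-tracking machinery you propose.
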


\begin{proof}
The idea of the proof consists in finding two constants $A,B$ such that for any $x\in(-R/2,R/2)$, $s\in[t_0,T]$ and for every $\eps<\eps_0$:
$$
\phi_\eps(t,x)-\phi_\eps(s,x)\leq \eta+A(e^{\eps(x-y)^2}-1)+B(t-s),
$$
and
$$
-\eta-A(e^{\eps(x-y)^2}-1)-B(t-s)\leq \phi_\eps(t,x)-\phi_\eps(s,x).
$$
We present the proof of the first inequality; the second inequality is proven following the same arguments, and is thus omitted here. Fix some $s\in[t_0,T]$ and some $-\frac R2<x<\frac R2$, and define
 $$
 \varphi_\eps(t,y) = \phi_\eps(s,x)+\eta+A(e^{\eps(x-y)^2}-1)+B(t-s),
 $$
 with $K$ and $C$ constants that we will specify later on. The function $\varphi_\eps$ is clearly well defined for $t\in[s,T)$ and $y\in(-R,R)$. In Corollary~\ref{cor:regularity_hc} we showed that the function $\phi_\eps$ is bounded in compact sets of $(0,\infty)\times\R$, then
 $$
 M:=\|\phi_\eps\|_{L^\infty([t_0,T]\times[-R,R])}<+\infty.
 $$
In particular, we can find $A$ such that
$$
0<\frac{2}{e^{\frac{R^2\eps}{4}} -1}\|\phi_\eps\|_{L^\infty([t_0,T]\times[-R,R])}<A,
$$
and then taking $y\in\{-R,R\}$ we have the inequality
\begin{align*}
\varphi_\eps(t,\pm R) &=  \phi_\eps(s,x)+\eta+A(e^{\eps(x\mp R)^2}-1) +C(t-s)
\\
& \geq  \phi_\eps(t,\pm R) - 2\|\phi_\eps\|_{L^\infty}+\eta
+A(e^{\frac{R^2\eps}{4}} -1)
\\
& \geq  \phi_\eps(t,\pm R) +\eta > \phi_{\eps}(t,\pm R),
\end{align*}
because $(x\mp R)^2\geq\frac{R^2}{4}$. This inequality extends to the whole interval $(-R,R)$. Indeed, if that were not the case, then there would exist $\eta>0$ such that for all constants $A$ we have some $y_{A,\eps}\in(-R,R)$ such that
$$
\varphi_\eps(s,y_{A,\eps}) = \phi_\eps(s,x)+\eta+A(e^{(x-y_{A,\eps})^2}-1) \leq \phi_\eps(s,y_{A,\eps})
$$
in particular
$$ \eta+A(e^{(x-y_{A,\eps})^2}-1)\leq \phi_\eps(s,y_{A,\eps})-\phi_\eps(s,x)\quad\Rightarrow\quad e^{(x-y_{A,\eps})^2}-1\leq 2MA^{-1},
$$
taking $A\rightarrow\infty$, we get that necessarily $y_{A,\eps}\rightarrow x$. Moreover, from remark~\ref{rem:unifcont} we know that $\phi_\eps$ is uniformly continuous in space on $[t_0,T]\times[-R,R]$, then for $A$ large enough
$$
|\phi_\eps(t,y_{A,\eps})-\phi_\eps(t,x)|\leq \frac{\eta}2,
$$
which is a contradiction. Finally, notice that
$$
\partial_t\varphi_\eps(t,y)=B,\quad \partial_y\varphi_\eps(t,y)=2A\eps(y-x)e^{\eps(y-x)^2},$$
and
$$
\partial_{yy}\varphi_\eps(t,y)=2A\eps e^{\eps(y-x)^2}+4A\eps^2(y-x)^2e^{\eps(y-x)^2},
$$
thus
\begin{multline*}
\partial_t\varphi_\eps +\left(\eps f'(y) - \alpha'(y)I_{\mu_\eps}(t) \right)
+  \left(\eps f(y)- \alpha(y) I_{\mu_\eps}(t)\right)\frac{1}{\eps}\partial_y\varphi - \frac{\sigma^2}{2\eps} \vert \partial_y\varphi \vert^2  - \frac {\sigma^2}{2} \partial_{yy}^2 \varphi
\\
=
B+\left(\eps f'(x) - \alpha'(x)I_{\mu_\eps}(t) \right)
-  \left(\eps f(y)- \alpha(y) I_{\mu_\eps}(t)\right)2A(y-x)e^{\eps(y-x)^2}
\\
- \frac{\sigma^2\eps}{2} \left(2A(y-x)e^{\eps(y-x)^2}\right)^2
-\frac{\sigma^2}{2}\left(2A\eps e^{\eps(y-x)^2}+4A\eps^2(y-x)^2e^{\eps(y-x)^2}\right),
\end{multline*}
all functions are bounded below, independently of $\eps$ as soon as it is smaller than 1, thus there are some constants $B, A$ such that
\begin{multline*}
\partial_t\varphi_\eps +\left(\eps f'(y) - \alpha'(y)I_{\mu_\eps}(t) \right)
\\
+  \left(\eps f(y)- \alpha(y) I_{\mu_\eps}(t)\right)\frac{1}{\eps}\partial_y\varphi - \frac{\sigma^2}{2\eps} \vert \partial_y\varphi \vert^2  - \frac {\sigma^2}{2} \partial_{yy}^2 \varphi
\\
B-C(C_f,C_\alpha,C_I)\,\geq 0,
\end{multline*}
whenever $B$ is large enough. From here, we deduce that $\varphi_\eps$ is a super-solution of the equation solved by $\phi_\eps$ and then
$$
\phi_\eps(t,y) \leq \varphi(t,y)=\phi_\eps(s,x)+\eta+A(e^{\eps(x-y)^2}-1) +B(t-s),
$$
the other inequality follows the same arguments. Finally, by taking  $y=x$ we have
$$
|\phi_\eps(t,x)-\phi_\eps(s,x)|\leq \eta+B(t-s)\leq \eta+B\Theta=2\eta,
$$
when $\Theta=\eta/B$.
\end{proof}

\section{The large coupling limit}
\label{sec:limit}

Equipped with all previous results, we can now take the limit $\eps\rightarrow0$ in the sequence $\phi_\eps$. This further allows us to characterize any adherence point of the set of Hopf-Cole transformations. Again, the technique adapts the methodology of~\cite{barles2009concentration} and references therein. In the following, we work on the compact subset $C_R=[t_0,T]\times [-R,R]$.

\begin{itemize} 
\item \textbf{Step 0:} From Lemma~\ref{lem:Jeps} we have that the integral part of the equation $I_\eps(t)$, after extraction of a subsequence, converges a.e. to a function $I(t)$ as $\eps$ goes to zero.

\item \textbf{Step 1:} so far we have that the sequence $\phi_\eps$ is uniformly bounded and uniformly continuous on $C_R$. The Arzela-Ascoli theorem implies that we can extract a subsequence $\phi_{\eps_n}(t,x)$ that converges locally uniformly to a continuous function $\phi(t,x)$.

\item \textbf{Step 2:}  The limit $\phi(t,x)$ is everywhere non-positive. Indeed, if there was a point $(t^*,x^*)\in[t_0,T]\times[-R,R]$ with $\phi(t^*,x^*)>0$, then there would be a small spatial interval $S$ such that for any $x\in S $ we would have that $\phi(t^*,x)>\delta$, for some $\delta>0$. Thanks to the positivity and mass conservation principle:
$$
1\geq \int_{S} e^{\phi_{\eps_n}(t,y)/\eps_n} dy\geq  |S| e^{\delta/\eps_n},
$$
which is not possible for small values of $\eps_n$. 

Moreover, the maximum of $\phi(t,x)$ is exactly $0$. Indeed, the quadratic upperbound we proved on $\phi_{\eps_n}$ for $R$ large enough,
$$
\phi_{\eps_n}(t,x)\leq -AK^{-1}C(\delta)x^2+C(M,T)=:C_1x^2+C_2,
$$
implies that $\phi_{\eps_n}$ is strictly negative for $R>0$ large enough. This implies that there is a value $R$ such that the limit $\phi(t,x)$ is strictly negative on $\R\setminus[-R,R]$. We thus deduce that
$$
1 = \lim_{n\rightarrow\infty}\int_{-R}^R e^{\phi_{\eps_n}(t,x)/\eps_n}\,dx,
$$
which implies that the maximum value of $\phi(t,x)$ cannot be strictly negative. As a result, the maximum value of $\phi(t,x)$ is necessarily zero.

\item \textbf{Step 3:} Fix some $t\in (0,1)$ and assume that for some $x\in\R$ the limit $\phi(t,x)=-a<0$. From uniform continuity in compacts of $C_R$, we find $\delta>0$ such that if $n$ is large
$$
\phi_{\eps_n}(t',x')\leq -\frac a2<0\text{ for all }(t',x')\in (t-\delta,t+\delta)\times (x-\delta,x+\delta),
$$
so
$$
\int_{x-\delta}^{x+\delta} \mu_{\eps_n}(t',x')\,dx'=\int_{x-\delta}^{x+\delta} e^{\phi_{\eps_n}(t',x')/\eps_n}\,dx'\xrightarrow{n\rightarrow\infty}0,
$$
therefore along converging subsequence the support of the limit $\mu_{\eps_n}(t,x)$ reduces to the set of $x$ such that $\phi(t,x)$ is zero, i.e.
$$
\text{sup}\,\mu(t,\cdot)\subset \{\phi(t,\cdot)=0\},\quad\text{for almost every $t$.}
$$
\item \textbf{Step 4:} For any $n$ we rearrange the terms of the equation~\eqref{eq:hopfcole} to find that $\phi_{\eps_n}$ is a solution to
\begin{multline*}
\eps_n\partial_t\phi_{\eps_n} = -\eps_n\left(\eps_n f'(x) - \alpha'(x)I_{\mu_{\eps_n}}(t) \right)
\\
-  \left(\eps_n f(x)- \alpha(x) I_{\mu_{\eps_n
}}(t)\right)\partial_x\phi_{\eps_n} + \frac{\sigma^2}{2} \vert \partial_x \phi_{\eps_n} \vert^2  + \eps_n\frac {\sigma^2}{2} \partial_{xx}^2 \phi_{\eps_n}.
\end{multline*}
Define $H_\eps\in C(\R\times \R)$ by
\begin{align*}
H_\eps(x,p) =& -\eps\left(\eps f'(x) - \alpha'(x)I_{\mu_{\eps}}(t) \right)
-  \left(\eps f(x)- \alpha(x) I_{\mu_{\eps
}}(t)\right)p+ \frac{\sigma^2}{2} p^2,
\end{align*}
which as $\eps$ vanishes along the sequence $(\eps_n)$, converges locally uniformly toward
$$
H(x,p) = \alpha(x)I(t)\,p+\frac{\sigma^2}{2}p^2,
$$
since $I_{\mu_{\eps_n}}(t)$ converges uniformly toward $I(t)$ on any interval $[0,T]$ with $T>0$. At the same time, $\phi_{\eps_n}$ converges locally uniformly on $(0,+\infty)\times\R$ toward the continuous function $\phi(t,x)$ (i.e. in compact subsets).

Let $\varphi\in C_b^2(\R_+\times\R)$ be a regular test function with $\varphi(t',x')=\phi(t',x')$ for some $(t',x')$, such that $\phi(t,x)\leq\varphi(t,x)$ at a small neighborhood of $(t',x')$. Similarly, for each $\eps_n$, take a test function $\varphi_{n}$, a time $t'_n$ and a point $x_n'$ satisfying the same properties. It holds that
\begin{multline*}
\eps_n\partial_t\varphi_{\eps_n} +  \left(\eps_n f(x)- \alpha(x) I_{\mu_{\eps_n}}(t)\right)\partial_x\varphi_{\eps_n} - \frac{\sigma^2}{2} \vert \partial_x \varphi_{\eps_n} \vert^2  - \eps_n\frac {\sigma^2}{2} \partial_{xx}^2 \varphi_{\eps_n}
\\
\leq -\eps_n\left(\eps_n f'(x) - \alpha'(x)I_{\mu_{\eps_n}}(t) \right),
\end{multline*}
but $I_{\mu_{\eps_n}}(t)\rightarrow I(t)$ a.e. $t\geq0$ as $n\rightarrow\infty$. Using the local uniform convergence, it follows that
$$
-\alpha(x')I(t')\partial_x\varphi(t',x')-\frac{\sigma^2}{2}|\partial_x\varphi(t',x')|^2\leq 0,
$$
therefore $\phi(t,x)$ is a subsolution to the equation
\begin{equation}
\label{eq:limit_eq}
-\alpha(x) I(t)\, \partial_x\phi(t,x)-\frac{\sigma^2}{2}|\partial_x\phi(t,x)|^2= 0.
\end{equation}
By a similar argument, we can prove that $\phi(t,x)$ is also a supersolution to the previous equation, and in consequence $\phi(t,x)$ is a viscosity solution to~\eqref{eq:limit_eq}
\end{itemize}

\end{document}